\documentclass[11pt,reqno]{amsart}
\usepackage[utf8]{inputenc}
\usepackage{latexsym,amssymb,longtable}
\usepackage{hyperref}

\usepackage{amsmath,amsthm,amsxtra}
\usepackage{amsfonts,eucal,mathrsfs}
\usepackage{enumitem}
\usepackage{a4wide}
\usepackage{xcolor}
\usepackage{upgreek}

\numberwithin{equation}{section}

\newcommand{\N}{\mathbb{N}}

\newcommand{\R}{\mathbb{R}}

\newcommand{\eps}{\varepsilon}
\newcommand{\opt}{\mathrm{opt}}

\newcommand{\dd}{\mathrm{d}}

\newtheorem{theorem}{Theorem}[section]

\newtheorem{corollary}[theorem]{Corollary}
\newtheorem{lemma}[theorem]{Lemma}
\newtheorem{proposition}[theorem]{Proposition}

\theoremstyle{definition}
\newtheorem{definition}[theorem]{Definition}
\newtheorem{remark}[theorem]{Remark}

\newtheorem{example}[theorem]{Example}


\let\eps\ep

\newcommand{\foraa}{\text{for a.e.\ }}

\def\dd{\mathrm{d}}
\DeclareMathSymbol{\mtimes}{\mathord}{symbols}{"0A}
\newcommand{\argmin}{\mathop{\rm argmin}}

\newcommand{\DVT}[3]{\DVTn(#1,#2,#3)}

\newcommand{\DVTn}{\Wcostname}
\newcommand{\VarW}[3]{\mathbb A(#1;[#2,#3])}
\newcommand{\VarWd}[3]{\mathbb A_2(#1;[#2,#3])}
\newcommand{\VarWdname}{\mathbb A_2}

\newcommand{\Var}{\mathrm{Var}}
\newcommand{\VarWname}[1]{\mathbb A (#1)}

\newcommand{\BV}{\mathrm{BV}}
\newcommand{\AC}{\mathrm{AC}}

\newcommand{\dvt}{\mathrm{DVT}}

\newcommand{\weakto}{\rightharpoonup}



\newcommand{\piecewiseConstant}[2]{\overline{#1}_{\kern-1pt#2}}

\newcommand{\underpiecewiseConstant}[2]{\underline{#1}_{\kern-1pt#2}}

\newcommand{\Utau}{\xcurve_\tau}
\newcommand{\pwM}[2]{\widetilde{#1}_{\kern-1pt#2}}
\def\gen(#1,#2){\calS_{#1}(#2)}

\newcommand{\teta}{\boldsymbol \vartheta}

\newcommand{\pinfty}{{+\infty}}
\newcommand{\mres}{\kern1pt\mathbin{\vrule height 1.6ex depth 0pt width
    0.13ex\vrule height 0.13ex depth 0pt width 1.3ex}}


\def\calS{\mathscr E}

\newcommand{\restr}[1]{\lower3pt\hbox{$|_{#1}$}}
\newcommand{\nchi}{{\raise.3ex\hbox{$\chi$}}}

\newcommand{\jj}{{\boldsymbol{j}}}


\newcommand{\Xsp}{\boldsymbol{X}}
\newcommand{\xv}{u}
\newcommand{\xcurve}{U}
\newcommand{\Wcostname}{\mathsf{a}}
\newcommand{\Wcostsupname}{\Wcostname_{\rm sup}}
\newcommand{\Wcost}[3]{\Wcostname(#1,#2,#3)}
\newcommand{\Wcostplus}[3]{\Wcostname_+(#1,#2,#3)}
\newcommand{\Wcostminus}[3]{\Wcostname_-(#1,#2,#3)}
\newcommand{\Wcostsup}[2]{\Wcostsupname(#1,#2)}

\newcommand{\tosi}{\stackrel{\sigma}{\rightarrow}}

\newcommand{\down}{\downarrow}
\newcommand{\up}{\uparrow}

\newcommand{\smd}[2]{\mathfrak{a}[#1'](#2)}
\newcommand{\smdn}[1]{\mathfrak{a}[#1']}

\newcommand{\mder}[3]{|{#2}'|_{\mathsf{d}_{#1}}(#3)}

\newcommand{\Wmetric}{\mathsf d_{\Wcostname}}
\newcommand{\Wmetricl}[1]{\mathsf d_{\Wcostname,#1}}

\newcommand{\metr}[3]{d_{#1}(#2,#3)}
\newcommand{\metrname}[1]{d_{#1}}
\newcommand{\Psidens}{\uppsi}
\newcommand{\psidens}[1]{\uppsi_{#1}}

\newcommand{\Gacostname}{\mathsf {b}}
\newcommand{\Gacost}[3]{\Gacostname(#1,#2,#3)}
\newcommand{\tpart}[1]{\overline{\mathsf{t}}_{#1}}
\newcommand{\utpart}[1]{\underline{\mathsf{t}}_{#1}}
\newcommand{\fpart}[2]{\overline{\mathfrak{#1}}_{#2}}
\newcommand{\ufpart}[2]{\underline{\mathfrak{#1}}_{#2}}

\newcommand{\Nei}[3]{\mathrm{U}(#1;#2,#3)}
\newcommand{\VNei}[2]{\mathrm{V}(#1,#2)}

\newcommand{\TuNei}[2]{\mathrm{V}(#1,#2)}

\newcommand{\Wtop}{\mathfrak{O}}
\newcommand{\toW}{\stackrel{\Wcostname}{\rightarrow}}
\newcommand{\llim}[2]{{#1}_-(#2)}
\newcommand{\rlim}[2]{{#1}_+(#2)}
\newcommand{\Tvar}[2]{\mathsf{#1}_{#2}}
\newcommand{\finslname}{\mathcal{R}}
\newcommand{\finsl}[2]{\finslname(#1,#2)}
\newcommand{\actname}{\mathscr{R}}
\newcommand{\act}[2]{\actname(#1,#2)}
\newcommand{\diad}[1]{\mathcal{P}_{#1}}
\newcommand{\did}{\mathrm{diad}}

 \newcommand{\nc}{\normalcolor}
 
 \definecolor{pink}{rgb}{0.7,0,0.6}
\definecolor{lmagenta}{rgb}{0.8,0.4,0.6}
\definecolor{ddcyan}{rgb}{0.4,0.5,1}
\definecolor{violet}{rgb}{0.4,0,0.9}

\usepackage[textsize=footnotesize]{todonotes}

 \def\Xint#1{\mathchoice
{\XXint\displaystyle\textstyle{#1}}%
{\XXint\textstyle\scriptstyle{#1}}%
{\XXint\scriptstyle\scriptscriptstyle{#1}}%
{\XXint\scriptscriptstyle\scriptscriptstyle{#1}}%
\!\int}
\def\XXint#1#2#3{{\setbox0=\hbox{$#1{#2#3}{\int}$ }
\vcenter{\hbox{$#2#3$ }}\kern-.6\wd0}}

\def\dashint{\Xint-}

\numberwithin{equation}{section}

\title[]{Abstract Action Spaces and their topological and dynamic properties}

\author{Riccarda Rossi}
\address{Riccarda Rossi, DIMI, Universit\`a degli studi di Brescia. Via Branze 38, I--25133 Brescia -- Italy}
\email{riccarda.rossi\,@\,unibs.it}

\author{Giuseppe Savar\'e}
\address{Giuseppe Savar\'e, Department of Decision Sciences and BIDSA,
  Bocconi University.
  Via Roentgen 1, I--20136 Milan -- Italy}
\email{giuseppe.savare\,@\,unibocconi.it}


\thanks{R.R.\ acknowledges support from the  PRIN project
\emph{PRIN 2020: ``Mathematics for Industry 4.0"}. 
 G.S.\ acknowledges support from the  PRIN project \emph{``PRIN 202244A7YL: Gradient Flows and Non-Smooth Geometric Structures with Applications to Optimization and Machine Learning”.}}

\begin{document}

\maketitle

\begin{abstract}
 We introduce the concept of \emph{action space}, a set $\Xsp$ endowed with an action cost
$\mathsf a:(0,+\infty)\times \Xsp\times \Xsp\to [0,+\infty)$
satisfying suitable axioms, which turn out to  provide  a `dynamic' generalization of 
the classical notion of metric space.
Action costs naturally arise as dissipation terms featuring in the Minimizing Movement scheme for gradient flows, which can then be settled in general action spaces. 
\par
As in the case of metric spaces,
we will show that action costs induce an intrinsic topological and metric structure on $\Xsp$. Moreover, we introduce the related action functional on paths in $\Xsp$, investigate the properties  of curves of finite action, and discuss their absolute continuity. Finally,  under a condition akin to the \emph{approximate mid-point property} for metric spaces, we provide a dynamic interpretation of action costs. 
\end{abstract}

\begin{center}
{\sl Dedicated to Pierluigi Colli on the occasion of his 65th birthday}
\end{center}

%

\medskip


\section{Introduction}
\noindent
Since the pioneering work by \textsc{E.\ De Giorgi} \cite{DG93}, which was inspired the approach by \textsc{Almgren}, \textsc{Taylor} and \textsc{Wang}
 to the mean curvature and other geometric
flows,
Minimizing Movements have become a paradigmatic tool
for constructing solutions to  a large class of evolutionary problems. 

In  its  full generality, 
the Minimizing Movement scheme consists in finding, for a given
time interval $[0,T]$ and a time step   $\tau>0$
inducing the uniform grid
$0<\tau<2\tau<\cdots<N_\tau \tau$,
with $N_\tau\in \N$ such that $(N_\tau-1)\tau<T\le N_\tau \tau$,
discrete solutions  $(\Utau^n)_{n=0}^{N_\tau}$
in some topological space $\Xsp$, 
as solutions of the following recursive family of 
minimum problems
\begin{equation}
\label{MM_general} \Utau^n \in \argmin_{V \in \Xsp} 
\mathscr F(\tau, n, U^{n-1}_\tau;V), \qquad
\text{$n=1, \ldots, {N_\tau},$}
\end{equation}
with $\Utau^0=u_0$ a given initial
datum,
where $\mathscr F:(0,+\infty)\times\N\times \Xsp\times \Xsp\to \R\cup\{+\infty\}$
is a suitable functional.
Minimizing Movements are limits
as $\tau\down0$ of the piecewise
constant interpolations $U_\tau$
of the values $U^n_\tau$,  viz.\ 
$U_\tau: [0,T]\to \Xsp$, 
$U_\tau(t):=U_\tau^n$
if $t\in ((n-1)\tau,n\tau].$

A   particularly significant  example
arises in the case
of gradient flows
of a  time-dependent functional
$\mathscr{E} :[0,T]\times \Xsp\to\R\cup\{+\infty\}$, 
with respect to a metric $d$
on $\Xsp$:  the related Minimizing Movement scheme
 corresponds
to a functional $\mathscr F$ of the form
%
%
%
%
\begin{equation}
\label{MM_GFLOW} 
\mathscr F(\tau,n,U;V):=
\frac1{2\tau}d^2( U,V) +\mathscr E(n\tau,V).
\end{equation}
\nc 
Under suitable assumptions on $\calS$, the approximate solutions defined by interpolation
of the discrete values $(\Utau^n)_{n=0}^{N_\tau}$  converge to 
a \emph{curve of maximal slope}  \cite{Ambr95MM, AGS08, RMS08}. 
\par
In the applications, gradient flows model processes whose temporal evolution results from the trade-off
of energy conservation and energy dissipation. Dissipative mechanisms are then encoded in the metric $d$. In
the general case, dissipation may be mathematically modelled by functionals of the type $\uppsi(d)$, with 
$\Psidens :[0,+\infty)\to [0,+\infty)$ a convex function null at $0$ 
(the gradient-flow case corresponding to $\Psidens(r) = \tfrac12 r^2$). 
The corresponding
functional generating the
Minimizing Movement scheme, i.e.
%
%
\begin{equation}
\label{MM_DNE} 
\mathscr F(\tau,n,U;V):=
\tau \uppsi  \left( \frac{d(U,V)}\tau \right)  +\mathscr E(n\tau,V)\,,
\end{equation}
 for a general  (convex)  $\uppsi$ with \emph{superlinear growth}  at infinity,
 has been tackled in  \cite{RMS08}
 (cf.\ also \cite{Zimmer-metric}).
The linear-growth case $\Psidens(r)=r$ falls into the realm of rate-independent evolution
\cite{MaiMie05EREM,MieRouBOOK}.
\par
It is then natural
(see \cite{PRST22}) to study 
more general 
MM-functionals $\mathscr F$
of the form
\begin{equation}
\label{eq:time-incremental-intro} \mathscr F(\tau,n,U;V):=
\Wcostname(\tau, U,V) +\mathscr E(n\tau , V),
\end{equation}
where
$\Wcostname: (0,+\infty)\times \Xsp\times \Xsp \to [0,+\infty)$ 
can be interpreted as a sort of action functional, measuring the 
cost  for moving from  the point $U$ to the point $V$ 
in the amount of time $\tau>0$.  The structure
\eqref{eq:time-incremental-intro}
still preserves the natural splitting between 
a  driving   energy functional $\mathscr E$
and a metric-like   dissipation 
functional $\Wcostname$,
which however is not derived from a   given  metric $d$ on $\Xsp$.

The structural property 
which takes into account the heuristic interpretation of $\Wcostname$
is the 
\textbf{concatenation inequality}
\begin{equation}\label{e:psi3-intro}
 \Wcostname(\tau_1+\tau_2,\xv_1,\xv_3) \leq  \Wcostname(\tau_1,\xv_1,\xv_2) +  \Wcostname(\tau_2,\xv_2,\xv_3)
 \quad u_i\in \Xsp,\ \tau_i>0,
\end{equation}
which may also be interpreted as a
dynamic version of the triangle inequality  for  a metric.
Still inspired by the axioms of 
metrics, we will focus on 
actions that 
\textbf{vanish only on the diagonal of $\Xsp\times \Xsp$}
and are \textbf{symmetric}, i.e.
%
for all $\tau>0,\, \xv_0,\, \xv_1 \in \Xsp$,	\begin{equation}\label{e:psi2-intro}
\Wcostname(\tau,\xv_0,\xv_1)\ge0;\qquad	\Wcostname(\tau,\xv_0,\xv_1)= 0 \ \Leftrightarrow \ \xv_0=\xv_1,
  \end{equation}
 \begin{equation}
\label{e:psi1-intro}
\Wcost{\tau}{\xv_1}{\xv_2} = \Wcost{\tau}{\xv_2}{\xv_1}.
\end{equation}
 	%
%
Hereafter, we will term  \emph{action cost}  any function 
complying with \eqref{e:psi3-intro},  \eqref{e:psi2-intro}, \&
\eqref{e:psi1-intro}
and \emph{action space} 
a pair $(\Xsp,\Wcostname)$ given by a set
$\Xsp$ and an   action cost  $\Wcostname$.
Under further compatibility conditions between $\Wcostname$ and the driving energy functional 
$\calS$, in \cite{PRST22} it has been shown that the approximate solutions arising from the Minimizing Movement scheme generated by
 the functional 
$\mathscr{F}$ from  \eqref{eq:time-incremental-intro} converge to a curve fulfilling a suitable Energy-Dissipation (in)equality that in fact generalizes the metric formulation of gradient flows.
In particular, the Minimizing Movement scheme \eqref{eq:time-incremental-intro} can be set up in the general framework
of action spaces, in which 
$\Wcostname$ is not induced by an underlying metric on the ambient space $\Xsp$. 


It is easy to check that properties 
\eqref{e:psi3-intro},  \eqref{e:psi2-intro}, \&
\eqref{e:psi1-intro} 
are satisfied by all functions of the form
 $\Wcostname(\tau, u,v) =\tau \Psidens \left( \tfrac1\tau d(u,v)\right)$,
 where $\Psidens:[0,+\infty)\to[0,+\infty)$
 is a convex function vanishing only at $0.$
 The class of   action costs,  however, 
 is much larger and includes 
 diverse  functionals.
 Paradigmatic examples
 are provided by 
  costs   arising from  the minimization of a suitable action functional:   when $\Xsp=\R^d$ we can define
 \begin{equation}
 \label{Finsler-cost-intro}
 \Wcostname(\tau,u,v): = \inf \left\{ \int_0^{\tau} \finsl{\Theta(r)}{\Theta'(r)} \dd r \, : \ \Theta \in \AC([0,\tau];\R^d)\, \ \Theta(0)=u, \ \Theta(\tau)= v  \right\},
 \end{equation} 
 with $\finslname: \R^d\times \R^d \to [0,+\infty)$ such that $\finsl{\Theta}{\cdot}$ is convex, superlinear 
 and vanishes only at $0$
 (cf.\ Sec.\  \ref{s:example} for all details).
 We emphasize that, while Riemannian-Finsler metrics  are defined by infimizing an action integral which is positively $1$-homogeneous with respect to the velocity  variable, 
   here we allow for general convex integrands. 
 \par
 The other main motivating example is provided by the so-called \emph{Dynamical-Variational Transport} ($\dvt$) costs, which generalize transport distances between measures. 
 Their definition in \cite{PRST22}
has been indeed inspired by the well-known dynamic reformulation of the Wasserstein distance $W_2$ advanced by \textsc{Benamou} \& \textsc{Brenier} \cite{Benamou-Brenier}, cf.\ also \cite{DolbeaultNazaretSavare09,Maas11,Mielke13CALCVAR}.
 Another notable  and inspiring construction for the present paper has been proposed in
\cite{Figalli-Gangbo-Yolcu11}, 
where the authors extended the Minimizing Movement scheme, and \textsc{De Giorgi}'s interpolation techniques
\cite{DG93,Ambr95MM},
 to carry out a variational analysis of PDEs that are not gradient flows but, still, possess an entropy functional and an underlying Lagrangian.
 It is in terms of this Lagrangian, which also depends on the spatial variable, that they defined an action integral and, ultimately, an action cost 
 for the Minimizing Movement scheme.
\par
   Now, in the same way as the  `action integral cost' 
    $\Wcostname$ \eqref{Finsler-cost-intro} generalizes the standard construction of  Riemann-Finsler metrics, 
    so do $\dvt$ costs extend transport distances  between measures. In fact, given two positive finite measures $\mu_0,\mu_1$ on $\R^d$ (for simplicity; more general state spaces 
    have been considered in \cite{PRST22}), 
   the cost for connecting them over a certain interval $[0,\tau]$ is defined by 
  minimizing a suitable action integral over curves of measures joining $\mu_0$ to $\mu_1$ and solving the continuity equation on $(0,\tau)$ with flux $\jj$:
   \begin{equation}
   \label{dvt-intro}
   \Wcost \tau{\mu_0}{\mu_1} : = \inf\left \{ \int_0^\tau \act{\rho(r)}{\jj(r)} \dd r\, : \ (\rho,\jj) \in \mathcal{CE}(0,\tau), \  \rho(0)=\mu_0, \ \rho(1) = \mu_1\right\}
   \end{equation}
    (where $ \mathcal{CE}(0,\tau)$  denotes the family of solutions to the continuity equation on $(0,\tau)$). 
   \par
    Because of the flexibility and frequent occurrence of  action structures in the variational approach to evolutionary problems, 
     we believe that action spaces
     deserve to be studied in their own right: 
     like in the case of metric spaces,
     we will show that 
     they induce a natural (metrizable) topology,
     an intrinsic notion of completeness, and  canonical action functionals
     on $\Xsp$-valued paths.
  \subsection*{Plan of the paper}  
In this note we develop a systematic analysis
of a space $\Xsp$  endowed with   an action cost  $\Wcostname$. 
We introduce the main definitions
with relevant examples in Section 2.
In Section \ref{s:3} we will show that 
an action cost  $\Wcostname$   induces 
a canonical topology
$\Wtop$
on $\Xsp$, cf.\ Proposition 
\ref{prop:topology}. 
In fact, it even generates a \emph{uniform structure} on $\Xsp$, namely a topological structure (whose precise definition is postponed to Proposition \ref{prop:uniform-structure}  ahead) 
by means of which it is possible to render the concept that two points in $\Xsp$ are `close'
and to define an intrinsic notion of completeness. 
Since this uniform structure has a countable base, the associated topology
$\Wtop$
is metrizable. Indeed, 
in Section \ref{sec:metric} with Theorem \ref{thm:metrizability} we explicitly provide a family of 
equivalent metrics  $\Wmetricl\lambda$ 
metrizing the topology $\Wtop$
and inducing the uniform structure.
\par
As shown in \cite{PRST22}, the Minimizing Movement scheme \eqref{eq:time-incremental-intro} leads to limiting curves $u$ with finite 
$\Wcostname$-action on $[0,T]$, i.e.\ such that 
\[
\VarW u 0T: = \sup \left \{ \sum_{j=1}^M  
\Wcost{t^j - t^{j-1}}{u(t^{j-1})}{u(t^j)} \, : \ (t^j)_{j=0}^M \in \mathscr{P}_f([0,T])   \right\}<+\infty
\]
 (where $\mathscr{P}_f([0,T]) $ denotes the set of all partitions of $[0,T]$). 
In Section \ref{s:4} we focus on these curves and show that they indeed have $\BV$-like properties. In particular, 
when $(\Xsp,\Wcostname)$ is complete 
they are \emph{regulated} in the $\Wtop$-topology, hence their jump set is well defined. 
We then turn to \emph{$\Wcostname$-absolutely continuous} curves $u: [0,T]\to \Xsp$
in Section \ref{s:5}, for which an \emph{action density} $\smdn u{}$  can be introduced, fulfilling 
\[
\DVT{t{-}s}{u(s)}{u(t)} \leq \int_s^t \smd ur \dd r \qquad \text{for all }   0\leq s \leq t \leq T 
\]
and
\[
\VarW u 0T = \int_0^T \smd ur \dd r \,.
\]
In Section \ref{s:6} we provide a sufficient condition on the metric cost
ensuring that all finite-action curves on $[0,T]$  are in fact $\Wcostname$-absolutely continuous on $[0,T]$, cf.\ Theorem \ref{thm:AC1}.
Finally, in Section \ref{s:7} we  demonstrate that, under an additional condition on $\Wcostname$ which amounts to the existence of `approximate mid-points', a dynamic characterization for $\Wcostname$ is available, cf.\  Thm.\ \ref{thm:dynamic},  Thm.\ \ref{thm:optimality-BV} and Corollary \ref{cor:7.5}. In this way, we somehow `close the circle' by providing, for general costs $\Wcostname $, a \emph{dynamic interpretation} akin to \eqref{Finsler-cost-intro} for  action integral costs,  and to \eqref{dvt-intro} for Dynamical-Variational Transport costs. 
\medskip

\noindent
{\sl This paper is dedicated to Pierluigi Colli: it is a privilege  for us
to have him as a valuable  colleague and, more importantly, as  a loyal friend.}

%
%
%
%
\medskip

\noindent 

\section{Action spaces}
\label{s:example}
In this section we introduce
the main definitions we will deal with
and we show some important examples.
\begin{definition}[Action cost] 
\label{ass-W}
We say that a function
$
\Wcostname : (0,+\infty)\times \Xsp \times \Xsp \to [0,+\infty)
$
is an   action cost   on the set $\Xsp$ if it
satisfies the following  properties:
\begin{subequations}\label{assW}
\begin{enumerate}[label={(\arabic*)}]
	\item \label{tpc:1} \textbf{Strict positivity off the diagonal:} For all $\tau>0,\, \xv_0,\, \xv_1 \in \Xsp$,
	\begin{equation}\label{e:psi2}
		\Wcostname(\tau,\xv_0,\xv_1)= 0 \ \Leftrightarrow \ \xv_0=\xv_1.
		\end{equation}
  \item\label{tpc:1bis} \textbf{Symmetry:}
  For every $\tau>0$, $u_1,u_2\in \Xsp$
  \begin{equation}
\label{symmetry}
\Wcost{\tau}{\xv_1}{\xv_2} = \Wcost{\tau}{\xv_2}{\xv_1} \qquad \text{for all } \tau \in (0,+\infty), \, \xv_1,\, \xv_2 \in \Xsp\,. 
\end{equation}
	\item \label{tpc:2} \textbf{Concatenation inequality:} For all 
 $\xv_1,\, \xv_2,\,\xv_3\in\Xsp$  and   $\tau_{1}, \tau_{2}  \in (0,\pinfty)$
 	\begin{equation}\label{e:psi3}
 \Wcostname(\tau_{1}+\tau_{2}, \xv_1,\xv_3) \leq  \Wcostname(\tau_1,\xv_1,\xv_2) +  \Wcostname(\tau_2,\xv_2,\xv_3).
 	\end{equation}
\end{enumerate}
\end{subequations}
We call \emph{action space} a pair $(\Xsp,\Wcostname)$  consisting
of a set $\Xsp$ endowed with an action cost  $\Wcostname$.
\end{definition}
As  mentioned in the Introduction, 
$\Wcostname(\tau,u_0,u_1)$
represents the cost to reach $u_1$
from $u_0$ in the amount of time $\tau>0$.
\par
A first important
consequence of \eqref{e:psi2} and 
\eqref{e:psi3} is the monotonicity property
w.r.t.~$\tau$:
\begin{equation}
    \label{e:decreasing}
    0<\tau'<\tau''\quad\Rightarrow\quad
    \Wcostname(\tau',u_0,u_1)
    \ge \Wcostname(\tau'',u_0,u_1)\quad
    \text{for every }u_0,u_1\in \Xsp.
\end{equation}
In order to check \eqref{e:decreasing}
it is sufficient to notice that 
\begin{equation*}
   \Wcostname(\tau'',u_0,u_1)
   \le 
   \Wcostname(\tau',u_0,u_1)+
   \Wcostname(\tau''-\tau',u_1,u_1)
   =\Wcostname(\tau',u_0,u_1),
\end{equation*}
so that 
the map $\tau\mapsto \Wcostname(\tau,u_0,u_1)$
is decreasing.   Estimate 
\eqref{e:decreasing} renders the intuitive property that  the `cost for connecting' $u_0$ and $u_1$ decreases if
they are joined over  
 a longer time interval,
 and it is a consequence of 
 the positivity of $\Wcostname$
 and the fact that ``staying'' 
 at the same point is costless.
In particular, we can define
\begin{subequations}
    \label{eq:plusminus}
\begin{align}
    \Wcostplus \tau{u_0}{u_1}:={}&
    \inf_{\tau'<\tau}
    \Wcost{\tau'}{u_0}{u_1}
    =
    \lim_{\tau'\uparrow\tau} 
    \Wcost{\tau''}{u_0}{u_1},\\
    \Wcostminus \tau{u_0}{u_1}:={}&
    \sup_{\tau''>\tau}
    \Wcost{\tau''}{u_0}{u_1}
    =
    \lim_{\tau''\downarrow\tau} 
    \Wcost{\tau''}{u_0}{u_1},
    \end{align}
    \end{subequations}
    observing that 
    \[\Wcostminus\tau{u_0}{u_1}\le 
    \Wcost\tau{u_0}{u_1}\le 
    \Wcostplus\tau{u_0}{u_1}\quad\text{for every }\tau>0,\ u_0,u_1\in \Xsp.\] 
    It is easy to check that 
    \begin{proposition}
        \label{prop:maybe-useful}
        If $\Wcostname$ 
        is an action cost on $\Xsp$
        then the functions
        $\mathsf a_-$
        and $\mathsf a_+$
        defined by {\rm (\ref{eq:plusminus}a,b)}
        are action costs as well.
    \end{proposition}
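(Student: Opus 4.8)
The plan is to verify, separately for $\mathsf a_+$ and $\mathsf a_-$, the three defining properties \eqref{e:psi2}, \eqref{symmetry}, \eqref{e:psi3} of an action cost, exploiting throughout the monotonicity \eqref{e:decreasing} of $\tau \mapsto \Wcost{\tau}{u_0}{u_1}$, which is precisely what guarantees that the one-sided limits in (\ref{eq:plusminus}a,b) exist. Before touching the axioms I would record that both functions are genuinely $[0,+\infty)$-valued: nonnegativity is immediate from $\Wcost{\tau'}{u_0}{u_1}\ge 0$, while finiteness follows from the sandwich $\Wcostminus\tau{u_0}{u_1} \le \Wcost\tau{u_0}{u_1} \le \Wcostplus\tau{u_0}{u_1}$ together with the bound $\Wcostplus\tau{u_0}{u_1} \le \Wcost{\tau'}{u_0}{u_1} < +\infty$, obtained by fixing any $\tau' \in (0,\tau)$.

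The two ``metric-like'' axioms are essentially inherited. Symmetry \eqref{symmetry} passes to the infimum/supremum (equivalently, to the limit) in (\ref{eq:plusminus}a,b), because $\Wcost{\tau'}{u_1}{u_2} = \Wcost{\tau'}{u_2}{u_1}$ for every $\tau'$. For the strict positivity \eqref{e:psi2}: if $u_0 = u_1$ then $\Wcost{\tau'}{u_0}{u_0} = 0$ for all $\tau'$, whence $\mathsf a_\pm(\tau,u_0,u_0) = 0$; conversely, using the sandwich one sees that $\Wcostplus\tau{u_0}{u_1} = 0$ forces $\Wcost\tau{u_0}{u_1} = 0$ and hence $u_0 = u_1$, while $\Wcostminus\tau{u_0}{u_1} = 0$ forces $\Wcost{\tau''}{u_0}{u_1} = 0$ for every $\tau'' > \tau$, again yielding $u_0 = u_1$ through \eqref{e:psi2} applied to $\mathsf a$.

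The only substantive point is the concatenation inequality \eqref{e:psi3}, which I would obtain by a monotone passage to the limit. Fix $u_1,u_2,u_3$ and $\tau_1,\tau_2 > 0$. For $\mathsf a_+$, pick $s_i < \tau_i$; applying \eqref{e:psi3} to the times $s_1,s_2$ gives $\Wcost{s_1+s_2}{u_1}{u_3} \le \Wcost{s_1}{u_1}{u_2} + \Wcost{s_2}{u_2}{u_3}$. Letting $s_1 \uparrow \tau_1$ and $s_2 \uparrow \tau_2$, the right-hand side converges to $\Wcostplus{\tau_1}{u_1}{u_2} + \Wcostplus{\tau_2}{u_2}{u_3}$, whereas $s_1 + s_2 \uparrow \tau_1 + \tau_2$ from below, so the left-hand side converges to the left limit $\Wcostplus{\tau_1+\tau_2}{u_1}{u_3}$; this is exactly \eqref{e:psi3} for $\mathsf a_+$. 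The argument for $\mathsf a_-$ is symmetric: now choose $s_i > \tau_i$, apply \eqref{e:psi3}, and let $s_i \downarrow \tau_i$, so that $s_1 + s_2 \downarrow \tau_1 + \tau_2$ from above and each term converges to its right limit, giving \eqref{e:psi3} for $\mathsf a_-$.

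I expect the main (and only modest) obstacle to lie in the bookkeeping of these limits: one must identify the one-sided limit of $\mathsf a$ at the combined time $\tau_1 + \tau_2$ with $\mathsf a_\pm$ evaluated there — which is legitimate precisely because $s_1 + s_2$ approaches $\tau_1 + \tau_2$ monotonically from the correct side, so a cofinal sequence of the defining net is selected — and one invokes the finiteness established in the first step to split the limit of the sum on the right-hand side additively, avoiding any $\infty-\infty$ ambiguity.
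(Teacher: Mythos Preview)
Your proof is correct and follows essentially the same approach as the paper's. The paper checks only the concatenation inequality (leaving the other axioms as implicit), and does so by applying \eqref{e:psi3} at times $\tau_i-\eps$ and letting $\eps\down0$; your two-parameter version with $s_i\uparrow\tau_i$ (resp.\ $s_i\downarrow\tau_i$) is the same idea, and you are slightly more thorough in verifying finiteness, symmetry, and strict positivity explicitly.
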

    \begin{proof}
        Let us just check the concatenation inequality for $\mathsf a_+$, as the corresponding property for $\mathsf a_-$ follows by a similar argument.
        For $\tau_1,\tau_2>0$,
        $u_1,u_2,u_3\in \Xsp$,
        $\eps\in (0,\tau_1\land \tau_2)$ we have
        \begin{align*}
            \Wcost{\tau_1+\tau_2-2\eps}{u_1}{u_3}&\le 
            \Wcost{\tau_1-\eps}{u_1}{u_2}
            +
            \Wcost{\tau_2-\eps}{u_2}{u_3}.
        \end{align*}
        Passing to the limit as $\eps\down0$
        we obtain
        \begin{align*}
            \Wcostplus{\tau_1+\tau_2}{u_1}{u_3}&\le 
            \Wcostplus{\tau_1}{u_1}{u_2}
            +
            \Wcostplus{\tau_2}{u_2}{u_3}.
            \qedhere
        \end{align*}
    \end{proof}
\noindent    We also set
\begin{equation}
\label{a-sup}
    \Wcostsup{u_0}{u_1}:=
    \sup_{\tau>0}
    \Wcost\tau{u_0}{u_1}
    =
    \lim_{\tau\downarrow0}
    \Wcost\tau{u_0}{u_1}
    \in [0,+\infty]\,.
\end{equation}
\begin{definition}[Continuity and superlinearity]
\label{def:superlinearity}
We say that the  action cost 
\begin{itemize}
\item[-] $\Wcostname$ is \emph{continuous}
if for every $u_0,u_1\in \Xsp$ the map
$\tau\mapsto \Wcost \tau{u_0}{u_1}$ is continuous.
Equivalently, if
\begin{equation}
\label{e:continuity1}
    \Wcostplus\tau{u_0}{u_1}=
    \Wcostminus\tau{u_0}{u_1}
    \quad\text{for every }\tau>0,\ u_0,u_1\in \Xsp;
\end{equation}
\item[-] $\Wcostname$  is
\emph{metric-like} if
$\Wcostsup{u_0}{u_1}<+\infty$
for every $u_0,u_1\in \Xsp$;
\item[-]  $\Wcostname$
has a \emph{local superlinear growth}
if 
\begin{equation}
    \label{e:superlinear}
    \Wcostsup {u_0}{u_1}=+\infty
    \quad\text{for every }u_0,u_1\in\Xsp,\ 
    u_0\neq u_1.
\end{equation}
\end{itemize}
\end{definition}
\noindent
It is immediate to check that 
if $\Wcostname$ is metric-like
then the function
$\Wcostsupname$ 
is a metric in $\Xsp$.
 In turn, we refer to \eqref{e:continuity1}
as a continuity property on the grounds of Proposition \ref{prop:A-continuity} ahead. In what follows, 
we will mostly focus on the superlinear case \eqref{e:superlinear}.

\subsection{Examples}
\label{ss:2.1}
 We illustrate the above definitions in some examples. 
\begin{example}[Metrics]
\label{ex:trivial}
    If $d$ is a metric on $\Xsp$
    then the $\tau$-independent  cost 
    $\Wcost\tau uv:=d(u,v)$
    is an   action cost. 
    In fact,  an action cost 
    $\Wcostname$ is $\tau$-independent
    if and only if it is a metric on $\Xsp$.
\end{example}
\begin{example}[Rescaling]
    If $\mathsf b$ is an action cost
    on $\Xsp$ and $\lambda,\theta>0$
    then also the rescaled function
    \begin{equation}
        \Wcost \tau uv:=
        \theta\mathsf b({\tau/\lambda},u,v)
    \end{equation}
    is an action cost.
\end{example}
\begin{example}[The convex construction]
\label{ex:psi-construction}
\upshape
If $\Gacostname$ is  an action cost  on $\Xsp$ and
$ \Psidens: [0,+\infty) \to [0,+\infty) $ 
is a convex function with $0=\Psidens (0)<\Psidens(a)$ for every $a>0$, then
\begin{equation}
    \label{eq:psi-construction}
    \Wcost \tau uv:=
    \tau\Psidens\Big(
    \tau^{-1}\Gacost \tau uv\Big)
    \quad\text{is  an action cost.}
\end{equation}
It is immediate to check strict positivity and symmetry. 
The concatenation property
follows by the 
convexity and the monotonicity of $\Psidens$:
for every $\tau_i>0$, 
$i=1,2$,
with $\tau:=\tau_1+\tau_2$ and $\alpha_i:=\tau_i/\tau$,
we have
\begin{align*}
    \Wcostname(\tau,u,w)&=
    \tau \Psidens\Big(
    \tau^{-1}
    \Gacost{\tau_1+\tau_2}uw\Big)
    \\&\le 
    \tau \Psidens\Big(
    \tau^{-1}
    \big(\Gacost{\tau_1}uv {+} \Gacost{\tau_2}vw  \big)\Big)
    \\&=
    \tau \Psidens\Big(
    \alpha_1\tau_1^{-1}
    \Gacost{\tau_1}uv+
    \alpha_2\tau_2^{-1}\Gacost{\tau_2}vw\big)\Big)
    \\&\le 
    \alpha_1\tau \Psidens\Big(
    \tau_1^{-1}
    \Gacost{\tau_1}uv\Big)
    +
    \alpha_2\tau
    \Psidens\Big(\tau_2^{-1}\Gacost{\tau_2}vw\big)\Big)
    \\&= \Wcostname(\tau_1,u,v)+\Wcostname(\tau_2,v,w).
\end{align*}
\end{example}
\begin{example}[Action cost induced by a metric]
\label{ex:psi-metric}
    Recalling the metric case of Example 
\ref{ex:trivial}, 
 as a particular case of the construction set up in Ex.\ \ref{ex:psi-construction} 
we  get that  the functional 
\begin{equation} 
\label{eq:psi-example}
\text{$\Wcost \tau uv= \tau \Psidens \left( \frac{d(u,v)}{\tau}\right)$, for a given metric $d$, on $\Xsp$
is  an  action cost.} 
\end{equation}
Concerning the properties stated in 
Definition \ref{def:superlinearity}, we 
immediately see that the  action cost 
defined by \eqref{eq:psi-example} is continuous;
moreover, setting
\begin{equation}
    \label{eq:recession}
    \Psidens_\infty':=
    \lim_{r\to+\infty}\frac{\Psidens(r)}r
    =
    \sup_{r>0}\frac{\Psidens(r)-\Psidens(r_0)}{r-r_0}
    \in (0,+\infty]
\end{equation}
we have two cases:
\begin{enumerate}
\item If $\Psidens_\infty'<+\infty$,
then $\Wcostname $ is metric-like;
\item If  $\Psidens_\infty'=+\infty$ 
(i.e.~$\Psidens$ has superlinear growth at infinity), then  $\Wcostname$
has a local superlinear growth.
\end{enumerate}
\end{example}
\begin{example}[Linear combination of  action costs] 
\label{ex:2.8}
    It is immediate to check that 
    if $\Wcostname_i$, $i$ running in a finite set 
    $ \mathcal I$,
    are   action costs  on $\Xsp$
    and $\theta_i>0$ are positive real numbers, 
    then also
    $\Wcostname:=\sum_i \theta_i\Wcostname_i$
    is an  action cost. 

    In particular,
    if $\Xsp$ is endowed with two 
    metrics $\metrname{1}$ and $\metrname{2}$ 
we may consider the  action cost 
\begin{equation}
\label{structure-W-1+2}
\Wcost \tau uw = \tau \psidens{1}\left( \frac{\metr{1}uw}\tau \right)+ \tau \psidens{2}\left( \frac{\metr{2}uw}\tau \right) 
\end{equation}
where $\psidens i$ are as in Example
\ref{ex:psi-metric}.
%
  We mention that
 an action cost  induced by two metrics as in  \eqref{structure-W-1+2}
 occurs in the Minimizing-Movement scheme
for the generalized gradient system $(\Xsp, \metrname 1, \psidens 1, \metrname 2, \psidens 2)$ providing the 
 vanishing-viscosity approximation of the  rate-independent system $ (\Xsp, \metrname 1, \psidens 1)$,  cf.\ e.g.\ \cite{MRS09,MRS13}.
\end{example}
\begin{example}[Concave compositions]
\label{ex:concave}
    Let $h:(0,+\infty)\times [0,+\infty)^{I}\to [0,+\infty)$
    be a concave function 
    such that $h(\tau,0)=0$ 
    for every $\tau>0$ 
    and $h(\tau,\boldsymbol a)>0$
    for every $\tau>0,\boldsymbol a\neq 0.$
    If $\Wcostname_i$, $i=1,\cdots, I,$
    are  action costs  on $\Xsp$ then also
    \begin{equation}
        \label{eq:concave-construction}
        \Wcostname(\tau,u,v) :=h(\tau,\Wcostname_1(\tau,u,v),\cdots,\Wcostname_I(\tau,u,v))\quad
        \tau>0,\ u,v\in \Xsp
    \end{equation}
    is an  action cost.
    We just check the concatenation property,
    by using the facts that 
    \[0\le a_i\le a_i',\ i=1,\cdots,I\quad\Rightarrow\quad
    h(\tau, a_1,\cdots, a_I)\le h(\tau, a_1',\cdots, a_I')\]
    \[h(\tau'+\tau'',a_1+b_1,\cdots,a_I+b_I)\le 
    h(\tau',a_1,\cdots,a_I)+h(\tau'',b_1,\cdots,b_I).\]
    If $\tau=\tau_1+\tau_2$ we have
    \begin{align*}
        \Wcost\tau uw&=
        h(\tau,\Wcostname_1(\tau,u,w),
        \cdots,\Wcostname_I(\tau,u,w))
        \\&\le 
        h(\tau_1+\tau_2,
        \Wcostname_1(\tau_1,u,v)+
        \Wcostname_1(\tau_2,v,w),
        \cdots,\Wcostname_I(\tau_1,u,v)+
        \Wcostname_I(\tau_2,v,w))
        \\&\le 
        h(\tau_1,\Wcostname_1(\tau_1,u,v),
        \cdots,\Wcostname_I(\tau_1,u,v))
        +
        h(\tau_2,\Wcostname_1(\tau_2,v,w),
        \cdots,
        \Wcostname_I(\tau_2,v,w))
        \\&=
        \Wcost{\tau_1}uv+
        \Wcost{\tau_2}vw.
    \end{align*}
\end{example}
\begin{example}[Supremum of a directed family]
\label{ex:supremum-directed}
    Let $\mathcal A$ 
    be a directed family of 
    actions costs on $\Xsp$, i.e.~
    for every $\mathsf a_1,\mathsf a_2\in \mathcal A$ there exists $\mathsf a\in \mathcal A$ such that 
    $\mathsf a_1\lor \mathsf a_2\le \mathsf a.$
    If 
    \begin{equation}
        \bar{\mathsf a}:=\sup_{\mathsf a\in \mathcal A}\mathsf a
    \end{equation}
    is finite in $(0,+\infty)\times \Xsp\times \Xsp$, then $\bar{\mathsf a}$ is an action cost as well.
\end{example}
\begin{example}[Supremum of truncated metrics]
\label{ex:supremum-metrics} 
    Let $\mathsf d_\lambda$, $\lambda>0$, 
    be a family of
    metrics on $\Xsp$, increasing w.r.t.~$\lambda$,
    such that $
    \sup_{\lambda>0}\mathsf d_\lambda(u,v)<\infty $
    for every $u,v\in \Xsp.$
    Then
    \begin{equation}
    \label{eq:general}
    \Wcost \tau uv:=\sup_{\lambda>0}
    \mathsf d_\lambda(u,v)\land \lambda \tau
    \end{equation}
    is an action cost.
    In fact, each term $\mathsf a_\lambda:=
    \mathsf d_\lambda\land \lambda \tau$
    is an action cost, thanks 
    Examples \ref{ex:trivial} and 
    \ref{ex:concave}. Moreover, the
    set $\mathcal A:=\{\mathsf a_\lambda\}_{\lambda>0}$
    is obviously directed: 
    by Example \ref{ex:supremum-directed}
    it is sufficient to prove that 
    $\mathsf a$ is finite.
\end{example}
 \subsection{Action integral costs}
 \label{ss:Finsler}
 Let $\Xsp$ be, in addition, a separable and reflexive Banach space with norm $\|\cdot\|$, and let us consider an \emph{integrand} $\mathcal{R}: \Xsp\times\Xsp\to [0,+\infty)$ 
  which is bounded on bounded sets and fulfils 
  the following properties:
 \begin{itemize}
     \item[$(\mathcal{R}_1)$]  for all 
     $(\teta_n)_n,\, (\zeta_n)_n \subset \Xsp$
     \begin{equation}
         \label{R1-prop}
         \left.
         \begin{array}{ccc}
              \teta_n &  \weakto  & \teta,
              \\
                \zeta_n &  \weakto  & \zeta,
         \end{array}
         \right\}
         \ \Rightarrow \ \liminf_{n\to\infty}
         \mathcal{R}(\teta_n,\zeta_n) \geq \mathcal{R}(\teta,\zeta);
     \end{equation}
       \item[$(\mathcal{R}_2)$] 
       for all $\teta\in \Xsp$ the functional $\mathcal{R}(\teta,\cdot)$ is convex, even, and 
       \begin{equation}
         \label{R2-prop}
       \mathcal{R}(\teta,\zeta) = 0 \ \text{ if and only if } \ \zeta =0;
       \end{equation}
        \item[$(\mathcal{R}_3)$] 
        there exists $\Phi_{\mathcal{R}}: \Xsp \to [0,+\infty)$ with $\lim_{\|\zeta\|\up +\infty} \frac{\Phi_{\mathcal{R}}(\zeta)}{\|\zeta\|} =+\infty$ such that 
        \begin{equation}
         \label{R3-prop}
     \forall\, (\teta,\zeta) \in \Xsp\times \Xsp\,: \quad   \mathcal{R}(\teta,\zeta) \geq \Phi_{\mathcal{R}}(\zeta)\,.
       \end{equation}
  \end{itemize}
  For later use, we point out that property $(\mathcal{R}_3) $ is indeed equivalent to the existence of a \emph{convex} and increasing function $\phi_{\mathcal{R}}: [0,+\infty) \to [0,+\infty)$ such that 
  \begin{equation}
  \label{later-use}
     \forall\, (\teta,\zeta) \in \Xsp\times \Xsp\,: \quad   \mathcal{R}(\teta,\zeta) \geq  \phi_{\mathcal{R}}(\|\zeta\|)\,.
  \end{equation}
  \par
 Relying on properties $(\mathcal{R}_1) $-- 
 $(\mathcal{R}_3) $  we are in a position to prove the following result.
  \begin{proposition}
  \label{prop:action-int-props}
  Let $\Wcostname: (0,+\infty)\times \Xsp \times \Xsp \to [0,+\infty)$ be defined by
  \begin{equation}
 \label{Finsler-cost}
 \Wcostname(\tau,u,v): = \inf \left\{ \int_0^{\tau} \finsl{\Theta(r)}{\Theta'(r)} \dd r \, : \ \Theta \in \AC([0,\tau];\Xsp)\,, \ \Theta(0)=u, \ \Theta(\tau)= v  \right\}.
 \end{equation} 
 Then, 
  the infimum in \eqref{Finsler-cost} is attained, 
 $\Wcostname$ is a continuous action cost, with local superlinear growth.
  \end{proposition}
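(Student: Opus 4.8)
The plan is to run the direct method of the calculus of variations, after first disposing of the algebraic axioms, which come essentially for free.

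I would begin with finiteness and the two elementary axioms. The affine curve $\Theta(r)=u+\tfrac r\tau(v-u)$ is admissible in \eqref{Finsler-cost} and, staying in a bounded set with constant velocity $(v-u)/\tau$, has finite action because $\mathcal{R}$ is bounded on bounded sets; hence $\Wcostname(\tau,u,v)<+\infty$. Symmetry \eqref{symmetry} follows from time-reversal: if $\Theta$ joins $u$ to $v$ on $[0,\tau]$, then $\tilde\Theta(r):=\Theta(\tau-r)$ joins $v$ to $u$ with the same action, since $\mathcal{R}(\tilde\Theta,\tilde\Theta')=\mathcal{R}(\Theta(\tau-\cdot),-\Theta'(\tau-\cdot))=\mathcal{R}(\Theta(\tau-\cdot),\Theta'(\tau-\cdot))$ by the evenness in $(\mathcal{R}_2)$. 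The concatenation inequality \eqref{e:psi3} follows by gluing: near-optimal curves $\Theta_1$ on $[0,\tau_1]$ from $u_1$ to $u_2$ and $\Theta_2$ on $[0,\tau_2]$ from $u_2$ to $u_3$ concatenate to an admissible curve on $[0,\tau_1+\tau_2]$ whose action is the sum of the two actions.

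The heart of the matter is attainment. Fix $\tau,u,v$ and a minimizing sequence $\Theta_n$. By $(\mathcal{R}_3)$ in the form \eqref{later-use}, $\int_0^\tau \phi_{\mathcal{R}}(\|\Theta_n'\|)\,\dd r$ is bounded; since $\phi_{\mathcal{R}}$ is convex with superlinear growth, the de la Vallée–Poussin criterion makes $\{\Theta_n'\}$ uniformly integrable in $L^1([0,\tau];\Xsp)$, and the reflexivity of $\Xsp$ renders the vector-valued Dunford–Pettis criterion applicable, giving a subsequence with $\Theta_n'\weakto w$ in $L^1$. Setting $\Theta(r):=u+\int_0^r w$ produces $\Theta\in\AC([0,\tau];\Xsp)$ with $\Theta'=w$, and testing the weak convergence against $\mathbf 1_{[0,r]}x^*$ yields $\Theta_n(r)\weakto\Theta(r)$ for every $r$; in particular $\Theta(\tau)=v$, so $\Theta$ is admissible. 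The decisive step is the lower-semicontinuity inequality $\int_0^\tau\mathcal{R}(\Theta,\Theta')\le\liminf_n\int_0^\tau\mathcal{R}(\Theta_n,\Theta_n')$. This is exactly where $(\mathcal{R}_1)$ and the convexity in $(\mathcal{R}_2)$ enter: the integrand is convex in the velocity and jointly sequentially weakly lower semicontinuous, so a weak–weak lower-semicontinuity theorem for integral functionals (of Ioffe/Young-measure type, in the form underlying \cite{PRST22}) applies to the pair $(\Theta_n,\Theta_n')\weakto(\Theta,w)$. I expect this semicontinuity under merely \emph{weak} convergence of the states $\Theta_n$ — which, as simple oscillation examples in infinite dimensions show, cannot be upgraded to strong convergence — to be the main obstacle; hypothesis $(\mathcal{R}_1)$ is tailored precisely to overcome it.

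With attainment available the remaining properties are short. Strict positivity \eqref{e:psi2} is immediate: $\Wcostname(\tau,u,u)=0$ via the constant curve, as $\mathcal{R}(u,0)=0$; conversely, if $\Wcostname(\tau,u,v)=0$ then a minimizer $\Theta$ satisfies $\mathcal{R}(\Theta,\Theta')=0$ a.e., whence $\Theta'=0$ a.e. by $(\mathcal{R}_2)$ and $u=\Theta(0)=\Theta(\tau)=v$. For local superlinear growth \eqref{e:superlinear}, Jensen's inequality for the convex $\phi_{\mathcal{R}}$ together with $\int_0^\tau\|\Theta'\|\ge\|v-u\|$ gives the a priori bound $\Wcostname(\tau,u,v)\ge\tau\,\phi_{\mathcal{R}}(\|v-u\|/\tau)$, and as $\tau\downarrow 0$ the superlinearity of $\phi_{\mathcal{R}}$ forces $\Wcostsup{u}{v}=+\infty$ whenever $u\neq v$. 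Finally, continuity in $\tau$: by \eqref{e:decreasing} the map $\tau\mapsto\Wcostname(\tau,u,v)$ is nonincreasing, so by \eqref{e:continuity1} it suffices to match the one-sided limits. Right-continuity follows from attainment and the same semicontinuity argument, reparametrizing minimizers $\Theta^{\tau'}$ with $\tau'\downarrow\tau$ onto the common interval $[0,\tau]$ to obtain $\Wcostname(\tau,u,v)\le\liminf_{\tau'\downarrow\tau}\Wcostname(\tau',u,v)$, which with monotonicity gives equality. Left-continuity instead needs an upper bound: from a minimizer $\Theta$ on $[0,\tau]$ I would build competitors on $[0,\tau-\delta]$ by a non-uniform time reparametrization that keeps the speed unchanged where $\|\Theta'\|$ is large and speeds the curve up only on the sublevel set $\{\|\Theta'\|\le k\}$; there $\mathcal{R}(\Theta,M\Theta')$ stays bounded, and since $\mathcal{R}(\teta,\cdot)$ is continuous (being convex and locally bounded), dominated convergence as $\delta\downarrow 0$ (so the local speed-up factor $M\to 1$) shows the excess cost vanishes, yielding $\limsup_{\tau'\uparrow\tau}\Wcostname(\tau',u,v)\le\Wcostname(\tau,u,v)$.
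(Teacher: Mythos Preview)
Your overall strategy coincides with the paper's: the direct method for attainment (uniform integrability of $\Theta_n'$ via \eqref{later-use} and de la Vall\'ee--Poussin, weak compactness in $W^{1,1}$ by reflexivity, then an Ioffe-type lower-semicontinuity theorem---the paper cites \cite{Valadier90}) and Jensen's inequality applied to $\phi_{\mathcal R}$ for the local superlinear growth are exactly what the paper does. The paper merely says the action-cost axioms are ``easy to check'', whereas you spell out symmetry via time reversal and concatenation via gluing; your use of attainment to get strict positivity off the diagonal is also clean.

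The continuity argument is where the two diverge. For $\tau_n\downarrow\tau$ the paper does \emph{not} reparametrize: it restricts the minimizers $\Theta_n$ to $[0,\tau]$, uses uniform integrability of $\Theta_n'$ to force $\|\Theta_n(\tau_n)-\Theta_n(\tau)\|\to 0$, so that the weak limit $\widehat\Theta$ on $[0,\tau]$ still ends at $v$, and then lower semicontinuity yields $\Wcostname(\tau,u,v)\le\liminf_n\Wcostname(\tau_n,u,v)$. Your proposed linear reparametrization onto $[0,\tau]$ is problematic here: since the scaling factor $\tau'/\tau>1$ speeds the curve up, convexity of $\mathcal R(\teta,\cdot)$ with $\mathcal R(\teta,0)=0$ gives only a \emph{lower} bound $\mathcal R(\teta,\lambda\zeta)\ge\lambda\mathcal R(\teta,\zeta)$ for $\lambda\ge1$, so you lose the a priori bound on the rescaled actions that compactness requires. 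The paper's restriction-plus-endpoint-matching trick avoids this. Conversely, for $\tau_n\uparrow\tau$ the paper writes only ``with analogous arguments'', which is not entirely transparent (the na\"ive restriction lands at $\Theta(\tau_n)\neq v$, and the na\"ive linear speed-up again lacks a dominating bound); your selective speed-up on the sublevel set $\{\|\Theta'\|\le k\}$ is a correct and more explicit way to obtain $\limsup_{\tau'\uparrow\tau}\Wcostname(\tau',u,v)\le\Wcostname(\tau,u,v)$, since on that set the rescaled velocities stay bounded, $\mathcal R$ is bounded on bounded sets, and dominated convergence applies. So: use the paper's approach for right-continuity, keep yours for left-continuity.
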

\begin{proof}
It is easy to check that $\Wcostname$   is an action cost in the sense of Def.\ \ref{ass-W}. We now show that the $\inf$ in \eqref{Finsler-cost} is attained for all $(\tau,u,v) \in (0,+\infty)\times \Xsp\times \Xsp$. Indeed, let  $(\Theta_n)_n$ be a  minimizing sequence: thanks to   \eqref{R3-prop} we have 
$
\sup_n \int_0^\tau \Phi_{\mathcal{R}}(\Theta_n'(r)) \dd r \leq C $.
Combining this with the fact that $\Phi_{\mathcal{R}}$ has superlinear growth, and taking into account that $\Theta_n(0)=u$ and $\Theta_n(1)=v$, we conclude that
\begin{equation}
    \label{estimates}
(\Theta_n)_n \text{ is bounded in } L^\infty(0,\tau;\Xsp) \text{ and $(\Theta_n')_n $ is uniformly integrable in }
L^1(0,\tau;\Xsp).
\end{equation}
Therefore, there exist a  (not relabeled) subsequence and $\Theta \in \AC ([0,\tau];\Xsp)$ such that 
\[
\begin{cases}
\Theta_n \weakto \Theta &  \text{ in } W^{1,1}(0,\tau;\Xsp),
\\
\Theta_n(t) \weakto \Theta(t) & \text{ in } \Xsp \text{ for all $t \in [0,T],$}
\end{cases}
\]
so that $\Theta$ connects $u$ to $v$. 
By a variant 
(cf.\ \cite[Theorem 21]{Valadier90})
of the Ioffe Theorem we gather that 
\[
 \Wcostname(\tau,u,v)=\liminf_{n\to\infty} \int_0^\tau \finsl{\Theta_n(r)}{\Theta_n'(r)} \dd r \geq \int_0^\tau \finsl{\Theta(r)}{\Theta'(r)} \dd r \,.
\]
\par
In order to prove continuity of $\Wcost{\cdot}uv$ with fixed $u,\,v \in \Xsp$,
let us take $\tau>0$ and a sequence $\tau_n \downarrow \tau$; let 
$\Theta_n$ and $\Theta$ optimal curves for 
 $\Wcostname(\tau_n,u,v)$ and $  \Wcostname(\tau,u,v) $, respectively. Extend $\Theta$ to a curve $\overline\Theta$ on $[0,\tau_n]$  by setting $\overline\Theta(t): = \Theta(\tau) = v$ for all $t \in (\tau,\tau_n]$. Then, $\overline\Theta$  is an admissible competitor for the minimum problem defining $\Wcostname(\tau_n,u,v)$, and we thus have for all $n\in \N$
 \[
 \Wcostname(\tau_n,u,v) \leq \int_0^{\tau_n} \finsl{\overline{\Theta}(r)}{\overline{\Theta}'(r)} \dd r
  \stackrel{(1)}=  \int_0^{\tau} \finsl{\overline{\Theta}(r)}{\overline{\Theta}'(r)} \dd r =  \Wcostname(\tau,u,v)\,,
 \]
where {\footnotesize (1)} follows from the fact that $\overline{\Theta}' \equiv 0$ on $[\tau,\tau_n]$. 
Therefore,
\begin{equation}
    \label{limsup-inequality}
\limsup_{n\to\infty}  \Wcostname(\tau_n,u,v) \leq\Wcostname(\tau,u,v)\,.
\end{equation}
We now aim to show 
\begin{equation}
    \label{liminf-inequality}
\liminf_{n\to\infty}  \Wcostname(\tau_n,u,v) 
= \liminf_{n\to\infty} \int_0^{\tau_n} \finsl{\Theta_n(r)}{\Theta_n'(r)} \dd r
\geq\Wcostname(\tau,u,v)
\end{equation}
Let $(\Theta_n)_n$ be a (non-relabeled) subsequence for which the above $\liminf$ is a $\lim$. It follows from \eqref{limsup-inequality}
that for $(\Theta_n)_n$  estimates \eqref{estimates} hold. In particular, from the uniform integrability of $(\Theta_n')_n$ we gather that 
\[
\forall\, \epsilon>0 \ \exists\, \bar n \in \N\ \forall\, n \geq \bar n \, : \|\Theta_n(\tau_n){-}\Theta_n(\tau)\| \leq \int_{\tau}^{\tau_n} \|\Theta_n'(r) \| \dd r \leq \epsilon\,.
\]
Choosing $\epsilon = \frac1k$, we thus extract  subsequences $(\tau_{n_k})_k$
 and $(\Theta_{n_k})_k$
such that for all $k\geq 1$
\[
\|\Theta_{n_k}(\tau_{n_k}){-}\Theta_{n_k}(\tau)\| \leq  \frac1{k}\,.
\]
Now, the same compactness arguments as in the above lines apply to the sequence $(\Theta_{n_k}|_{[0,\tau]})_k$, yielding convergence, along a non-relabeled subsequence, to a curve $\widehat\Theta \in \AC ([0,\tau];\Xsp)$  connecting $u$ to $v$, since
\[
\widehat\Theta(\tau) = \lim_{k\to\infty} \Theta_{n_k}(\tau) = \lim_{k\to\infty} \Theta_{n_k}(\tau_{n_k}) = v\,.
\]
 Therefore, we 
 gather that 
 \[
\Wcost \tau uv \leq  \int_0^{\tau} \finsl{\widehat\Theta(r)}{\widehat\Theta'(r)} \dd r \leq \liminf_{k\to\infty} 
\int_0^{\tau_{n_k}} \finsl{\Theta_{n_k}(r)}{\Theta_{n_k}'(r)} \dd r = \liminf_{n\to\infty}  \Wcostname(\tau_n,u,v)\,,
 \]
 and \eqref{liminf-inequality} ensues. 
 All in all, we have shown that 
 $\lim_{n\to\infty}\Wcost{\tau_n}uv = \Wcost{\tau}uv  $ whenever $\tau_n\down \tau$. With analogous arguments, we show that
  $\lim_{n\to\infty}\Wcost{\tau_n}uv = \Wcost{\tau}uv $ if $\tau_n\up \tau$.
  \par
  In order to show that  
  \begin{equation}
\label{local-sup-growth}   
\Wcostsup{u}{v}=+\infty \qquad \text{if $u \neq v$,}
\end{equation}
let us observe that, for any fixed $\tau>0$, with  $\Theta$ an optimal curve for $\Wcost \tau uv$, the following estimates hold:
  \[
  \begin{aligned}
\Wcost \tau uv = \tau \dashint_0^\tau 
\finsl{\Theta(t)}{\Theta'(t)} \dd t \stackrel{(1)}\geq 
\tau  \dashint_0^\tau \phi_{\mathcal{R}}(\|\Theta'(t)\|) \dd t 
& \stackrel{(2)}\geq \tau \phi_{\mathcal{R}}
\left(\left\| \dashint_0^\tau \Theta'(t) \dd t \right\|\right)
\\ & 
= \tau \phi_{\mathcal{R}}\left(\frac{\|v{-}u\|}\tau \right)
\end{aligned}
  \]
  where {\footnotesize (1)} ensues from \eqref{later-use}, and {\footnotesize (2)} from Jensen's inequality, as  $\phi_{\mathcal{R}}$ is  convex. 
  Property \eqref{local-sup-growth} then follows, taking into account that $\phi_{\mathcal{R}}$ has superlinear growth at infinity.  
\end{proof}

 \bigskip
 
 

\section{The topology induced by 
an  action cost}
\label{s:3}

Our first step is to show that 
an  action cost  $\Wcostname$
induces a natural Hausdorff 
topology in $\Xsp$ satisfying the first countability axiom.
We start by introducing 
a fundamental system of neighborhoods of 
every $\xv\in \Xsp$: it is 
the collection of sets
$\Nei \xv\tau c$ indexed by the real
parameters $\tau,c>0$
\[
\Nei \xv\tau c: = \Big\{ v \in \Xsp\, : \Wcost \tau \xv v <c \Big\} \quad \text{for } \xv \in \Xsp,  \ \tau\in (0,+\infty), \ c\in (0,+\infty)\,.
\]
\begin{proposition}
\label{prop:topology}
  Let $(\Xsp,\Wcostname)$ be an action space
according to Definition \ref{ass-W} \nc 
and consider for every $\xv\in \Xsp$ 
the family of sets 
\begin{equation}
\label{e:base}
    \mathscr U(\xv):=\Big\{\mathrm U\subset \Xsp:\mathrm U\supset \Nei \xv\tau c
\text{ for some }\tau,c>0\Big\}.
\end{equation}
The collection 
$\mathscr U(\xv)$ satisfies the 
axioms of a (Hausdorff) 
neighborhood system, 
i.e.
\begin{enumerate}
\item If $\mathrm U\in \mathscr U(\xv)$
and $\mathrm U\subset \mathrm U'$ then
$\mathrm U'\in \mathscr U(\xv)$;
\item Every finite intersection of 
elements of $\mathscr U(\xv)$ belongs to $\mathscr U(\xv)$;
    \item 
    The element $\xv$ belongs to  every 
    set of $\mathscr U(\xv)$;
    \item If $\mathrm U\in \mathscr U(\xv)$
    then there is $\mathrm V\in \mathscr U(\xv)$
    such that $\mathrm U\in \mathscr U(v)$ for every $v\in \mathrm V$;
    \item If $\xv_1\neq \xv_2$ 
    then there exist 
    $\mathrm U_i\in \mathscr U(\xv_i)$, 
    $i=1,2$, such that 
    $\mathrm U_1\cap\mathrm U_2=\varnothing$.
\end{enumerate}
In particular, there exists a unique 
topology $\Wtop$ such that 
for every $\xv\in \Xsp$ 
$\mathscr U(\xv)$
is the system of neighborhoods
in the topology $\Wtop$.  Moreover, 
$\Wtop$ is a Hausdorff 
topology satisfying the first countability axiom. 
\end{proposition}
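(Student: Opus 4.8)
The plan is to verify the five neighborhood axioms (1)--(5) directly from the defining properties of an action cost, and then to invoke the standard characterization of topologies through neighborhood filters to produce the unique topology $\Wtop$. The whole argument rests on one elementary monotonicity observation about the basic sets: from \eqref{e:decreasing} one has
\[
\tau'\le \tau,\ c'\le c\quad\Longrightarrow\quad \Nei{\xv}{\tau'}{c'}\subseteq \Nei{\xv}{\tau}{c},
\]
since $\Wcost{\tau}{\xv}{v}\le \Wcost{\tau'}{\xv}{v}<c'\le c$ for $v\in\Nei{\xv}{\tau'}{c'}$. Granting this, axiom (1) is just the upward closure built into the definition \eqref{e:base}; axiom (2) follows by taking, for $\mathrm U_i\supseteq \Nei{\xv}{\tau_i}{c_i}$ ($i=1,2$), the single basic set $\Nei{\xv}{\tau_1\wedge\tau_2}{c_1\wedge c_2}$, which by the observation is contained in both $\mathrm U_i$ and hence in $\mathrm U_1\cap\mathrm U_2$, certifying $\mathrm U_1\cap\mathrm U_2\in\mathscr U(\xv)$ (the general finite case then follows by induction); and axiom (3) is immediate from $\Wcost{\tau}{\xv}{\xv}=0<c$, guaranteed by \eqref{e:psi2}.

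The crucial step is axiom (4), and it is here that the concatenation inequality \eqref{e:psi3} enters. Given $\mathrm U\supseteq \Nei{\xv}{\tau}{c}$, I would set $\mathrm V:=\Nei{\xv}{\tau/2}{c/2}\in\mathscr U(\xv)$ and claim that $\Nei{v}{\tau/2}{c/2}\subseteq \Nei{\xv}{\tau}{c}\subseteq \mathrm U$ for every $v\in\mathrm V$; since $\Nei{v}{\tau/2}{c/2}\in\mathscr U(v)$, axiom (1) then yields $\mathrm U\in\mathscr U(v)$. The claimed inclusion is the heart of the matter: for $v\in\mathrm V$ and $w\in\Nei{v}{\tau/2}{c/2}$ we have $\Wcost{\tau/2}{\xv}{v}<c/2$ and $\Wcost{\tau/2}{v}{w}<c/2$, so \eqref{e:psi3} gives $\Wcost{\tau}{\xv}{w}\le \Wcost{\tau/2}{\xv}{v}+\Wcost{\tau/2}{v}{w}<c$, i.e.\ $w\in\Nei{\xv}{\tau}{c}$. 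The simultaneous halving of both the time parameter and the radius is exactly what makes the triangle-type estimate close up, and this is the one point where care is needed with the direction of the $\tau$-monotonicity (the cost decreases as $\tau$ grows, so the two half-steps must be added so that their total time equals $\tau$).

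For the Hausdorff axiom (5) I would use strict positivity off the diagonal. Fix $\xv_1\neq \xv_2$, choose any $\tau>0$, and set $\delta:=\Wcost{2\tau}{\xv_1}{\xv_2}$, which is strictly positive (and finite) by \eqref{e:psi2}. I claim $\Nei{\xv_1}{\tau}{\delta/2}\cap\Nei{\xv_2}{\tau}{\delta/2}=\varnothing$: if $w$ belonged to both, then using symmetry \eqref{symmetry} together with concatenation \eqref{e:psi3} we would get $\delta=\Wcost{2\tau}{\xv_1}{\xv_2}\le \Wcost{\tau}{\xv_1}{w}+\Wcost{\tau}{w}{\xv_2}<\delta/2+\delta/2=\delta$, a contradiction. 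Finally, once (1)--(4) are established, the standard theorem on neighborhood systems provides a unique topology $\Wtop$ on $\Xsp$ for which each $\mathscr U(\xv)$ is the neighborhood filter of $\xv$ (a set being $\Wtop$-open precisely when it belongs to $\mathscr U(\xv)$ for each of its points); axiom (5) makes $\Wtop$ Hausdorff, and first countability follows because, by the monotonicity observation, $\{\Nei{\xv}{1/n}{1/n}\}_{n\in\N}$ is a countable base of $\mathscr U(\xv)$: any $\Nei{\xv}{\tau}{c}$ contains $\Nei{\xv}{1/n}{1/n}$ as soon as $1/n\le \tau\wedge c$. The main obstacle is thus concentrated in axiom (4), while the remaining axioms reduce to the monotonicity observation and the defining properties \eqref{e:psi2}, \eqref{symmetry}.
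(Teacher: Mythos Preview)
Your proof is correct and follows essentially the same route as the paper: the same monotonicity observation for the basic sets, the same halving trick $\mathrm V=\Nei{\xv}{\tau/2}{c/2}$ for axiom (4) via the concatenation inequality, the same contradiction argument for (5), and the same countable base $\{\Nei{\xv}{\varepsilon_n}{\varepsilon_n}\}_n$ for first countability. The only cosmetic differences are your choice of $\tau_1\wedge\tau_2$ in (2) (the paper writes $\max_n\tau_n$, which appears to be a typo) and your parameterization in (5) with $\delta=\Wcost{2\tau}{\xv_1}{\xv_2}$ versus the paper's $c=\Wcost{\tau}{\xv_1}{\xv_2}$ and half-time balls.
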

\begin{proof}
Property (1): obvious by the definition
of \eqref{e:base}.

\medskip\noindent
Property (2): if $\mathrm U_n\in 
\mathscr U(\xv)$, $n=1,\cdots, N$,
then we can find
$\tau_n,c_n>0$, $n=1,\cdots N$, such that 
$\mathrm U_n\supset \Nei\xv{\tau_n}{c_n}$.
Setting $\tau:=\max_n \tau_n$ and
$c_n:=\min_n c_n$,
\eqref{e:decreasing} shows that 
$\Nei\xv\tau c\subset \mathrm U_n$
for every $n$, so that 
$\bigcap_{n=1}^N \mathrm U_n\in \mathscr U(\xv)$.

\medskip\noindent
Property (3) 
is obvious since
$ \Wcost \tau {\xv}{\xv} =0
$
for all $\tau>0$.

\medskip\noindent
Property (4): 
since $\mathrm U\in \mathscr U(\xv)$ 
there exists $\tau,c>0$ 
such that 
$\Nei\xv\tau c\subset \mathrm U$.
We then define
$\mathrm V:=\Nei \xv{\tau/2}{c/2}$
and we observe that 
for every $v\in \mathrm V$
and every $z\in \Nei v{\tau/2}{c/2}$
\begin{equation*}
    \Wcost\tau z\xv
    \le 
    \Wcost{\tau/2} zv+
    \Wcost{\tau/2}v\xv
    <c/2+c/2=c
\end{equation*}
so that $z\in \mathrm U$.
Therefore $\mathrm U\supset \Nei v{\tau/2}{c/2}$,   whence $\mathrm{U} \in  \mathscr U(v)$. 

\medskip\noindent
Property (5):
For some $\tau>0$ we define
$c: = \Wcost{\tau}{\xv_1}{\xv_2}>0$
and we set $\mathrm U_i:=
\Nei{\xv_i}{\tau/2}{c/2}.$
Clearly 
$\mathrm U_1\cap \mathrm U_2=\emptyset$,
since 
otherwise, there would exist $y$ such that $\Wcost{\tfrac \tau 2}{\xv_i}y < c/2$
 for $i=1,2$, and thus by the triangle inequality
 \[
 \Wcost{\tau}{\xv_1}{\xv_2} \leq \Wcost{\tfrac \tau 2}{\xv_1}y + \Wcost{\tfrac \tau 2}y{\xv_2} < c  =  \Wcost{\tau}{\xv_1}{\xv_2}\,,
 \]
which is a contradiction.

In order to show that $\Wtop$ satisfies
the first countability axiom, it is 
sufficient to observe that for every $\xv\in \Xsp$
the sets
$
 (\Nei\xv{2^{-n}}{2^{-n}})_{n\in \N} $
form  a countable fundamental system of neighborhoods.
\end{proof}
\begin{remark}
\upshape
\label{unique-parameter}
The last statement in Proposition \ref{prop:topology}, and in particular the fact that sets $
 (\Nei\xv{2^{-n}}{2^{-n}})_{n\in \N} $, parametrized by the sole index $n$, provide a fundamental system of neighborhoods,
 suggest that, to generate the topology $\Wtop$,  it would be sufficient to work with a family of neighborhoods parametrized by a single 
 real parameter.
  Interestingly, 
 this approach is equivalent to the one 
 in which the topology is 
 generated by the sets $\Nei \xv\tau c$,
 which naturally arise from
 an   action cost. 
\end{remark} 
\par
We can considerably refine the previous
Proposition by showing, more or less with the same argument, that 
$\Wcostname$  even  induces a \emph{uniform structure} on $\Xsp$ (see
\cite[Chapter II, \S 1]{Bourbaki1}), which allows us to formalize the concept of  `closeness' of two points. More precisely, 
for every $\tau,c>0$ we define
\begin{equation}
    \label{eq:uniform}
    \VNei{\tau}{c}:=
    \Big\{(\xv_1,\xv_2)\in \Xsp{\times}\Xsp\,: \ 
    \Wcost\tau{\xv_1}{\xv_2}<c\Big\}.
\end{equation}
We will also use the following notation
for subsets $\mathrm V,\mathrm V_i$ of $\Xsp\times \Xsp:$
\begin{equation}
    \mathrm V^{-1}:=\Big\{(\xv_2,\xv_1):
    (\xv_1,\xv_2)\in \mathrm V\Big\}
\end{equation}
\begin{equation}
    \mathrm V_2\circ\mathrm V_1:=
    \Big\{(\xv_1,\xv_3)\in \Xsp{\times}\Xsp\,:
    \exists\, \xv_2\in \Xsp\text{ such that }
    (\xv_1,\xv_2)\in \mathrm V_1,\ 
    (\xv_2,\xv_3)\in \mathrm V_2\Big\}.
\end{equation}
\begin{proposition}
\label{prop:uniform-structure}
    Let 
 $(\Xsp,\Wcostname)$ 
 be an action space.
The family of sets 
\begin{equation}
\label{e:base-uniform}
    \mathfrak U:=\Big\{\mathrm U\subset \Xsp\times\Xsp:\mathrm U\supset \VNei \tau c
\text{ for some }\tau,c>0\Big\}.
\end{equation}
 satisfies the 
axioms of a uniform structure,
i.e.
\begin{enumerate}
\item If $\mathrm U\in \mathfrak U$
and $\mathrm U\subset \mathrm U'$ then
$\mathrm U'\in \mathfrak U$;
\item Every finite intersection of 
elements of $\mathfrak U$ 
belongs to $\mathfrak U$;
\item Every $\mathrm U\in \mathfrak U$
contains the 
diagonal $\Delta:=\Big\{(\xv,\xv):\xv\in \Xsp\Big\}$ in $\Xsp\times \Xsp$;
\item If $\mathrm U\in \mathfrak U$
then also $\mathrm U^{-1}$
belongs to $\mathfrak U$;
    \item if $\mathrm U\in \mathfrak U$
    then there is $\mathrm V\in \mathfrak U$
    such that $\mathrm V\circ\mathrm V
    \subset \mathrm U$. 
\end{enumerate}
\end{proposition}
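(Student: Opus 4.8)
The plan is to verify the five axioms one at a time, observing that the basic entourages $\VNei{\tau}{c}$ play on $\Xsp\times\Xsp$ exactly the role that the neighborhoods $\Nei{\xv}{\tau}{c}$ played in Proposition \ref{prop:topology}; the whole argument therefore runs parallel to that proof, now carried out on pairs rather than fiberwise. Axiom (1) is immediate from the defining formula \eqref{e:base-uniform} of $\mathfrak U$ as the collection of all supersets of some basic entourage.

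For axiom (2) I would take $\mathrm U_1,\dots,\mathrm U_N\in\mathfrak U$, choose $\tau_k,c_k>0$ with $\VNei{\tau_k}{c_k}\subset\mathrm U_k$, and set $\tau:=\min_k\tau_k$ and $c:=\min_k c_k$. Since $\tau\mapsto\Wcost{\tau}{u_0}{u_1}$ is decreasing by \eqref{e:decreasing}, the set $\VNei{\tau}{c}$ is increasing in both parameters, so that from $\tau\le\tau_k$ and $c\le c_k$ one gets $\VNei{\tau}{c}\subset\VNei{\tau_k}{c_k}\subset\mathrm U_k$ for every $k$; hence $\VNei{\tau}{c}\subset\bigcap_k\mathrm U_k$ and the intersection lies in $\mathfrak U$. (It is precisely the decreasing character of the cost that forces the choice $\tau=\min_k\tau_k$.) Axiom (3) is a consequence of strict positivity off the diagonal \eqref{e:psi2}: since $\Wcost{\tau}{\xv}{\xv}=0<c$ for all $\tau,c>0$, every basic entourage contains the diagonal $\Delta$, and hence so does every $\mathrm U\in\mathfrak U$.

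Axiom (4) follows at once from symmetry \eqref{symmetry}, which makes each basic entourage symmetric, $\VNei{\tau}{c}^{-1}=\VNei{\tau}{c}$; therefore $\mathrm U\supset\VNei{\tau}{c}$ gives $\mathrm U^{-1}\supset\VNei{\tau}{c}$, so $\mathrm U^{-1}\in\mathfrak U$. The only axiom carrying genuine content is (5), where the concatenation inequality \eqref{e:psi3} substitutes for the triangle inequality of a metric uniformity. Given $\mathrm U\supset\VNei{\tau}{c}$, I would take $\mathrm V:=\VNei{\tau/2}{c/2}\in\mathfrak U$ and check $\mathrm V\circ\mathrm V\subset\VNei{\tau}{c}$: if $(\xv_1,\xv_3)\in\mathrm V\circ\mathrm V$ there is $\xv_2$ with $\Wcost{\tau/2}{\xv_1}{\xv_2}<c/2$ and $\Wcost{\tau/2}{\xv_2}{\xv_3}<c/2$, and concatenation yields $\Wcost{\tau}{\xv_1}{\xv_3}\le\Wcost{\tau/2}{\xv_1}{\xv_2}+\Wcost{\tau/2}{\xv_2}{\xv_3}<c$, that is $(\xv_1,\xv_3)\in\VNei{\tau}{c}\subset\mathrm U$.

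I expect no real obstacle: each step reduces to one of the three defining properties of an action cost together with the monotonicity \eqref{e:decreasing}, and the halving device in (5) is the same one used for property (4) of Proposition \ref{prop:topology}. The only places needing a moment's care are the direction of the monotonicity in axiom (2) and the bookkeeping of the composition order in axiom (5).
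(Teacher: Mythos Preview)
Your proposal is correct and follows essentially the same approach as the paper, which simply refers back to the arguments of Proposition~\ref{prop:topology} for axioms (1)--(3), invokes symmetry for (4), and uses the halving $\mathrm V:=\VNei{\tau/2}{c/2}$ together with concatenation for (5). Your treatment of axiom (2) is in fact more careful: the choice $\tau=\min_k\tau_k$ is the right one (since $\VNei{\tau}{c}$ is increasing in $\tau$), whereas the paper's referenced proof of Proposition~\ref{prop:topology} writes $\tau=\max_n\tau_n$, which appears to be a slip.
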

\begin{proof}
    The proof of properties (1), (2), (3)
    follows  by the same arguments for
    the corresponding properties
    of Proposition \ref{prop:topology}.
    
    Property (4) is an immediate consequence
    of \eqref{symmetry} and
    the symmetry of 
    $\Wcostname$, yielding that 
     $(u_1,u_2) \in \VNei{\tau}{c}$ if and only if  $(u_2,u_1) \in \VNei{\tau}{c}$.

    In order to prove property (5)
    we argue as for claim (4) of Proposition
    \ref{prop:topology}:
    we first select $\tau,c>0$
    such that $\VNei\tau c\subset \mathrm U$
    and we set 
    $\mathrm V:=\VNei{\tau/2}{c/2}$,
    observing that 
    if $(\xv_1,\xv_3)\in \mathrm V\circ\mathrm V$
    we can find $\xv_2$ such that 
    $\Wcost{\tau/2}{\xv_1}{\xv_2}<c/2$
    and 
    $\Wcost{\tau/2}{\xv_2}{\xv_3}<c/2$.
    Applying \eqref{e:psi3}
    we get
    $\Wcost{\tau}{\xv_1}{\xv_2}<c$,
    i.e.~$(\xv_1,\xv_3)\in \mathrm U.$
%
\end{proof}
\par Following the terminology of \cite{Bourbaki1}, we shall refer to the sets of $\mathfrak{U}$ as \emph{entourages}; if $\mathrm{V} $ is an entourage
in $\mathfrak{U}$ and $(\xv, \xv') \in \mathrm{V}$, we may say that $\xv$ and $\xv'$ are \emph{`$\mathrm{V}$-close'}. Likewise,  the structure $\mathfrak{U}$ induces a topology 
on $\Xsp$ such that the neighborhoods 
    of a point $\xv\in \Xsp$ are given by
    the sets 
    \begin{equation}
    \label{topology-structure}
    \mathrm V(\xv):=
    \{y\in \Xsp\,: \  (\xv,y)\in \mathrm U\}
    \qquad \text{for some $\mathrm U\in \mathfrak U$}.
    \end{equation}
    It is straightforward to check that 
 \begin{proposition}
 \label{prop:coincidence-topologies}
 The  
topology $\Wtop$ coincides with 
the topology induced by 
the uniform structure $\mathfrak U$.
 \end{proposition}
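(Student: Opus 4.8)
The plan is to prove that the topology $\Wtop$ of Proposition \ref{prop:topology} coincides with the topology induced by the uniform structure $\mathfrak U$ of Proposition \ref{prop:uniform-structure}. Since both topologies are determined by specifying, at each point $\xv \in \Xsp$, a fundamental system of neighborhoods, it suffices to show that these two neighborhood systems agree. Concretely, I will show that for every $\xv \in \Xsp$ the families
\[
\mathscr U(\xv) = \Big\{ \mathrm U \subset \Xsp : \mathrm U \supset \Nei\xv\tau c \text{ for some } \tau,c>0\Big\}
\]
and
\[
\Big\{ \mathrm V(\xv) : \mathrm V(\xv) = \{y \in \Xsp : (\xv,y)\in \mathrm U\} \text{ for some } \mathrm U \in \mathfrak U\Big\}
\]
generate the same filter of neighborhoods.

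The key observation, which makes the whole argument essentially a tautology once unwound, is the direct comparison between the basic neighborhoods $\Nei\xv\tau c$ of $\Wtop$ and the sections $\VNei\tau c(\xv)$ of the basic entourages $\VNei\tau c$. First I would record the identity
\[
\VNei\tau c(\xv) = \Big\{ y \in \Xsp : (\xv,y)\in \VNei\tau c\Big\} = \Big\{ y \in \Xsp : \Wcost\tau\xv y < c\Big\} = \Nei\xv\tau c,
\]
which follows immediately by comparing the defining formula \eqref{eq:uniform} for $\VNei\tau c$ with the definition of $\Nei\xv\tau c$. Thus the section at $\xv$ of the basic entourage $\VNei\tau c$ is exactly the basic $\Wtop$-neighborhood $\Nei\xv\tau c$.

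From this identity the coincidence of the two neighborhood systems is straightforward. For the inclusion showing that every $\mathfrak U$-neighborhood of $\xv$ is a $\Wtop$-neighborhood, take $\mathrm U \in \mathfrak U$; by \eqref{e:base-uniform} there are $\tau,c>0$ with $\mathrm U \supset \VNei\tau c$, hence the section $\mathrm V(\xv) \supset \VNei\tau c(\xv) = \Nei\xv\tau c$, so $\mathrm V(\xv) \in \mathscr U(\xv)$. Conversely, given a $\Wtop$-neighborhood containing some $\Nei\xv\tau c$, the basic entourage $\VNei\tau c \in \mathfrak U$ has section $\VNei\tau c(\xv) = \Nei\xv\tau c$ contained in it, so the set is also a neighborhood of $\xv$ in the uniform topology. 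Since both topologies are uniquely determined by their neighborhood filters at each point (as guaranteed by Proposition \ref{prop:topology} for $\Wtop$ and by the general theory of uniform structures for $\mathfrak U$), this proves $\Wtop$ equals the topology induced by $\mathfrak U$.

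Honestly there is no real obstacle here: the statement is flagged in the text as ``straightforward to check,'' and the entire content is the bookkeeping identity $\VNei\tau c(\xv) = \Nei\xv\tau c$. The only point requiring a modicum of care is to invoke cleanly the correct definitions—namely that in a uniform space the neighborhoods of a point are precisely the sections of the entourages, as recorded in \eqref{topology-structure}—and to keep straight that the $\tau$-monotonicity \eqref{e:decreasing} is what guarantees the two families are genuinely \emph{filter bases} generating the same filter, so that containment of one basic set suffices. No additional hypotheses beyond those already established in the two preceding propositions are needed.
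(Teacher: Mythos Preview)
Your proposal is correct and matches the paper's approach: the paper does not actually write out a proof, prefacing the proposition with ``It is straightforward to check that,'' and your argument is precisely the straightforward verification---namely the identity $\VNei\tau c(\xv)=\Nei\xv\tau c$, from which the coincidence of neighborhood filters is immediate. The only minor remark is that the appeal to \eqref{e:decreasing} at the end is not really needed here, since the filter-base properties were already established in Propositions \ref{prop:topology} and \ref{prop:uniform-structure}.
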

\par
Having a uniform structure
at our disposal, we can 
define a corresponding notion of 
Cauchy sequence and completeness.
\begin{definition}[Cauchy sequences
and complete action spaces]
    Let 
    $(\Xsp,\Wcostname)$ be an action space.
     We say that 
     a sequence $(\xv_n)_n\subset \Xsp$  is  a \emph{Cauchy sequence}  
     in the $\mathfrak U$-uniform structure 
if it enjoys the following property:
\begin{equation}
\label{Cauchy}
\forall\, \tau,c>0 \quad \exists\, \bar n \in \N:\quad 
\forall\, n,\, m \geq \bar n\,: \quad  (\xv_n,\xv_m) \in \TuNei{\tau}{c}.
\end{equation}
    We say that $(\Xsp,\Wcostname)$ is
    complete if 
    every Cauchy sequence is convergent.
\end{definition}

\par
As in Proposition
\ref{prop:topology}
we immediately see that 
the filter of the entourages
$\mathfrak U$ of the uniform structure
has a countable base, given by 
the collection 
$\VNei{2^{-n}}{2^{-n}}$.
We can thus apply
\cite[Theorem 1,  Chap. IX,  \S 2]{Bourbaki2}
to obtain the metrizability of $\mathfrak U$. 
In our setting, we can be even more precise
by introducing a metric which is induced by 
$\Wcostname$ and induces the 
same uniform and topological structure:
we will discuss this issue in the next section.
\section{Metric structures induced by 
an action cost}
\label{sec:metric}
\nc
\begin{definition}
    \label{def:Wmetric}
    For every $\xv_1,\xv_2\in \Xsp$
     and every $\lambda>0$ 
    we set
    \begin{align}
    \label{e:Wmetric-def}
        \Wmetricl\lambda(\xv_1,\xv_2):={}&
        \lambda    \inf\Big\{r\ge0:
        \Wcost r{\xv_1}{\xv_2}\le \lambda r\Big\}
        \\={}&
        \label{e:Wmetric-def3}
        \inf\Big\{s\ge0:
        \Wcost {s/\lambda}{\xv_1}{\xv_2}\le s\Big\}.
    \end{align}
    In the particular case
    $\lambda=1$ we also set 
    $\Wmetric:=\Wmetricl1$.
\end{definition}
 Recalling 
\eqref{e:decreasing} 
we have
\[0<\lambda'<\lambda''
\quad\Rightarrow\quad
\Big\{s\ge0:
        \Wcost {s/\lambda''}{\xv_1}{\xv_2}\le s\Big\}
        \subset
     \Big\{s\ge0:
        \Wcost {s/\lambda'}{\xv_1}{\xv_2}\le s\Big\}. 
\]
It is then immediate to check from
\eqref{e:Wmetric-def3} 
that 
the family $\Wmetricl\lambda$ is increasing w.r.t.~$\lambda$:
\begin{equation}
    \label{eq:metric-monotonicity}
    0<\lambda'<\lambda''
    \quad\Rightarrow\quad
    \Wmetricl{\lambda'}\le 
    \Wmetricl{\lambda''}\,.
\end{equation}
On the other hand, 
using \eqref{e:Wmetric-def},
we see that 
$\lambda^{-1}\Wmetricl\lambda$ is decreasing w.r.t.~$\lambda$:
\begin{equation}
    \label{eq:metric-monotonicity2}
    0<\lambda'<\lambda''
    \quad\Rightarrow\quad
    \frac{\Wmetricl{\lambda''}}{\lambda''}\le 
    \frac{\Wmetricl{\lambda'}}{\lambda'}\,,
\end{equation}
so that the metrics $\Wmetricl\lambda$
are equivalent and there holds
\begin{equation}
    \label{eq:metric-equivalence}
     (1\land \lambda)\Wmetric
     \le \Wmetricl{\lambda}\le 
    (1\lor \lambda)\Wmetric.
\end{equation}
\begin{lemma}[Equivalent characterizations]
For every $\lambda>0$ we 
have 
\begin{equation}
    \label{e:upperd}
    \Wmetricl\lambda(\xv_1,\xv_2)\le \lambda r
    \quad\Leftrightarrow\quad
    \Wcostminus{r}{\xv_1}{\xv_2}\le \lambda r
    \quad\Leftrightarrow\quad 
    \Wcost{\tau}{\xv_1}{\xv_2}\le \lambda r
        \quad\text{for every }\tau>r,
\end{equation}
and
\begin{equation}
    \label{e:lowerd}
    \Wmetricl\lambda(\xv_1,\xv_2)\ge \lambda r
    \quad\Leftrightarrow\quad
    \Wcostplus{r}{\xv_1}{\xv_2}\ge \lambda r
    \quad\Leftrightarrow\quad
    \Wcost{\tau}{\xv_1}{\xv_2}\ge \lambda r
        \quad\text{for every }\tau<r.
\end{equation}
In particular, 
$\Wmetricl\lambda$ can also be characterized by 
\begin{equation}
    \label{e:equivalent}
     \Wmetricl\lambda(\xv_1,\xv_2)=
    \lambda r
    \quad\Leftrightarrow\quad
    \Wcostminus r{\xv_1}{\xv_2}
    \le \lambda r\le \Wcostplus r{\xv_1}{\xv_2},
\end{equation}
and by the variational formulae
\begin{align}
\label{eq:variational1}
    \Wmetricl\lambda(u_1,u_2)&=
    \min_{\tau>0}\Wcostminus\tau{u_1}{u_2}\lor
    \lambda\tau,\\
\label{eq:variational2}
    \Wmetricl\lambda(u_1,u_2)&=
    \max_{\tau>0}\Wcostplus\tau{u_1}{u_2}\land
    \lambda\tau.
\end{align}
\end{lemma}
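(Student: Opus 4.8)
The plan is to fix $\xv_1,\xv_2\in\Xsp$ and reduce everything to a one-dimensional monotonicity analysis. Set $f(\tau):=\Wcost\tau{\xv_1}{\xv_2}$, which by \eqref{e:decreasing} is a finite-valued nonincreasing function on $(0,+\infty)$. Recalling \eqref{eq:plusminus}, the quantities $\underline f(\tau):=\Wcostminus\tau{\xv_1}{\xv_2}=\sup_{s>\tau}f(s)$ and $\overline f(\tau):=\Wcostplus\tau{\xv_1}{\xv_2}=\inf_{s<\tau}f(s)$ are precisely the right and left limits of $f$ at $\tau$, so that $\underline f(\tau)\le f(\tau)\le\overline f(\tau)$. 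The geometric content of \eqref{e:Wmetric-def} is that $\rho:=\Wmetricl\lambda(\xv_1,\xv_2)/\lambda=\inf S$, where $S:=\{r\ge0:f(r)\le\lambda r\}$ is the set on which the nonincreasing graph of $f$ lies below the increasing line $r\mapsto\lambda r$. I would first record the one structural fact that drives the whole proof: $S$ is an \emph{up-set} (if $r\in S$ and $r'>r$ then $f(r')\le f(r)\le\lambda r<\lambda r'$, so $r'\in S$), and dually its complement is a down-set; moreover, since $f$ is finite and nonincreasing while $\lambda r\to+\infty$, the set $S$ is nonempty and $\rho<+\infty$.

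For \eqref{e:upperd}, the third equivalence is immediate from $\underline f(r)=\sup_{\tau>r}f(\tau)$. For the first, if $\rho\le r$ then every $\tau>r$ lies in the up-set $S$, so $f(\tau)\le\lambda\tau$, and letting $\tau\downarrow r$ gives $\underline f(r)\le\lambda r$; conversely, if $\underline f(r)\le\lambda r$ then $f(\tau)\le\underline f(r)\le\lambda r<\lambda\tau$ for all $\tau>r$, so $(r,+\infty)\subset S$ and hence $\rho\le r$. The dual chain \eqref{e:lowerd} is proved symmetrically using that $S^{c}$ is a down-set: if $\rho\ge r$ then for $\tau<r$ and any $s\in[\tau,r)$ one has $s<\rho$, whence $f(\tau)\ge f(s)>\lambda s$, and letting $s\uparrow r$ yields $f(\tau)\ge\lambda r$, i.e.\ $\overline f(r)\ge\lambda r$; conversely $\overline f(r)\ge\lambda r$ forces $f(\tau)\ge\overline f(r)\ge\lambda r>\lambda\tau$ for $\tau<r$, so no $\tau<r$ belongs to $S$ and $\rho\ge r$. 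Characterization \eqref{e:equivalent} is then the conjunction of \eqref{e:upperd} and \eqref{e:lowerd} applied at the value $r$ determined by $\lambda r=\Wmetricl\lambda(\xv_1,\xv_2)$.

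Finally, for the variational formulae I would evaluate the candidate functions at $\tau=\rho$ and bound them elsewhere, writing $d:=\Wmetricl\lambda(\xv_1,\xv_2)=\lambda\rho$. From \eqref{e:equivalent} we have $\underline f(\rho)\le\lambda\rho\le\overline f(\rho)$, so $\underline f(\rho)\lor\lambda\rho=\lambda\rho=d$ and $\overline f(\rho)\land\lambda\rho=\lambda\rho=d$; thus the $\min$ in \eqref{eq:variational1} and the $\max$ in \eqref{eq:variational2} are attained at $\rho$ with value $d$. For the lower bound in \eqref{eq:variational1}: if $\tau\ge\rho$ then $\lambda\tau\ge d$, while if $\tau<\rho$ then choosing $\tau<s<\rho$ (so $s\notin S$, $f(s)>\lambda s$) and using $\underline f(\tau)\ge f(s)$ one lets $s\uparrow\rho$ to get $\underline f(\tau)\ge\lambda\rho=d$; either way $\underline f(\tau)\lor\lambda\tau\ge d$. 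Symmetrically for \eqref{eq:variational2}: if $\tau\le\rho$ then $\lambda\tau\le d$, while if $\tau>\rho$ then picking $\rho<s<\tau$ (so $s\in S$, $f(s)\le\lambda s$) and using $\overline f(\tau)\le f(s)$ one lets $s\downarrow\rho$ to get $\overline f(\tau)\le\lambda\rho=d$, so $\overline f(\tau)\land\lambda\tau\le d$.

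I expect the only genuinely delicate point to be the systematic translation of the up-set/down-set membership of $S$ into statements about the one-sided limits $\underline f$ and $\overline f$: one must consistently take the auxiliary point $s$ to $\rho$ (not to $\tau$) and keep careful track of which inequalities are strict, since the threshold $\rho$ itself need not belong to $S$. Everything else is a routine consequence of the monotonicity \eqref{e:decreasing} and the definitions \eqref{eq:plusminus}.
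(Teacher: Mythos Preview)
Your proof is correct and follows essentially the same approach as the paper: both arguments reduce to the monotonicity of $\tau\mapsto\Wcost\tau{\xv_1}{\xv_2}$ and the order structure of the set $S=\{r\ge0:\Wcost r{\xv_1}{\xv_2}\le\lambda r\}$, then verify \eqref{e:upperd}--\eqref{e:lowerd} by passing to one-sided limits and deduce \eqref{e:equivalent} as their conjunction. Your explicit ``up-set'' framing and the introduction of $f,\underline f,\overline f$ make the structure a bit more transparent; for the variational formulae you split cases on $\tau\lessgtr\rho$ whereas the paper splits on $\Wcostminus\tau{\xv_1}{\xv_2}\lessgtr\lambda\tau$, but these are dual organisations of the same computation, and both conclude attainment at $\tau=\rho$ via \eqref{e:equivalent}. (Note that, as in the paper, the case $\rho=0$, i.e.\ $\xv_1=\xv_2$, makes the ``$\min$'' in \eqref{eq:variational1} an infimum rather than an attained minimum; this is a cosmetic point shared by both treatments.)
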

\begin{proof}
To check \eqref{e:upperd}
we observe that 
if $\Wmetricl\lambda(\xv_1,\xv_2)\le \lambda r$
then for every
 $r'$ with 
$\tau>r'>r$ we get
$\Wcost\tau{\xv_1}{\xv_2}\le 
\Wcost{r'}{\xv_1}{\xv_2}\le \lambda r'$
so that 
$\Wcost\tau{\xv_1}{\xv_2}\le \lambda r$.
Conversely,
if 
for every $\tau> r$ we have
$\Wcost\tau{\xv_1}{\xv_2}\le \lambda r$ 
we also get 
$\Wcost\tau{\xv_1}{\xv_2}\le \lambda \tau$, 
so that 
$\Wmetricl\lambda(\xv_1,\xv_2)\le \lambda \tau$
and eventually 
$\Wmetricl\lambda(\xv_1,\xv_2)\le \lambda r.$

Concerning characterization \eqref{e:lowerd},
if $\Wmetricl\lambda(\xv_1,\xv_2)\ge \lambda r$
then 
for $\tau<r'<r$ 
we  have 
$\Wcost\tau{\xv_1}{\xv_2}\ge
\Wcost{r'}{\xv_1}{\xv_2}>\lambda r'$,
so that 
$\Wcost\tau{\xv_1}{\xv_2}\ge \lambda r$.
Conversely,
if for $\tau<r$ we have
$\Wcost\tau{\xv_1}{\xv_2}\ge \lambda r>\lambda \tau$, then
we conclude that 
$\Wmetricl\lambda(\xv_1,\xv_2)\ge\lambda \tau$:
since $\tau<r$ is arbitrary we eventually get
$\Wmetricl\lambda(\xv_1,\xv_2)\ge \lambda r$.

Let us now check 
    \eqref{eq:variational1} (notice that the minimum in \eqref{eq:variational1} is attained since $\mathsf a_-$ is decreasing
    and lower semicontinuous).
    If $r:=\Wcostminus{\tau}{\xv_1}{\xv_2}\le \lambda \tau$
    then clearly $\Wmetricl\lambda(\xv_1,\xv_2)\le\lambda \tau$ by \eqref{e:upperd}.
    If $r>\lambda \tau$ then $\Wcostminus{r/\lambda}{\xv_1}{\xv_2}\le r=\lambda (r/\lambda)$ since
    $\tau\mapsto 
    \Wcostminus\tau{\xv_1}{\xv_1}$ is decreasing, 
    so that $\Wmetricl\lambda(\xv_1,\xv_2)\le r.$
    This argument shows 
    that 
    $\Wmetricl\lambda(\xv_1,\xv_2)\le
    \min_{\tau>0}\Wcostminus\tau{\xv_1}{\xv_2}\lor \lambda \tau$.
    On the other hand \eqref{e:equivalent}
    yields that, when $\xv_1\neq\xv_2$, 
    setting 
    $\tau:=\Wmetricl\lambda(\xv_1,\xv_2)/\lambda$
    we have 
    $\Wmetricl\lambda(\xv_1,\xv_2)=
    \Wcostminus\tau{\xv_1}{\xv_2}\lor \lambda\tau$, which provides the equality in \eqref{eq:variational1}.

    A similar argument yields \eqref{eq:variational2}.
\end{proof}
A simple consequence of \eqref{eq:variational1}
and \eqref{eq:variational2} is the joint continuity of the map
$(\lambda,u_1,u_2)\mapsto \Wmetricl\lambda(u_1,u_2)$ in 
$(0,+\infty)\times \Xsp\times \Xsp.$
\begin{remark}
    In view of  the above discussion, we can also 
define $\Wmetricl\lambda$ using a  strict inequality
in \eqref{e:Wmetric-def}:
    \begin{equation}
    \label{e:Wmetric-def2}
        \Wmetricl\lambda(\xv_1,\xv_2)=
        \lambda \inf\Big\{r\ge0:\Wcost r{\xv_1}{\xv_2}< \lambda r\Big\}.
    \end{equation}
\end{remark}
\begin{example}
\label{ex:Wmetric-explicit}
It is interesting to compute $\Wmetricl\lambda$ in the case $\Wcostname$ is induced by 
a metric $d$ on $\Xsp$ 
as in Example \ref{ex:psi-metric}.
We have
\[
    \Wmetricl\lambda(\xv_1,\xv_2)=
       \lambda \inf\Big\{ r\ge0: \,  \Psidens (\tfrac1{r} d(\xv_1,\xv_2) )   \leq 
        \lambda\Big\}\,.
        \] 
        Therefore, if $\Psidens$ is invertible, we conclude that 
        \begin{equation}
        \label{explicit-Wmetric}
    \Wmetricl\lambda(\xv_1,\xv_2)     =  \frac{\lambda}{\Psidens^{-1}(\lambda)} d(\xv_1,\xv_2)\,.
        \end{equation}
\end{example}
\begin{theorem}
    \label{thm:metrizability}
   If $(\Xsp,\Wcostname)$ is an action space,
   then for all $\lambda>0$ the function 
   $\Wmetricl\lambda: \Xsp{\times}\Xsp \to [0,+\infty)$ defined by \eqref{e:Wmetric-def2}
   is a metric in $\Xsp$
    which
    satisfies
    \begin{equation}
        \label{e:useful-estimate}
        \Wmetricl\lambda(\xv_1,\xv_2)
        \le \lambda\tau\lor \Wcost\tau{\xv_1}{\xv_2}\le 
        \lambda \tau+\Wcost\tau{\xv_1}{\xv_2}
        \quad\text{for every }
        \tau>0,\ \xv_1,\xv_2\in \Xsp,
    \end{equation}
    and
    \begin{equation}
        \label{e:usefule-estimate2}
        \Wmetricl\lambda(\xv_1,\xv_2)<\lambda r\quad\Rightarrow\quad
        \Wcost{r}{\xv_1}{\xv_2}< \lambda r.
    \end{equation}
    In particular
    $\Wmetricl\lambda$ metrizes the uniform structure
    $\mathfrak U$ 
    (and a fortiori the topology $\Wtop$):
    setting
    \begin{equation}
        \label{eq:d-entourages}
        \mathrm B_\lambda(r):=
        \Big\{(\xv_1,\xv_2)\in \Xsp\times\Xsp:
        \Wmetricl\lambda (\xv_1,\xv_2)<\lambda r\Big\}
    \end{equation}
    we have:
    \begin{enumerate}
        \item for every $\tau,c>0$
        choosing $r\in (0,\tau\land (c/\lambda)]$
        we have 
        $\mathrm B_\lambda(r)
        \subset \VNei\tau c$;
        \item for every $r>0$
        choosing $\tau,c\in (0,r]$
        we have
        $\VNei\tau c\subset 
        \mathrm B_\lambda(r).$
    \end{enumerate}
\end{theorem}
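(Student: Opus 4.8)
The plan is to verify in turn that $\Wmetricl\lambda$ is a genuine metric, then to establish the two quantitative estimates \eqref{e:useful-estimate}--\eqref{e:usefule-estimate2}, and finally to read off the comparison of entourages. Throughout I would lean on the monotonicity \eqref{e:decreasing} and on the equivalent characterizations \eqref{e:upperd}--\eqref{e:equivalent} of $\Wmetricl\lambda$ proven in the preceding Lemma. First I would note that $\Wmetricl\lambda$ is finite and nonnegative: since $\tau\mapsto \Wcost\tau{\xv_1}{\xv_2}$ is decreasing hence bounded, while $\lambda r\to+\infty$, the set $\{r\ge0:\Wcost r{\xv_1}{\xv_2}<\lambda r\}$ contains all sufficiently large $r$, so the infimum in \eqref{e:Wmetric-def2} is a finite element of $[0,+\infty)$. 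For non-degeneracy, if $\xv_1=\xv_2$ then $\Wcost r{\xv_1}{\xv_2}=0<\lambda r$ for all $r>0$ by \eqref{e:psi2}, whence $\Wmetricl\lambda(\xv_1,\xv_2)=0$; conversely, if $\xv_1\neq\xv_2$, fix any $s>0$ and set $c_s:=\Wcost s{\xv_1}{\xv_2}>0$ (again \eqref{e:psi2}): were $\Wmetricl\lambda(\xv_1,\xv_2)=0$, there would be $r_n\downarrow0$ with $\Wcost{r_n}{\xv_1}{\xv_2}<\lambda r_n$, but $r_n<s$ forces $\Wcost{r_n}{\xv_1}{\xv_2}\ge c_s>0$ by \eqref{e:decreasing}, contradicting $\lambda r_n\to0$. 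Symmetry is immediate from \eqref{symmetry}.

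The crux, and the step I expect to be the main obstacle, is the triangle inequality, since it is here that the concatenation structure must be correctly converted through the characterization \eqref{e:upperd}. Writing $a:=\Wmetricl\lambda(\xv_1,\xv_2)$ and $b:=\Wmetricl\lambda(\xv_2,\xv_3)$, I would apply \eqref{e:upperd} with $r=a/\lambda$ and with $r=b/\lambda$ to obtain $\Wcost{\tau_1}{\xv_1}{\xv_2}\le a$ for every $\tau_1>a/\lambda$ and $\Wcost{\tau_2}{\xv_2}{\xv_3}\le b$ for every $\tau_2>b/\lambda$. The concatenation inequality \eqref{e:psi3} then gives $\Wcost{\tau_1+\tau_2}{\xv_1}{\xv_3}\le a+b$, and since every $\tau>(a+b)/\lambda$ splits as $\tau_1+\tau_2$ with $\tau_1>a/\lambda$, $\tau_2>b/\lambda$, a final application of \eqref{e:upperd} (with $r=(a+b)/\lambda$) yields $\Wmetricl\lambda(\xv_1,\xv_3)\le a+b$.

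Next I would prove the two estimates by elementary monotonicity. For \eqref{e:useful-estimate}, fix $\tau>0$, put $M:=\lambda\tau\lor\Wcost\tau{\xv_1}{\xv_2}$ and $r:=M/\lambda\ge\tau$; then \eqref{e:decreasing} gives $\Wcost r{\xv_1}{\xv_2}\le\Wcost\tau{\xv_1}{\xv_2}\le M=\lambda r$, so $r$ lies in the set defining $\Wmetricl\lambda$ through \eqref{e:Wmetric-def}, whence $\Wmetricl\lambda(\xv_1,\xv_2)\le\lambda r=M$; the bound $M\le\lambda\tau+\Wcost\tau{\xv_1}{\xv_2}$ is trivial (this step is also immediate from the variational formula \eqref{eq:variational1}). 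For \eqref{e:usefule-estimate2}, if $\Wmetricl\lambda(\xv_1,\xv_2)<\lambda r$ then by \eqref{e:Wmetric-def2} there is $s<r$ with $\Wcost s{\xv_1}{\xv_2}<\lambda s$, and monotonicity gives $\Wcost r{\xv_1}{\xv_2}\le\Wcost s{\xv_1}{\xv_2}<\lambda s<\lambda r$.

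Finally I would deduce the comparison of the two bases of $\mathfrak U$. For inclusion (1), given $\tau,c>0$ and $r\in(0,\tau\land(c/\lambda)]$, any $(\xv_1,\xv_2)\in\mathrm{B}_\lambda(r)$ satisfies $\Wcost r{\xv_1}{\xv_2}<\lambda r\le c$ by \eqref{e:usefule-estimate2}, and then $\Wcost\tau{\xv_1}{\xv_2}\le\Wcost r{\xv_1}{\xv_2}<c$ by \eqref{e:decreasing} since $\tau\ge r$, i.e.\ $(\xv_1,\xv_2)\in\VNei\tau c$. For inclusion (2), given $r>0$ it suffices to choose $\tau<r$ and $c\le\lambda r$: if $(\xv_1,\xv_2)\in\VNei\tau c$ then $\lambda\tau<\lambda r$ and $\Wcost\tau{\xv_1}{\xv_2}<c\le\lambda r$, so \eqref{e:useful-estimate} gives $\Wmetricl\lambda(\xv_1,\xv_2)\le\lambda\tau\lor\Wcost\tau{\xv_1}{\xv_2}<\lambda r$, i.e.\ $(\xv_1,\xv_2)\in\mathrm{B}_\lambda(r)$. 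These two inclusions exhibit $\{\mathrm{B}_\lambda(r)\}_{r>0}$ and $\{\VNei\tau c\}_{\tau,c>0}$ as mutually refining bases, hence they generate the same filter $\mathfrak U$, so $\Wmetricl\lambda$ metrizes the uniform structure $\mathfrak U$ and a fortiori the topology $\Wtop$.
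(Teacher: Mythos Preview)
Your proof is correct and follows essentially the same route as the paper's: both derive \eqref{e:useful-estimate} and \eqref{e:usefule-estimate2} from the defining/variational formulae for $\Wmetricl\lambda$, and both obtain the triangle inequality from the characterization \eqref{e:upperd} together with concatenation (the paper packages this step via $\mathsf a_-$ and Proposition~\ref{prop:maybe-useful}, which unwinds to exactly your computation). Your treatment of inclusion~(2) with the strict choice $\tau<r$ is a harmless refinement of the stated condition and is precisely what is needed for the metrization claim.
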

\begin{proof}
    \eqref{e:useful-estimate} is an 
    trivial consequence of \eqref{eq:variational1}. In particular, 
    the above estimate shows 
    that $\Wmetricl\lambda$ takes
    finite values.
    Similarly, 
    \eqref{e:usefule-estimate2} follows immediately from \eqref{eq:variational2}.

    Properties (1) and (2) immediately follow
    by \eqref{e:usefule-estimate2}
    and \eqref{e:useful-estimate}
    respectively.
    
    Let us now check that $\Wmetric$ is a metric.
    Clearly $\Wmetric$ is symmetric and satisfies $\Wmetric(\xv,\xv)=0$.
   Property \eqref{e:usefule-estimate2}
    also shows that $\Wmetric(\xv_1,\xv_2)=0$
    implies $\xv_1=\xv_2.$

    Concerning the triangle inequality,
    let
    $r_1:=\Wmetricl\lambda(\xv_1,\xv_2)$
    and $r_2:=\Wmetricl\lambda(\xv_2,\xv_3)$,
    so that 
    $\Wcostminus{r_1}{\xv_1}{\xv_2}\le \lambda r_1$,
    $\Wcostminus{r_2}{\xv_2}{\xv_3}\le \lambda r_2$ by \eqref{e:upperd}.
    Recalling Proposition 
    \ref{prop:maybe-useful} we get
    $\Wcostminus{r_1+r_2}{\xv_1}{\xv_3}
    \le\lambda r_1+\lambda r_2$
    and therefore $\Wmetricl\lambda(\xv_1,\xv_3)\le 
    r_1+r_2.$
\end{proof}
 It is interesting to notice that 
passing to the limit as $\tau\downarrow0$ in 
\eqref{e:useful-estimate}
we get for all $\lambda>0$ and $u_1,\, u_2 \in \Xsp$
\begin{equation}
\label{e:boh}
    \Wmetricl\lambda({\xv_1},{\xv_2})\le  
    \lambda {\Wcostsupname}(u_0,u_1),
\end{equation}
 (cf.\ \eqref{a-sup} for the definition of
$\Wcostsupname$).

Thanks to the previous Theorem
we can use the metric $\Wmetric$
(or, equivalently, any of the equivalent
metrics $\Wmetricl\lambda$)
to characterize \emph{completeness}. 
\begin{corollary}
    The action space  
     $(\Xsp,\Wcostname)$
is \emph{complete}
if and only if 
$(\Xsp,\Wmetric)$
is a complete metric space.
\end{corollary}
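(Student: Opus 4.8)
The plan is to reduce everything to Theorem \ref{thm:metrizability}, which already establishes that the metric $\Wmetric=\Wmetricl1$ metrizes the uniform structure $\mathfrak U$: the only work left is to check that the two natural notions of Cauchy sequence coincide, after which both completeness statements become tautologically equivalent.

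First I would prove that a sequence $(\xv_n)_n$ satisfies the $\mathfrak U$-Cauchy condition \eqref{Cauchy} if and only if it is Cauchy for $\Wmetric$. For the direction from $\mathfrak U$ to $\Wmetric$, I fix $r>0$ and apply \eqref{Cauchy} with $\tau=c=r$ to obtain $\bar n$ with $(\xv_n,\xv_m)\in\VNei rr$ for all $n,m\ge\bar n$; since property (2) of Theorem \ref{thm:metrizability} (taken with $\lambda=1$, so that $\tau=c=r\in(0,r]$) gives $\VNei rr\subset\mathrm B_1(r)$, and $\mathrm B_1(r)=\{(\xv_1,\xv_2):\Wmetric(\xv_1,\xv_2)<r\}$, I conclude $\Wmetric(\xv_n,\xv_m)<r$. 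For the reverse direction, I fix $\tau,c>0$, pick $r\in(0,\tau\land c]$, and use metric Cauchyness to find $\bar n$ with $\Wmetric(\xv_n,\xv_m)<r$, i.e.\ $(\xv_n,\xv_m)\in\mathrm B_1(r)$; property (1) of Theorem \ref{thm:metrizability} then yields $\mathrm B_1(r)\subset\VNei\tau c$, whence \eqref{Cauchy} holds.

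Next I would note that the two notions of convergence also agree: by Proposition \ref{prop:coincidence-topologies} the topology induced by $\mathfrak U$ is exactly $\Wtop$, and by Theorem \ref{thm:metrizability} this is the topology of the metric $\Wmetric$; hence $\xv_n\to\xv$ in the uniform structure if and only if $\Wmetric(\xv_n,\xv)\to0$. Putting the two equivalences together, the statement ``every $\mathfrak U$-Cauchy sequence converges in $\Wtop$'' holds if and only if ``every $\Wmetric$-Cauchy sequence converges in $(\Xsp,\Wmetric)$,'' which is precisely the assertion that $(\Xsp,\Wcostname)$ is complete if and only if $(\Xsp,\Wmetric)$ is a complete metric space.

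There is no genuine obstacle here, since Theorem \ref{thm:metrizability} does all the heavy lifting; the only thing requiring attention is the correct choice of the parameters $\tau,c,r$ and keeping track of the direction of the two inclusions (1)--(2) in that theorem. This is exactly why it is preferable to quote the explicit comparisons between $\mathrm B_\lambda(r)$ and $\VNei\tau c$ rather than to argue abstractly from the countable base $\{\VNei{2^{-n}}{2^{-n}}\}_n$ of $\mathfrak U$.
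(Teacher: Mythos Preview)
Your proof is correct and is exactly the argument the paper leaves implicit: the corollary is stated without proof, as an immediate consequence of Theorem \ref{thm:metrizability}, and your write-up simply spells out the natural details (matching Cauchy sequences and convergence via the inclusions (1)--(2) between $\mathrm B_1(r)$ and $\VNei\tau c$).
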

\begin{example}[Completeness for action costs induced by a metric]
\label{ex:completeness-1}
\upshape
Let $\Wcostname$ be induced by a metric $d$  as in Example \ref{ex:psi-metric}, with an invertible $\uppsi$: then, by virtue of 
\eqref{explicit-Wmetric} it is straightforward to see that $(\Xsp,\Wmetric)$ is complete if and only if $(\Xsp,d)$ is complete.
\end{example}
\begin{example}[Completeness for linear combinations  of actions]
\label{ex:completeness-2}
Let $\Xsp$ be endowed with finitely many costs $\Wcostname_i$, $i \in I$.
Recall (cf.\ Example \ref{ex:2.8}) that,
for any family $(\teta_i)_{i\in I}$ of  positive coefficients,
$\Wcostname = \sum_i \theta_i \Wcostname_i$ is again an action cost on $\Xsp$. It is immediate to check that
the action space  
     $(\Xsp,\Wcostname)$ is complete if and only if  the spaces  
     $(\Xsp,\Wcostname_i)$
are complete for all $i\in I$.
\par
In particular, in view of Example \ref{ex:completeness-1},  if $\Xsp$ is endowed with two metrics $d_1$ and $d_2$, and complete w.r.t.\ both, 
then the action cost
$
\Wcost \tau uw = \tau \psidens{1}\left( \frac{\metr{1}uw}\tau \right)+ \tau \psidens{2}\left( \frac{\metr{2}uw}\tau \right) $
gives rise to a complete space.
\end{example}
In fact, in view of Theorem \ref{thm:metrizability}
 all the topological 
notions such  as convergence of sequences 
or continuity of functions
can be given in terms of $\Wmetric$. 
We collect the obvious definitions in
the following statements, 
\begin{corollary}
A sequence $(\xv_n)_n\subset \Xsp$ converges to some $\xv$ in the $\Wtop$-topology, and write
\[
\xv_n \toW \xv \text{ as $n \to \infty$,} \quad \text{or} \quad 
 \Wcostname\text{-}\lim_{n\to\infty} \xv_n = \xv, 
\]
if 
\[
\forall\, \tau>0, \, c >0 \quad \exists\, \bar n \in \N: \quad \forall\, n 
\geq \bar n\, \qquad \xv_n \in \Nei{\xv}{\tau}{c}
\,,
\]
or, equivalently, if
\begin{equation}
    \label{e:convergence}
    \lim_{n\to\infty}\Wcost\tau{\xv_n}\xv=0
    \quad\text{for every }\tau>0.
\end{equation}
 A sequence $(\xv_n)_n\subset \Xsp$  is  a \emph{Cauchy sequence}  
 in the $\mathfrak U$-uniform structure 
if it enjoys the following property:
\begin{equation}
\label{Cauchy-rewritten}
\forall\, \tau>0, \, c >0 \quad \exists\, \bar n \in \N:\quad 
\forall\, n,\, m \geq \bar n\,: \quad  (\xv_n,\xv_m) \in \TuNei{\tau}{c};
\end{equation}
equivalently, if
\begin{equation}
    \label{eq:Cauchy2}
    \lim_{m,n\to\infty}\Wcost{\tau}{\xv_m}{\xv_n}=0
    \quad\text{for every }\tau>0.
\end{equation}
\end{corollary}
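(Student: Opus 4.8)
The plan is to obtain both equivalences by directly unwinding the definitions of $\toW$-convergence in the topology $\Wtop$ and of Cauchyness in the uniform structure $\mathfrak{U}$; no new construction is required, and the entire content amounts to recognising that the numerical conditions on $\Wcost{\tau}{\xv_n}{\xv}$ (resp.\ $\Wcost{\tau}{\xv_m}{\xv_n}$) are mere rephrasings of eventual membership in the basic neighbourhoods $\Nei{\xv}{\tau}{c}$ (resp.\ in the basic entourages $\VNei{\tau}{c}$).

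For the convergence part, I would start from the fact that $\xv_n \toW \xv$ means, by definition of convergence in a topological space, that $\xv_n$ eventually belongs to every neighbourhood $U \in \mathscr{U}(\xv)$. By \eqref{e:base} every such $U$ contains some $\Nei{\xv}{\tau}{c}$, and each $\Nei{\xv}{\tau}{c}$ is itself a neighbourhood of $\xv$; hence it is equivalent to require eventual membership only in the basic neighbourhoods, which is precisely the first displayed condition. The second step is purely notational: since $\xv_n \in \Nei{\xv}{\tau}{c}$ reads $\Wcost{\tau}{\xv_n}{\xv} < c$, the statement ``for all $\tau, c > 0$, eventually $\Wcost{\tau}{\xv_n}{\xv} < c$'' is, after splitting the two universal quantifiers, exactly the assertion that $\Wcost{\tau}{\xv_n}{\xv} \to 0$ for each fixed $\tau > 0$, i.e.\ \eqref{e:convergence}.

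For the Cauchy part there is even less to prove, since \eqref{Cauchy-rewritten} is verbatim the defining property \eqref{Cauchy} of a Cauchy sequence in the $\mathfrak{U}$-uniform structure. It then suffices to substitute $(\xv_n, \xv_m) \in \VNei{\tau}{c} \Leftrightarrow \Wcost{\tau}{\xv_n}{\xv_m} < c$ and to reorganise the quantifiers as above, which yields \eqref{eq:Cauchy2}.

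The only step that is not a pure triviality --- and thus the closest thing to an obstacle --- is the reduction from ``eventually in every neighbourhood'' to ``eventually in every basic neighbourhood'', i.e.\ the standard principle that a neighbourhood base already characterises convergence; this is immediate from \eqref{e:base} and from Proposition \ref{prop:topology}. Should one prefer to avoid the abstract topology, the same conclusions follow from the metric $\Wmetric$ of Theorem \ref{thm:metrizability}: the implication $\Wmetric(\xv_n, \xv) < r \Rightarrow \Wcost{r}{\xv_n}{\xv} < r$ of \eqref{e:usefule-estimate2}, together with the bound $\Wmetric(\xv_n, \xv) \le \tau \lor \Wcost{\tau}{\xv_n}{\xv}$ of \eqref{e:useful-estimate} and the monotonicity \eqref{e:decreasing} in $\tau$, yields $\Wmetric(\xv_n, \xv) \to 0$ if and only if $\Wcost{\tau}{\xv_n}{\xv} \to 0$ for every $\tau > 0$, and analogously for the Cauchy condition; the direct definitional route, however, is the shorter one.
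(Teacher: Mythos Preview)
Your proposal is correct and matches the paper's own treatment: the paper does not give a proof at all, introducing the corollary as ``collect[ing] the obvious definitions'' that follow from the metrizability Theorem \ref{thm:metrizability}, and your argument simply makes explicit the definitional unwinding that the paper leaves implicit. Both your direct route via the neighbourhood base and your alternative via $\Wmetric$ are in the spirit of the paper's remark.
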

%
%
 We can now easily discuss
the continuity of $\Wcostname$
and the lower-upper semicontinuity
of $\Wcostname_\pm$
 (recall \eqref{eq:plusminus} for the definition of $\Wcostname_{\pm}$). 
\begin{proposition}
    \label{prop:A-continuity}
    Let $(u_n)_n,(v_n)_n$
    be two sequences in $\Xsp$ 
    converging to $u,v$ in the $\Wtop$-topology
    and let $(\tau_n)_n$
    be a sequence in $(0,+\infty)$
    converging to $\tau>0$.
    We have
    \begin{equation}
        \label{eq:limits}
        \Wcostminus\tau uv
        \le \liminf_{n\to\infty}
        \Wcostminus {\tau_n}{u_n}{v_n}
        \le 
        \limsup_{n\to\infty}
        \Wcostplus {\tau_n}{u_n}{v_n}
        \le \Wcostplus\tau uv.
    \end{equation}
    In particular, 
    $\Wcostname_-$ (resp.~$\Wcostname_+$)
    is lower (resp.~upper) semicontinuous
    in $(0,+\infty)\times \Xsp\times \Xsp$
    with respect to the canonical product topology;
    if $\Wcostname$
    satisfies  property \eqref{e:continuity1},
    then it is continuous.
\end{proposition}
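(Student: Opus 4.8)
My plan is to prove the two outer inequalities of \eqref{eq:limits} separately, since the middle one is immediate: for each $n$ one has $\Wcostminus{\tau_n}{u_n}{v_n}\le\Wcostplus{\tau_n}{u_n}{v_n}$ (these are evaluated at the same triple), whence $\liminf_n\Wcostminus{\tau_n}{u_n}{v_n}\le\limsup_n\Wcostplus{\tau_n}{u_n}{v_n}$. Both remaining inequalities rest on the same device: a double application of the concatenation inequality \eqref{e:psi3} that inserts the moving endpoints $u_n,v_n$, together with the fact that $\Wtop$-convergence means $\Wcost\delta{u_n}u\to0$ and $\Wcost\delta{v_n}v\to0$ for every \emph{fixed} $\delta>0$ (cf.\ \eqref{e:convergence}, using the symmetry \eqref{symmetry} to exchange the arguments). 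The only subtle point is the bookkeeping of the time slices.

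For the upper estimate (upper semicontinuity of $\mathsf a_+$), I would fix $\sigma<\tau$ and choose $\delta_1,\delta_2>0$ with $\delta_1+\delta_2<\tau-\sigma$. Since $\tau_n\to\tau$, for $n$ large one has $\delta_1+\sigma+\delta_2<\tau_n$, and then by monotonicity \eqref{e:decreasing} and two applications of \eqref{e:psi3} along the path $u_n\to u\to v\to v_n$,
\[
\Wcostplus{\tau_n}{u_n}{v_n}\le\Wcost{\delta_1+\sigma+\delta_2}{u_n}{v_n}\le\Wcost{\delta_1}{u_n}u+\Wcost{\sigma}uv+\Wcost{\delta_2}v{v_n}.
\]
Letting $n\to\infty$ makes the two lateral terms vanish, giving $\limsup_n\Wcostplus{\tau_n}{u_n}{v_n}\le\Wcost\sigma uv$; taking the infimum over $\sigma<\tau$ and recalling the definition of $\mathsf a_+$ yields the rightmost inequality of \eqref{eq:limits}.

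For the lower estimate (lower semicontinuity of $\mathsf a_-$), I would fix $\rho>\tau$ and pick $\delta_1,\delta_2>0$ with $\delta_1+\delta_2<\rho-\tau$, so that $\rho-\delta_1-\delta_2>\tau_n$ for $n$ large. Concatenating along $u\to u_n\to v_n\to v$,
\[
\Wcost\rho uv\le\Wcost{\delta_1}u{u_n}+\Wcost{\rho-\delta_1-\delta_2}{u_n}{v_n}+\Wcost{\delta_2}{v_n}v\le\Wcost{\delta_1}u{u_n}+\Wcostminus{\tau_n}{u_n}{v_n}+\Wcost{\delta_2}{v_n}v,
\]
where the last step uses $\Wcost{\rho-\delta_1-\delta_2}{u_n}{v_n}\le\Wcostminus{\tau_n}{u_n}{v_n}$, valid since $\rho-\delta_1-\delta_2>\tau_n$. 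Passing to the $\liminf$ in $n$ removes the lateral terms and produces $\Wcost\rho uv\le\liminf_n\Wcostminus{\tau_n}{u_n}{v_n}$; taking the supremum over $\rho>\tau$ gives the leftmost inequality of \eqref{eq:limits}.

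Finally, the semicontinuity assertions follow from \eqref{eq:limits} because $\Wtop$ is metrizable by Theorem \ref{thm:metrizability}, so that sequential lower/upper semicontinuity on the product space $(0,+\infty)\times\Xsp\times\Xsp$ coincides with the topological one. If moreover \eqref{e:continuity1} holds, then $\mathsf a_-=\Wcostname=\mathsf a_+$ identically (recall $\mathsf a_-\le\Wcostname\le\mathsf a_+$), and the chain \eqref{eq:limits} collapses to $\lim_n\Wcost{\tau_n}{u_n}{v_n}=\Wcost\tau uv$, i.e.\ continuity. The one genuine difficulty is the time budgeting: one must reserve a central slice strictly shorter than $\tau$ (for $\mathsf a_+$), respectively strictly longer than $\tau_n$ (for $\mathsf a_-$), while keeping the lateral slices $\delta_1,\delta_2$ fixed and positive, so that their costs tend to $0$ by \eqref{e:convergence}; everything else is a routine passage to the limit.
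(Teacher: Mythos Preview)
Your proof is correct and follows essentially the same route as the paper: a three-term concatenation with fixed lateral slots $\delta_1,\delta_2$ (the paper takes $\delta_1=\delta_2=\eps$), followed by $n\to\infty$ and then optimization over the central time. Your lower estimate is in fact slightly more carefully written than the paper's, since you explicitly invoke $\rho-\delta_1-\delta_2>\tau_n$ to pass from $\Wcost{\rho-\delta_1-\delta_2}{u_n}{v_n}$ to $\Wcostminus{\tau_n}{u_n}{v_n}$, whereas the paper concatenates with the plain $\Wcost{\tau_n}{u_n}{v_n}$ and then asserts the conclusion for $\mathsf a_-$ without making that step explicit.
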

\begin{proof}
    Let us select $\tau'<\tau$ 
    and $\eps<\frac12(\tau-\tau').$
    For $n$ sufficiently large we can
    assume that $\tau_n>\tau'+2\eps$
    so that 
    the concatenation property 
    and the monotonicity of $\Wcostname_+$ yield
    \begin{align*}
        \Wcostplus {\tau_n}{u_n}{v_n}
        \nc
        &\le \Wcost {\tau'+2\eps}{u_n}{v_n}
        \le 
        \Wcost {\eps}{u_n}{u}+
        \Wcost {\tau'}{u}{v}+
        \Wcost {\eps}{v}{v_n}
    \end{align*}
    Taking the $\limsup$ of the left hand side
    as $n\to\infty$ and using the 
    convergence property 
    \eqref{e:convergence} we get
    \begin{equation*}
        \limsup_{n\to\infty}
        \Wcostplus {\tau}{u_n}{v_n}
        \le \Wcost {\tau'}{u}{v}
    \end{equation*}
    Since $\tau'<\tau$ is arbitrary
    we obtain 
    $\limsup_{n\to\infty}
        \Wcostplus {\tau_n}{u_n}{v_n}
        \le \Wcostplus\tau uv$.

    Similarly, selecting $\tau''>\tau$
    and $\eps>0$ such that $\tau''-2\eps>\tau$
    we get for $n$ sufficiently large
    \begin{align*}
     \Wcostminus {\tau''}{u}{v}\nc
        \le \Wcost {\tau_n+2\eps}{u}{v}
        \le \Wcost {\eps}{u}{u_n} 
        +\Wcost {\tau_n}{u_n}{v_n}+
        \Wcost {\eps}{v_n}{v}.
    \end{align*}
    Taking the $\liminf$ of the right-hand side
    and then the supremum with respect to $\tau''$ we obtain 
    $\liminf_{n\to\infty}
        \Wcostminus {\tau_n}{u_n}{v_n}
        \ge \Wcostminus\tau uv$.
\end{proof}

 It is interesting
to consider the 
interaction of 
 action costs 
with an auxiliary topology $\sigma$ on
$\Xsp$. 
This is useful 
when metric costs are involved in a Minimizing Movement scheme
and, typically, the driving energy functional has compact sublevels w.r.t.~$\sigma$. 
Our next result shows that
(sequential) lower semicontinuity of $\Wcostname_-$ 
w.r.t.\ $\sigma$ leads to (sequential) lower semicontinuity of $\Wmetric$.  


\begin{proposition}
Let $(\Xsp,\Wcostname)$ be an action space 
and let $\sigma$ be a Hausdorff topology 
on $\Xsp$ for which 
$\Wcostname_-$ is sequentially lower semicontinuous, i.e.~
for  all  sequences $(\xv_i^n)_{n\in \N}$,
 $\sigma$-converging to 
$\xv_i$, $i=0,1$, we have
\begin{equation}\label{lower-semicont}
\liminf_{n \to \pinfty} \Wcostname_-(\tau,\xv_0^n, \xv_1^n) \geq \Wcostname_-(\tau,\xv_0,\xv_1)
\quad\text{for every }\tau>0.
 	\end{equation}
Then also $\Wmetric$ is 
$\sigma$-sequentially lower semicontinuous,
i.e.\
for  all  sequences $(\xv_i^n)_{n\in \N}$,
 $\sigma$-converging to 
$\xv_i$, $i=0,1$, we have
\begin{equation}\label{d-lower-semicont}
\liminf_{n \to \pinfty} \Wmetric(\xv_0^n, \xv_1^n) \geq \Wmetric(\xv_0,\xv_1).
 	\end{equation}
In particular,
every 
$\sigma$-sequentially compact subset $K\subset \Xsp$ 
is $\Wcostname$-complete and
every $\Wtop$-convergent sequence
is also $\sigma$-convergent.
\end{proposition}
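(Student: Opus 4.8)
The plan is to reduce everything to the equivalent metric $\Wmetric$ furnished by Theorem~\ref{thm:metrizability}, which encodes the $\Wtop$-topology, the uniform structure $\mathfrak U$, and the notion of completeness. First I would establish the metric lower semicontinuity \eqref{d-lower-semicont} through the characterization \eqref{e:upperd}, namely $\Wmetric(\xv_0,\xv_1)\le r \Leftrightarrow \Wcostminus r{\xv_0}{\xv_1}\le r$. Given $\xv_i^n\to\xv_i$ in $\sigma$, I set $\ell:=\liminf_n\Wmetric(\xv_0^n,\xv_1^n)$, which is finite because $\Wmetric$ is finite-valued, and pass to a (not relabeled) subsequence along which $\Wmetric(\xv_0^n,\xv_1^n)\to\ell$ (this subsequence still $\sigma$-converges to $\xv_i$). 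For a fixed $\rho>\ell$ one has $\Wmetric(\xv_0^n,\xv_1^n)\le\rho$ for $n$ large, hence $\Wcostminus\rho{\xv_0^n}{\xv_1^n}\le\rho$ by \eqref{e:upperd}; applying the hypothesis \eqref{lower-semicont} with $\tau=\rho$ gives $\Wcostminus\rho{\xv_0}{\xv_1}\le\liminf_n\Wcostminus\rho{\xv_0^n}{\xv_1^n}\le\rho$, so $\Wmetric(\xv_0,\xv_1)\le\rho$ again by \eqref{e:upperd}, and letting $\rho\downarrow\ell$ yields \eqref{d-lower-semicont}. The only point demanding care is that \eqref{lower-semicont} is phrased for each fixed $\tau$, so $\rho$ must be held fixed while the semicontinuity is invoked and only afterwards sent down to $\ell$.

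Next I would deduce the $\Wcostname$-completeness of a $\sigma$-sequentially compact $K$. Let $(\xv_n)\subset K$ be $\Wmetric$-Cauchy; by $\sigma$-sequential compactness I extract $\xv_{n_k}\to\xv\in K$ in $\sigma$. Freezing the first entry at the constant value $\xv_{n_k}$ and letting the second run along $\xv_{n_j}\to\xv$, the lower semicontinuity just proved gives $\Wmetric(\xv_{n_k},\xv)\le\liminf_{j}\Wmetric(\xv_{n_k},\xv_{n_j})$, whose right-hand side becomes arbitrarily small once $n_k$ is large, by the Cauchy property. Thus $\xv_{n_k}\to\xv$ in $\Wmetric$; since a Cauchy sequence possessing a convergent subsequence is itself convergent, $(\xv_n)$ converges to $\xv\in K$. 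Hence $(K,\Wmetric)$ is a complete metric space, which by the Corollary to Theorem~\ref{thm:metrizability} is precisely $\Wcostname$-completeness of $K$.

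Finally, for a sequence $(\xv_n)\subset K$ with $\xv_n\toW\xv$, I would prove $\sigma$-convergence to $\xv$ via the general topological principle that $\xv_n\to\xv$ in $\sigma$ as soon as every subsequence admits a further subsequence that $\sigma$-converges to $\xv$. Given any subsequence, $\sigma$-sequential compactness of $K$ produces a further subsequence $\xv_{n_{k_j}}\to\y$ in $\sigma$ for some $\y\in K$; testing lower semicontinuity of $\Wmetric$ on the constant sequence $\xv$ against $\xv_{n_{k_j}}$ gives $\Wmetric(\xv,\y)\le\liminf_j\Wmetric(\xv,\xv_{n_{k_j}})=0$, the limit being $0$ because $\xv_{n_{k_j}}\to\xv$ in $\Wmetric$, so $\y=\xv$ since $\Wmetric$ is a metric. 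I expect this last clause to be the main subtlety: the assertion must be read relative to a $\sigma$-sequentially compact set, for the unrestricted statement fails — for instance with $\sigma$ discrete and $\Wcostname$ induced by a nontrivial metric, every genuinely (non eventually constant) $\Wtop$-convergent sequence would violate it — and the argument crucially uses the compactness to generate $\sigma$-cluster points that lower semicontinuity then pins down to $\xv$.
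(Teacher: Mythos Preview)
Your proof of \eqref{d-lower-semicont} is precisely the paper's: reduce to showing that $\Wmetric(\xv_0^n,\xv_1^n)\le r$ eventually forces $\Wmetric(\xv_0,\xv_1)\le r$, translate via \eqref{e:upperd} into $\Wcostminus r{\cdot}{\cdot}\le r$, and invoke \eqref{lower-semicont}. The paper then stops, declaring that the two ``in particular'' claims follow without further detail; you instead spell out the standard metric-space arguments for completeness of $K$ and for $\sigma$-convergence of $\Wtop$-convergent sequences, which is fine and fills in what the paper leaves implicit. Your observation that the last clause must be read relative to a $\sigma$-sequentially compact set is well taken---the paper's phrasing is ambiguous on this point, and your discrete-topology counterexample shows the unrestricted reading indeed fails. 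One minor slip: $\ell=\liminf_n\Wmetric(\xv_0^n,\xv_1^n)$ is not guaranteed to be finite merely because $\Wmetric$ takes finite values (the sequence could be unbounded), but the case $\ell=+\infty$ is trivial and the rest of your argument is unaffected.
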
 
\begin{proof}
It is sufficient to prove that 
\eqref{lower-semicont} implies
\eqref{d-lower-semicont}.
For that, let us show
that if
$\Wmetric(\xv_0^n,\xv_1^n)\le r$
definitely, then also
$\Wmetric(\xv_0,\xv_1)\le r.$  From characterization
\eqref{e:upperd} it follows
 that 
$\Wcostminus r{\xv_0^n}{\xv_1^n}\le r$.
Passing to the limit as $n\to\infty$ and 
using 
\eqref{lower-semicont}
we deduce that 
$\Wcostminus r{\xv_0}{\xv_1}\le r$,
so that 
$\Wmetric(\xv_0,\xv_1)\le r$
as well.
\end{proof}
\begin{remark}
    It is worth noticing that sequential lower semicontinuity of $\Wcostname$
    w.r.t.~$\sigma$ implies
    the same property for $\Wcostname_-$.
\end{remark}
    
 \nc
\section{Curves with finite action}
\label{s:4}
In this section we assume
that $(\Xsp,\Wcostname)$ is an action space.
\begin{definition}[Action of a curve]
    Let    
    $u: [a,b]\to \Xsp$.
    The
    $\Wcostname$-action of $u$ is defined by
\begin{equation}
\label{VarW-variation}
\VarW u ab: = \sup \left \{ \sum_{j=1}^M  
\Wcost{t^j - t^{j-1}}{u(t^{j-1})}{u(t^j)} \, : \ (t^j)_{j=0}^M \in \mathscr{P}_f([a,b])   \right\}
\end{equation}
where $\mathscr{P}_f([a,b])$  is the set of all finite partitions of the interval $[a,b]$. 
\end{definition}
This definition is clearly reminiscent of   that of the total variation 
$\Var_{\mathsf d}$  induced by 
a metric $\mathsf d$, i.e.\
\begin{equation}
\label{VarW-metric}
  \mathrm{Var}_{\mathsf d}(u;[a,b]): = \sup \left \{ \sum_{j=1}^M  
\mathsf d(u(t^{j-1}),u(t^j)) \, : \ (t^j)_{j=0}^M \in \mathscr{P}_f([a,b])   \right\}\,.
\end{equation}
Now, thanks to  \eqref{e:useful-estimate}  
we have
a simple estimate
of the   $\mathrm{Var}_{\Wmetricl\lambda}$-variation 
in terms of $\VarWname u$.
\begin{lemma}
    For every 
    curve $u:[a,b]\to\Xsp$
    \begin{equation}
    \label{estimate-variations}
    \mathrm{Var}_{\Wmetricl\lambda}(u;[a,b])
    \le \lambda\big(b-a)+
    \VarW u ab.
\end{equation}
In particular, if $u$
    has finite action 
    $\VarW u ab<\infty$
    then it also has
    finite $\Wmetricl\lambda$-variation.
\end{lemma}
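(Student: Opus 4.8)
The plan is to obtain \eqref{estimate-variations} directly from the termwise estimate \eqref{e:useful-estimate} combined with the very definitions of the two variation-type quantities; no further structural property of $\Wcostname$ is needed beyond what Theorem \ref{thm:metrizability} already provides. The key idea is to exploit the freedom in the time parameter $\tau$ appearing in \eqref{e:useful-estimate} by matching it, subinterval by subinterval, to the mesh of a given partition.

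First I would fix an arbitrary partition $(t^j)_{j=0}^M\in\mathscr{P}_f([a,b])$ and apply \eqref{e:useful-estimate} on each consecutive pair $u(t^{j-1}),u(t^j)$, choosing the time parameter to be precisely the length $\tau=t^j-t^{j-1}$ of that subinterval. This gives, for each $j=1,\dots,M$,
\[
\Wmetricl\lambda(u(t^{j-1}),u(t^j))\le \lambda\,(t^j-t^{j-1})+\Wcost{t^j-t^{j-1}}{u(t^{j-1})}{u(t^j)}.
\]

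Summing over $j=1,\dots,M$, the contribution of the first term on the right telescopes to $\lambda\,(b-a)$, while the sum of the action terms is bounded above by $\VarW u ab$ by the definition \eqref{VarW-variation}. Hence
\[
\sum_{j=1}^M \Wmetricl\lambda(u(t^{j-1}),u(t^j))\le \lambda\,(b-a)+\VarW u ab,
\]
and, since the right-hand side is independent of the chosen partition, taking the supremum over all $(t^j)_{j=0}^M\in\mathscr{P}_f([a,b])$ yields \eqref{estimate-variations}. The concluding assertion that $\VarW u ab<+\infty$ forces finite $\Wmetricl\lambda$-variation then follows at once, as $\lambda(b-a)<+\infty$.

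There is no genuine obstacle in this argument; the only point deserving attention is the alignment of the free parameter $\tau$ in \eqref{e:useful-estimate} with the mesh of the partition, which is exactly what makes the geometric terms telescope into $\lambda(b-a)$. Had one instead frozen a single value of $\tau$ across all subintervals, the bookkeeping would not close, so this matching is the essential (and only) clever step.
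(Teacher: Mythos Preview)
Your proof is correct and matches the paper's approach exactly: the paper does not even write out a formal proof, simply noting that the lemma follows from \eqref{e:useful-estimate}, which is precisely the termwise application you carry out. Your explicit bookkeeping with the partition-dependent choice $\tau=t^j-t^{j-1}$ is the intended (and only natural) way to make this work.
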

Hence, 
assuming completeness of $(\Xsp,\Wcostname)$  we show that
curves of finite $\Wcostname$-action
are \emph{regulated}, and  indeed  have $\BV$-like properties.  
%
%
%
%
%
\begin{proposition}
\label{prop:regulated}
If the action space
$(\Xsp,\Wcostname)$ is complete 
then every curve 
$u:[a,b]\to \Xsp$ 
with finite action $\VarW u ab<+\infty$ satisfies
\begin{subequations}
\label{regulated}
\begin{align}
&
\forall\, t \in (a,b] \quad \exists\, \llim ut: = 
\Wtop\text{-}\lim_{s\uparrow t} u(s),
\\
&
\forall\, t \in [a,b) \quad \exists\, \rlim ut: = 
\Wtop\text{-}\lim_{s\downarrow t} u(s)
\end{align}
\end{subequations}
(we also adopt the convention
$\llim ua := u(a)$ and $\rlim u b: =u(b)$). 
Furthermore, the pointwise  jump set 
\[
\mathrm{J}_u: =\mathrm{J}_u^+ \cup \mathrm{J}_u^-
\qquad \text{with }
\begin{cases}
\mathrm{J}_u^-: =  \{ t \in [a,b]\, : \ \llim u t \neq u(t) \},
\\
 \mathrm{J}_u^-: =  \{ t \in [a,b]\, : \ u(t) \neq 
\rlim u t\}
 \end{cases}
\]
consists of at most countably many points, and the function $\Tvar Au : [a,b] \to [0,+\infty)$, $\Tvar Au(t): = \VarW u at$, has bounded variation.
\end{proposition}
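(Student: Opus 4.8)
The plan is to reduce everything to the metric $\Wmetric$ and then run the classical argument that a curve of finite variation in a complete metric space is regulated with countable jump set; the only genuinely structural input is the additivity of the $\Wcostname$-action over adjacent subintervals, which turns $\Tvar Au$ into a monotone real function.

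First I would establish that $\Tvar Au(t)=\VarW u at$ is nondecreasing, indeed that the action is additive: for $a\le s\le t\le b$ one has $\VarW u at=\VarW u as+\VarW u st$. Superadditivity is immediate, since the union of a partition of $[a,s]$ and a partition of $[s,t]$ is a partition of $[a,t]$ whose associated sum splits as the two sums; as $\VarW u st\ge 0$, this already gives monotonicity of $\Tvar Au$. The reverse inequality follows by refining an arbitrary partition of $[a,t]$ with the extra node $s$: the concatenation inequality \eqref{e:psi3} shows that inserting $s$ can only increase the associated sum, and the refined sum splits across $s$, whence $\VarW u at\le\VarW u as+\VarW u st$. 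A nondecreasing real function on $[a,b]$ is of bounded variation with total variation $\Tvar Au(b)-\Tvar Au(a)=\VarW u ab<+\infty$, which proves the last assertion.

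Next I would prove the existence of the one-sided limits. Fix $t\in(a,b]$. Combining the trivial bound $\Wmetric(u(s),u(s'))\le\mathrm{Var}_{\Wmetric}(u;[s,s'])$ with estimate \eqref{estimate-variations} for $\lambda=1$ on the interval $[s,s']$, and using the additivity just established, we get for $a\le s<s'<t$
\[
\Wmetric(u(s),u(s'))\le (s'-s)+\VarW u s{s'}=(s'-s)+\Tvar Au(s')-\Tvar Au(s).
\]
Since $\Tvar Au$ is monotone and bounded, its left limit at $t$ exists, so the right-hand side tends to $0$ as $s,s'\uparrow t$; hence $(u(s))_{s<t}$ satisfies the Cauchy condition as $s\uparrow t$. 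Because $(\Xsp,\Wcostname)$ is complete, the metric space $(\Xsp,\Wmetric)$ is complete as well, so the limit exists; as $\Wmetric$ metrizes $\Wtop$ (Theorem \ref{thm:metrizability}), this limit is precisely $\llim ut$ in the $\Wtop$-topology. The right limit $\rlim ut$ is obtained by the symmetric argument as $s,s'\downarrow t$.

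For the countability of the jump set I would argue by a counting estimate. Fix $\delta>0$ and set $\mathrm J^{-,\delta}:=\{t\in[a,b]:\Wmetric(\llim ut,u(t))>\delta\}$. Given finitely many points $t_1<\dots<t_k$ in $\mathrm J^{-,\delta}$, for each $i$ choose $s_i\in(t_{i-1},t_i)$ (with $t_0:=a$) so close to $t_i$ that $\Wmetric(u(s_i),u(t_i))>\delta$, which is possible since $u(s)\to\llim u{t_i}$ and $\Wmetric(\llim u{t_i},u(t_i))>\delta$. The displayed estimate then yields $\delta<\Wmetric(u(s_i),u(t_i))\le (t_i-s_i)+\VarW u{s_i}{t_i}$, and summing over the disjoint intervals $[s_i,t_i]\subset[a,b]$ and invoking superadditivity gives $k\delta<(b-a)+\VarW u ab$. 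Hence $\mathrm J^{-,\delta}$ is finite, and $\mathrm J_u^-=\bigcup_{n\ge1}\mathrm J^{-,1/n}$ is at most countable; the same reasoning applies to $\mathrm J_u^+$, so $\mathrm J_u$ is at most countable. The only point requiring real care — and what I regard as the crux — is the additivity (or at least superadditivity) of the action, since it is what makes $\Tvar Au$ a monotone scalar function and thereby unlocks the standard BV machinery via \eqref{estimate-variations}; everything else is a faithful transcription of the metric-space argument.
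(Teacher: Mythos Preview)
Your proof is correct and follows essentially the same route as the paper: reduce to the metric $\Wmetric$ via \eqref{estimate-variations}, then exploit completeness of $(\Xsp,\Wmetric)$ and standard $\mathrm{BV}$ theory. The paper is terser, citing the regulated-curve and countable-jump-set facts as black boxes for $\mathrm{BV}_{\Wmetric}$-curves, whereas you unpack them; moreover, your explicit monotonicity/additivity argument for the action actually addresses a point the paper's proof glosses over, namely why $\Tvar Au$ itself (rather than the $\Wmetric$-variation function $\Tvar Vu$) has bounded variation.
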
 
\begin{proof}
From  $\VarW u ab<+\infty$ we infer that $  \mathrm{Var}_{\Wmetric}(u;[a,b])<+\infty$ via \eqref{estimate-variations}. Since  the metric space
$(\Xsp,\Wmetric)$ is complete, any $u\in \mathrm{BV}_{\Wmetric}([a,b];\Xsp)$ admits left- and right-limits w.r.t.\ the topology induced by $\Wmetric$, whence
 \eqref{regulated}. Furthermore, $\mathrm{J}_u $ coincides with the (analogously defined)  jump set of the bounded variation function
 $\Tvar Vu :[a,b]\to [0,+\infty)$, $\Tvar Vu(t): =  \mathrm{Var}_{\Wmetric}(u;[a,t]) $, and thus $u$ has countably many jump points.
\end{proof}
Our next result shows that, if the cost $\Wcostname$ has local superlinear growth 
according to \eqref{e:superlinear}
for any curve $u$ with finite action 
$u$ is $\Wtop$-continuous at any point.
\begin{proposition}
Suppose that  $(\Xsp,\Wcostname)$ is complete and 
$\Wcostname$  has superlinear local growth.
Then, every curve $u:[a,b]\to \Xsp$ with finite action  $\VarW u ab<+\infty$
is continuous.
\end{proposition}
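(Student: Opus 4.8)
The plan is to argue by contradiction, exploiting that a finite-action curve is already known to be regulated. By Proposition \ref{prop:regulated} the one-sided limits $\llim ut$ and $\rlim ut$ exist at every point in the $\Wtop$-topology, so $u$ fails to be continuous at some $t_0$ precisely when $u(t_0)$ differs from one of its one-sided limits. I would therefore suppose, for contradiction, that there is a point $t_0$ with, say, a right jump $\rlim u{t_0}=z\neq u(t_0)=:y$ (the case of a left jump is entirely symmetric, and the endpoint conventions $\llim ua=u(a)$, $\rlim ub=u(b)$ reduce the analysis at $a,b$ to a single side). The goal is to show that such a jump forces $\VarW uab=+\infty$.

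The crux is to convert the jump into arbitrarily large single-step costs, using the superlinear growth \eqref{e:superlinear}. First I note that, by \eqref{a-sup} and the monotonicity \eqref{e:decreasing} of $\tau\mapsto\Wcost\tau yz$, one has
\[
\sup_{\tau>0}\Wcostminus\tau yz=\sup_{\tau>0}\sup_{\tau''>\tau}\Wcost{\tau''}yz=\Wcostsup yz=+\infty,
\]
since $y\neq z$. Hence, given any $M>0$, I may fix $\tau>0$ with $\Wcostminus\tau yz>M$. The key estimate is then a lower bound for $\Wcost\tau y{u(s)}$ as $s\downarrow t_0$: since $u(s)\toW z$, the concatenation inequality \eqref{e:psi3} gives, for every $\tau'>0$,
\[
\Wcost{\tau+\tau'}yz\le \Wcost\tau y{u(s)}+\Wcost{\tau'}{u(s)}z,
\]
and letting $s\downarrow t_0$ (so that $\Wcost{\tau'}{u(s)}z\to0$ by \eqref{e:convergence}) and then $\tau'\downarrow0$ yields $\liminf_{s\downarrow t_0}\Wcost\tau y{u(s)}\ge\Wcostminus\tau yz>M$. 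Equivalently, this is just the lower semicontinuity of $\Wcostname_-$ from Proposition \ref{prop:A-continuity} applied with the fixed time $\tau$.

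Finally I would pick $s>t_0$ with $s-t_0<\tau$ and close enough to $t_0$ that $\Wcost\tau y{u(s)}>M$. Since $s-t_0<\tau$, the monotonicity \eqref{e:decreasing} gives $\Wcost{s-t_0}{y}{u(s)}\ge\Wcost\tau y{u(s)}>M$, so the single term $\Wcost{s-t_0}{u(t_0)}{u(s)}$ already exceeds $M$. Evaluating the action over the partition of $[a,b]$ that contains the two consecutive nodes $t_0<s$ (inserting the endpoints $a,b$ if needed) and discarding the remaining nonnegative terms shows $\VarW uab\ge \Wcost{s-t_0}{u(t_0)}{u(s)}>M$. As $M>0$ is arbitrary, $\VarW uab=+\infty$, contradicting finiteness of the action; hence $u$ has no jump and is continuous. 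The only delicate point, and the reason the argument is not a one-line application of semicontinuity, is that the available time increment $s-t_0$ tends to $0$, where $\Wcostname_-$ is not controlled; the remedy is to first freeze a positive $\tau$, run the semicontinuity/concatenation estimate at that $\tau$, and only afterwards let $\tau\downarrow0$ to harvest the superlinear blow-up.
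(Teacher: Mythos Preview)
Your proof is correct and rests on the same mechanism as the paper's: use the monotonicity \eqref{e:decreasing} to replace a vanishing time increment by a fixed $\tau>0$, pass to the limit at the putative jump to obtain a lower bound $\Wcostminus\tau{u(t_0)}{u_\pm(t_0)}\le \VarW uab$, and then let $\tau\downarrow 0$ to invoke superlinearity. The paper's argument is packaged as a direct bound rather than a contradiction (and treats the two one-sided limits simultaneously via the partition $\{t-\eta_n,t,t+\eta_n\}$), whereas you isolate one side and route the limit step through the lower semicontinuity of $\Wcostname_-$ in Proposition~\ref{prop:A-continuity}; the content is the same.
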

\begin{proof}
We fix $t\in (a,b)$ 
(similar arguments can be carried out for $t=a$ and $t=b$), $\tau>0$, 
and a strictly positive 
vanishing sequence $(\eta_n)_n$
with $\eta_n \down 0$. For $n$ sufficiently large we have
\begin{align*}
\Wcost {\tau} {u(t{-}\eta_n)}{u(t)} + \Wcost {\tau} {u(t)}{u(t{+}\eta_n)} 
&\le 
  \Wcost {\eta_n} {u(t{-}\eta_n)}{u(t)} + \Wcost {\eta_n} {u(t)}{u(t{+}\eta_n)} \\&\le 
  \VarW uab<+\infty.
\end{align*}
Taking the limit inferior as $n\to\infty$ 
we get
\begin{equation*}
    \Wcostminus {\tau} {u_-(t)}{u(t)} + \Wcostminus {\tau} {u(t)}{u_+(t)} 
    \le \VarW uab.
\end{equation*}
We can now pass to the limit
 in the above estimate along
a vanishing sequence $(\tau_n)_n $ 
such that 
$\Wcostminus {\tau_n} {u_-(t)}{u(t)}=
\Wcost {\tau_n} {u_-(t)}{u(t)}$
and 
$\Wcostminus {\tau_n} {u(t)}{u_+(t)} 
=\Wcost {\tau_n} {u(t)}{u_+(t)} $
obtaining that 
$\Wcostsup{u_-(t)}{u(t)}+
\Wcostsup{u(t)}{u_+(t)}<\infty.
$
The local superlinearity of $\Wcostname$
then yields $u_-(t)=u(t)=u_+(t).$
\end{proof}

\section{Absolute continuity}
\label{s:5}
 We now introduce a notion of absolute continuity for curves with values in an action space. 
\begin{definition}[Absolutely continuous curves]
\label{def:AC}
We say that a curve $u:[a,b]\to \Xsp$ is $\DVTn$-absolutely continuous  if there exists $g\in L^1(a,b) $ such that 
\begin{equation}
\label{def-AC-curve}
\DVT{t{-}s}{u(s)}{u(t)} \leq \int_s^t g(r) \dd r \qquad \text{for all } a \leq s \leq t \leq b.
\end{equation}
\end{definition}
\begin{theorem}[Action density]
\label{l:surr-metr-der}
Let $u\in\AC_{\DVTn}([a,b];\Xsp)$. Then, 
the limit
\begin{equation}
\label{surr-mder}
\lim_{\sigma \down t} \frac{\DVT{\sigma{-}t}{u(t)}{u(\sigma)}}{\sigma-t} = \lim_{\sigma \up t} \frac{\DVT{t{-}\sigma}{u(\sigma)}{u(t)}}{t-\sigma}  =:
\smd ut \qquad \text{exists at almost all } t \in (a,b)
\end{equation}
and it fulfills
\begin{subequations}
\label{minimality}
\begin{align}
&
\label{minimality-a}
\DVT{t{-}s}{u(s)}{u(t)} \leq \int_s^t \smd ur \dd r \qquad \text{for all } a \leq s \leq t \leq b,
\\
&
\label{minimality-b}
\smd ut \leq g(t) \qquad \foraa\, t \in (a,b)
\end{align}
\end{subequations}
for every $g\in L^1(a,b)$ such that \eqref{def-AC-curve} holds. Therefore, $\smdn u \in L^1(a,b)$.  We shall refer to $\smdn u$ as
 the  action density for $u$. 
\end{theorem}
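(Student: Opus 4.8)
The plan is to reduce everything to the real-valued action function $\Tvar Au(t):=\VarW u at$ and to the family of equivalent metrics $\Wmetricl\lambda$, showing in the end that the sought action density coincides with $\Tvar Au'$. \emph{Step 1 (the action function and the upper bound).} Using the concatenation inequality \eqref{e:psi3} one checks that $\Tvar Au$ is nondecreasing and additive, $\VarW u st=\Tvar Au(t)-\Tvar Au(s)$ for $a\le s\le t\le b$; moreover, summing \eqref{def-AC-curve} over a partition yields $\VarW u st\le\int_s^t g$. Hence $\Tvar Au$ is a real absolutely continuous function, so $m:=\Tvar Au'$ exists a.e., lies in $L^1(a,b)$, satisfies $m\le g$ a.e., and $\VarW u st=\int_s^t m\,\dd r$. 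Since $\DVT{\sigma-t}{u(t)}{u(\sigma)}\le \VarW u t\sigma=\int_t^\sigma m$ (it is the cost of the single-interval partition), at every Lebesgue point of $m$ the right, and symmetrically the left, upper difference quotient of the action is bounded by $m(t)$.

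\emph{Step 2 (existence of the limit).} By \eqref{estimate-variations}, for every $\lambda>0$ the curve $u$ is absolutely continuous with respect to $\Wmetricl\lambda$, with $\mathrm{Var}_{\Wmetricl\lambda}(u;[s,t])\le\int_s^t(\lambda+g)$, so the classical theory of metric absolutely continuous curves \cite{AGS08} provides, for a.e.\ $t$, the metric derivative $\ell_\lambda(t):=\lim_{\sigma\to t}\Wmetricl{\lambda}(u(t),u(\sigma))/|\sigma-t|$. The estimates \eqref{e:useful-estimate} and \eqref{e:usefule-estimate2} show that, for a fixed pair and $r=|\sigma-t|$, the number $\Wmetricl{\lambda}(u(t),u(\sigma))/r$ and $\lambda$ are ordered exactly as $\DVT r{u(t)}{u(\sigma)}/r$ and $\lambda$; equivalently, the action difference quotient $\DVT r{u(t)}{u(\sigma)}/r$ is precisely the value of $\lambda$ at which the nondecreasing map $\lambda\mapsto\Wmetricl{\lambda}(u(t),u(\sigma))/r$ crosses the diagonal. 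Fixing a countable dense set of parameters and using the monotonicities \eqref{eq:metric-monotonicity}--\eqref{eq:metric-monotonicity2} (so that all $\ell_\lambda(t)$ exist simultaneously, $\ell_\lambda(t)$ is nondecreasing in $\lambda$ and $\ell_\lambda(t)/\lambda$ nonincreasing), I let $\sigma\to t$: for $\lambda$ below the crossover $\lambda^*(t):=\sup\{\lambda:\ell_\lambda(t)>\lambda\}$ one has $\ell_\lambda(t)>\lambda$, whence the action quotient exceeds $\lambda$ near $t$, while for $\lambda>\lambda^*(t)$ it is eventually $<\lambda$. This squeezes both one-sided limits in \eqref{surr-mder} to $\lambda^*(t)$, so $\smd ut:=\lambda^*(t)$ exists a.e., and by Step 1 $\smd ut\le m(t)$.

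\emph{Step 3 (identification $\smd u=m$ and minimality).} It remains to upgrade $\smd u\le m$ to equality, which yields \eqref{minimality-a} at once, since then $\DVT{t-s}{u(s)}{u(t)}\le\VarW u st=\int_s^t\smd ur\,\dd r$, and \eqref{minimality-b} because $\smd u=m\le g$. To this end I take a nested sequence of partitions $P_k$ of $[s,t]$ with vanishing mesh and consider the step functions $g_k$ equal to $\DVT{t^j-t^{j-1}}{u(t^{j-1})}{u(t^j)}/(t^j-t^{j-1})$ on each subinterval. By concatenation the sums $\int_s^t g_k=\sum_j\DVT{t^j-t^{j-1}}{u(t^{j-1})}{u(t^j)}$ increase to $\VarW u st=\int_s^t m$; the $g_k$ are uniformly integrable, being dominated by the conditional expectations of $g$ with respect to the partition $\sigma$-algebras, an $L^1$-convergent martingale; and, using the subadditivity of $\Wcostname$ together with the one-sided limits of Step 2, $\limsup_k g_k(r)\le\smd ur$ a.e. A truncated reverse-Fatou estimate combined with uniform integrability then gives $\int_s^t m=\lim_k\int_s^t g_k\le\int_s^t\smd ur\,\dd r$; since $\smd u\le m$ a.e.\ and $\int_s^t m=\VarW u st$, we conclude $\smd u=m$ a.e., and in particular $\smdn u=m\in L^1(a,b)$.

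The main obstacle lies in Steps 2--3: the elementary bound only delivers the one-sided inequality $\limsup\le m$, whereas the assertion really requires that the single-interval cost capture the full action to first order. This is exactly what the crossover transfer through the equivalent metrics $\Wmetricl\lambda$ (for the existence of the limit) and the martingale/uniform-integrability argument (for the sharp lower bound $\VarW u st\le\int_s^t\smd ur\,\dd r$) are designed to overcome.
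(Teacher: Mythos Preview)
Your route is quite different from the paper's and considerably more elaborate. The paper bypasses both the metrics $\Wmetricl\lambda$ and the action function $\Tvar Au$ entirely: it fixes $s$, sets $\ell_s(t):=\Wcost{t-s}{u(s)}{u(t)}$, observes from concatenation that $t\mapsto \ell_s(t)-\int_s^t g$ is nonincreasing, and deduces $\ell_s(t)\le \int_s^t (\ell_s')^+\le \int_s^t \mathfrak a_-$, where $\mathfrak a_-(t)$ is the \emph{liminf} of the right action quotient. Thus $\mathfrak a_-$ is itself an admissible density in \eqref{def-AC-curve}; since any admissible $g$ dominates $\mathfrak a_+:=\limsup$ at its Lebesgue points, taking $g=\mathfrak a_-$ yields $\mathfrak a_+\le \mathfrak a_-$, hence the limit exists and \eqref{minimality-a}--\eqref{minimality-b} follow at once. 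This is essentially a two-line argument once one sees the trick.

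Your Step~2 has a genuine gap. From \eqref{e:useful-estimate}--\eqref{e:usefule-estimate2} you correctly extract that $\ell_\lambda(t)>\lambda$ forces the action quotient to be eventually $>\lambda$, and $\ell_\lambda(t)<\lambda$ forces it eventually $<\lambda$. But the map $\lambda\mapsto \ell_\lambda(t)$ is only known to be continuous, nondecreasing, with $\ell_\lambda/\lambda$ nonincreasing; nothing rules out $\ell_\lambda(t)=\lambda$ on a nondegenerate interval $[\lambda^*,\lambda^{**}]$. In that case your squeeze gives only $\lambda^*\le\liminf\le\limsup\le\lambda^{**}$ and you have not established the existence of the limit. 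Step~3 then inherits this problem (it invokes $\smdn u$), and carries a second gap of its own: the claim that the nested-partition sums increase \emph{to} $\VarW u st$ is unjustified. For a fixed nested sequence $(P_k)$ with vanishing mesh the sums are increasing by concatenation, but reaching the supremum over \emph{all} partitions would require approximating arbitrary partition points by points of $\bigcup_k P_k$, which in turn needs joint continuity of $\Wcostname$ (Definition~\ref{def:superlinearity}), a hypothesis not assumed here. Without it, your reverse-Fatou step yields only $\lim_k\int g_k\le\int\smd u{}\,\dd r\le\int m$, which does not force $\smd u{}=m$.
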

\begin{proof}
The proof closely follows the argument for \cite[Prop.\ 3.2]{RMS08}.  For every fixed $s\in [a,b)$, we introduce the function 
\begin{equation}
\label{auxiliary-cost}
\ell_s : (s,b] \to [0,\infty) \qquad t \mapsto \ell_s(t):= \DVT{t-s}{u(s)}{u(t)}
\end{equation}
and observe that, by the triangle inequality \eqref{e:psi3}, there holds
\[
\left( \ell_s(t_2){-}\ell_s(t_1) \right)^+ \leq \DVT{t_2{-}t_1}{u(t_1)}{u(t_2)} \leq   \int_{t_1}^{t_2} g(r) \dd r  \qquad \text{for all }  s<t_1 <t_2\leq b\,.
\]
Therefore, the map $t\mapsto \ell_s(t)-\int_s^t g(r) \dd r $
 is non-increasing on $(s,b],$ and thus it is a.e.\ differentiable, with 
 \[
 ( \ell_s'(t))^+ \leq  \mathfrak{a}_- (t) : = \liminf_{\sigma \down t}  \frac{\DVT{\sigma{-}t}{u(t)}{u(\sigma)}}{\sigma-t}  \qquad \foraa\, t \in (s,b),
 \]
 Observe that $ \mathfrak{a}_- $ is itself a measurable function, fulfilling 
 $0 \leq   \mathfrak{a}_- (t)  \leq g(t)$ for almost all $t \in (a,b)$ (with the second inequality due to \eqref{def-AC-curve}).
  Thus, $ \mathfrak{a}_-  \in L^1(a,b)$. With the very same argument as in the proof  of  \cite[Prop.\ 3.2]{RMS08} we deduce that 
  \begin{equation}
  \label{W_+-adm}
   \DVT{t-s}{u(s)}{u(t)}  = 
\ell_s(t)  \leq \int_s^t  ( \ell_s'(r))^+ \dd r \leq \int_s^t   \mathfrak{a}_- (r) \dd r \qquad \text{for all } [s,t]\subset (a,b]\,.
  \end{equation}
  Finally, we consider the function 
  \[
   \mathfrak{a}_+  : (a,b) \to [0,\infty), \qquad t \mapsto 
  \mathfrak{a}_+ (t) := \limsup_{\sigma \down t}  \frac{\DVT{\sigma{-}t}{u(t)}{u(\sigma)}}{\sigma-t} \,
  \]
and observe that 
\begin{equation}
\label{minimality-W}
 \mathfrak{a}_+ (t) \leq g(t) \quad \text{for almost all  $t\in (a,b)$, for any function $g$ for which  \eqref{def-AC-curve} holds.}
\end{equation}  In view of \eqref{W_+-adm}, we may choose $g=  \mathfrak{a}_- $ and we thus conclude that 
\[
 \limsup_{\sigma \down t}  \frac{\DVT{\sigma{-}t}{u(t)}{u(\sigma)}}{\sigma-t}  = \mathfrak{a}_+(t) \leq  \liminf_{\sigma \down t}  \frac{\DVT{\sigma{-}t}{u(t)}{u(\sigma)}}{\sigma-t}   =  \mathfrak{a}_-(t) \qquad \foraa\, t \in (a,b)\,.
\]
Therefore, we ultimately conclude that $\lim_{\sigma \down t} \frac{\DVT{\sigma{-}t}{u(t)}{u(\sigma)}}{\sigma-t}$ exists at almost all $t\in (a,b)$, and, also in view of \eqref{W_+-adm}, 
 that it fulfills the minimality properties
\eqref{minimality}.
The proof of the assert for  $\lim_{\sigma \up t} \frac{\DVT{t{-}\sigma}{u(\sigma)}{u(t)}}{t-\sigma}$ can be trivially adapted from the analogous argument in \cite{RMS08}, to which we refer the reader for all details.
\end{proof}

\begin{theorem}
\label{l:consistency}
For every  $u \in \AC_{\DVTn}([a,b];\Xsp)$ we have 
$
\VarW uab<\infty$ and 
\begin{equation}
\label{consistency}
\VarW uab= \int_a^b \smd u t \dd t \,.
\end{equation}
\end{theorem}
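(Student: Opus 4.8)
The plan is to establish the two inequalities $\VarW uab \le \int_a^b \smd ut \dd t$ and $\VarW uab \ge \int_a^b \smd ut \dd t$, the first of which also yields finiteness. For the upper bound I would fix an arbitrary partition $(t^j)_{j=0}^M \in \mathscr{P}_f([a,b])$ and apply the minimality estimate \eqref{minimality-a} on each subinterval $[t^{j-1},t^j]$, obtaining
\[
\sum_{j=1}^M \DVT{t^j{-}t^{j-1}}{u(t^{j-1})}{u(t^j)} \le \sum_{j=1}^M \int_{t^{j-1}}^{t^j} \smd ur \dd r = \int_a^b \smd ur \dd r .
\]
Since $\smdn u \in L^1(a,b)$ by Theorem \ref{l:surr-metr-der}, taking the supremum over all partitions gives at once $\VarW uab < \infty$ and the inequality ``$\le$'' in \eqref{consistency}.

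The core of the argument is the matching lower bound, and the key tool is the action-variation function $\Tvar Au(t):=\VarW uat$ introduced in Proposition \ref{prop:regulated}. First I would observe that, thanks to the concatenation inequality \eqref{e:psi3}, inserting an extra node into a partition never decreases the corresponding sum; consequently every partition of $[a,t]$ can be refined to one passing through a fixed intermediate point $s$, and splitting such refined partitions into their restrictions to $[a,s]$ and $[s,t]$ yields the additivity
\[
\VarW uat = \VarW uas + \VarW ust \qquad \text{for } a \le s \le t \le b ,
\]
so that $\Tvar Au$ is nondecreasing with $\Tvar Au(t)-\Tvar Au(s)=\VarW ust$. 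Applying the upper-bound estimate already proved to the subinterval $[s,t]$ gives $\Tvar Au(t)-\Tvar Au(s)\le \int_s^t \smd ur \dd r$, which shows that $\Tvar Au \in \AC([a,b])$ and that $\Tvar Au'(t)\le \smd ut$ for a.e.\ $t$.

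For the reverse pointwise inequality I would use the trivial two-point partition $\{t,t+h\}$ of $[t,t+h]$, which gives
\[
\Tvar Au(t+h)-\Tvar Au(t) = \VarW u{t}{t+h} \ge \DVT{h}{u(t)}{u(t+h)} ;
\]
dividing by $h$ and letting $h\down 0$, the right-hand side tends to $\smd ut$ by the very definition \eqref{surr-mder} of the action density, whence $\Tvar Au'(t)\ge \smd ut$ at every differentiability point, i.e.\ for a.e.\ $t$. Combining the two bounds yields $\Tvar Au'(t)=\smd ut$ a.e., and the absolute continuity of $\Tvar Au$ finally gives
\[
\VarW uab = \Tvar Au(b)-\Tvar Au(a) = \int_a^b \Tvar Au'(t) \dd t = \int_a^b \smd ut \dd t ,
\]
recalling that $\Tvar Au(a)=0$.

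The only genuinely delicate step is the additivity of the action variation, which rests on the monotonicity of partition sums under refinement (a direct consequence of concatenation). Once this is in place, routing the argument through the absolutely continuous monotone function $\Tvar Au$ circumvents any direct Riemann-sum convergence argument, where the merely pointwise and non-uniform convergence in \eqref{surr-mder} would otherwise be the main obstacle.
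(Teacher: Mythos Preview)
Your proof is correct and takes a genuinely different route from the paper's. For the upper bound you proceed exactly as the authors do. For the lower bound, however, the paper does not pass through the cumulative action function $\Tvar Au$; instead it fixes the sequence of uniform partitions $\mathcal P_k$ of mesh $\tau(k)=(b-a)/k$, introduces the averaged difference quotients
\[
\Upsilon_k(t)=\frac{\Wcost{t-\utpart k(t)}{u(\utpart k(t))}{u(t)}+\Wcost{\tpart k(t)-t}{u(t)}{u(\tpart k(t))}}{\tau(k)},
\]
shows $\Upsilon_k(t)\to\smd ut$ a.e., applies Fatou to obtain $\int_a^b\smd ut\,\dd t\le\liminf_k\int_a^b\Upsilon_k\,\dd t$, and then bounds $\int_a^b\Upsilon_k\,\dd t\le\VarW uab$ by a shift-and-sum argument over the blocks of $\mathcal P_k$.

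Your approach trades that Fatou/averaging machinery for the structural fact that $\VarW u\cdot\cdot$ is additive on adjacent intervals, which you correctly derive from the refinement monotonicity implied by the concatenation inequality \eqref{e:psi3}. Once additivity is in hand, the argument reduces to showing that the monotone function $\Tvar Au$ is absolutely continuous with derivative equal to $\smd u$ a.e., which you do cleanly via the two-sided pointwise bounds. This is more elementary and closer in spirit to the classical $\BV$ theory; the paper's argument, by contrast, never invokes additivity and instead exploits the explicit uniform partitions, which makes it a bit more self-contained with respect to the structural properties of $\mathbb A$ but requires the somewhat delicate computation bounding $\int\Upsilon_k$.
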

\begin{proof}
Let us fix an arbitrary partition $  (t^j)_{j=0}^M \in \mathscr{P}_f([a,b]) $ and observe that, for every $j=1,\ldots, M$, 
\[
\DVT{t^j - t^{j-1}}{u(t^{j-1})}{u(t^j)} \leq \int_{t^{j-1}}^{t^j} \smd ur \dd r\,.
\]
Therefore, 
\[
\sum_{j=1}^M \DVT{t^j - t^{j-1}}{u(t^{j-1})}{u(t^j)} \leq \int_a^b \smd ur \dd r
\]
and, by the arbitrariness of $  (t^j)_{j=0}^M $, we conclude that 
\begin{equation}
\label{one-sided}
\VarW uab\leq  \int_a^b \smd u t \dd t.
\end{equation} 
\par
\par
In order to show the converse of  inequality \eqref{one-sided},   
let  now $(\mathcal{P}_k)_{k \in \N} \subset  \mathscr{P}_f([a,b]) $ be a sequence of uniform partitions of size 
$\tau(k):=(b-a)/k$, $\mathcal{P}_k = (t_k^j)_{j=0}^{k}$ where
$t_k^j=a+j\tau(k)$. \nc
 Let us introduce
 the piecewise constant functions
associated with $\mathcal{P}_k$
\begin{equation}
\label{interpolants-nodes}
\begin{aligned}
& \tpart {k} :[a,b]\to [a,b], 
&& \tpart {k}(a): = a, && \tpart {k}(t): = t^j_k  \ \text{ if } t \in (t_k^{j-1}, t_k^j],
\\
 & \utpart {k} : [a,b]\to [a,b], && \utpart {k}(b): = b, && \tpart {k}(t): = t^{j-1}_k \ \text{ if } t \in [t_k^{j-1}, t_k^j),
\end{aligned}
\end{equation}
and hence associate with $u$ the functions
\begin{equation}
\label{discretized-u}
\begin{aligned}
& \fpart u {k} :[a,b]\to \Xsp, &&  \fpart u {k}(t): = u(\tpart {k} (t)),
\\
& \ufpart u {k}:[a,b]\to \Xsp, &&  \ufpart u {k}(t): = u(\utpart {k} (t))\,.
\end{aligned}
\end{equation}
 We also introduce  the functions
\[
\Upsilon_{k} (t): = \frac{\displaystyle \Wcost{ t{-}\utpart {k} (t)}{\ufpart u {k} {(t)}}{u(t)} +
\Wcost{ \tpart {k} (t){-} t}{u(t)}{\fpart u {k} {(t)} }}{\displaystyle \tau(k)
}, 
\qquad t \in [a,b]\,,
\]
and observe that for every $t\not\in\mathcal P_k$
\begin{align*}
    \Upsilon_{k} (t) = 
    \alpha_k(t)
    \frac{\displaystyle \Wcost{ t{-}\utpart {k} (t)}{\ufpart u {k} {(t)}}{u(t)}}
    {t-\utpart k(t)}
    +
    \beta_k(t)
\frac{\Wcost{ \tpart {k} (t){-} t}{u(t)}{\fpart u {k} {(t)} }}{\displaystyle \tau(k)}
\end{align*}
where
\begin{equation*}
    \alpha_k(t):=\frac {t-\utpart k(t)}{\tau(k)},\quad
    \beta_k(t):=\frac{\tpart k(t)-t}{\tau(k)},\quad
    \alpha_k(t),\beta_k(t)\ge 0,\quad
    \alpha_k(t)+\beta_k(t)=1.
\end{equation*}
We have that \nc
 \[
\lim_{k \to \infty}  \Upsilon_{k} (t)
= \smd u t \qquad \foraa\, t \in (a,b).
\]
%
%
%
%
%
%
Hence,  by the Fatou Lemma we find
\[
\begin{aligned}
 \int_a^b  \smd u t  \dd t  \leq 
  \liminf_{k\to \infty}\int_a^b  \Upsilon_{k} (t)
 \dd t   .
 \end{aligned}
\]
On the other hand we observe that 
\begin{align*}
    \int_a^b  \Upsilon_{k} (t)
 \,\dd t &=
 \sum_{j=0}^{k-1} 
 \int_{t_k^{j}}^{t_k^{j+1}}
 \Upsilon_{k} (t)
 \,\dd t
 \\&=
 \sum_{j=0}^{k-1} 
 \int_{0}^{\tau(k)}
 \Upsilon_{k} (a+j\tau(k)+s)
 \,\dd s
 \\&=
 \int_{0}^{\tau(k)}
 \bigg(\sum_{j=0}^{k-1} 
 \Upsilon_{k} (a+j\tau(k)+s)
 \bigg)\,\dd s
 \\&=
 \frac1{\tau(k)}
 \int_0^{\tau(k)}
 \bigg(
 \sum_{j=0}^{k-1}
 \Wcost{ s}{u(a+j\tau(k))}{u(a+j\tau(k)+s)}
 \\&\qquad\qquad\qquad\qquad+
\Wcost{ \tau(k)-s}{u(a+j\tau(k)+s)}{u(a+(j+1)\tau(k))}
\bigg)\,\dd s
\\&\le 
\frac1{\tau(k)}
 \int_0^{\tau(k)}
 \VarW uab\,\dd s
 =\VarW uab,
\end{align*}
which finishes the proof. \end{proof}
\begin{corollary}
    If $u\in \AC_{\Wcostname}([a,b];\Xsp)$
    then 
    $u\in \AC([a,b];(\Xsp,\Wmetricl\lambda))$
    for every $\lambda>0$
    and
    \begin{equation}
        \label{eq:metric-estimate}
        |u'|_{\Wmetricl\lambda}\le 
        \lambda \lor \mathfrak a[u'].
    \end{equation}
\end{corollary}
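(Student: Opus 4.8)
The plan is to deduce both assertions from the action-density bound of Theorem~\ref{l:surr-metr-der} together with the pointwise comparison estimate~\eqref{e:useful-estimate} between $\Wmetricl\lambda$ and $\Wcostname$. Since $u\in \AC_{\Wcostname}([a,b];\Xsp)$, Theorem~\ref{l:surr-metr-der} guarantees that the action density $\smdn u\in L^1(a,b)$ is well defined and that the minimality bound~\eqref{minimality-a} holds, i.e.
\begin{equation*}
\DVT{t{-}s}{u(s)}{u(t)} \leq \int_s^t \smd ur \dd r \qquad \text{for all } a \leq s \leq t \leq b.
\end{equation*}

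First I would establish the $\Wmetricl\lambda$-absolute continuity. Applying~\eqref{e:useful-estimate} with $\tau:=t-s$, $\xv_1:=u(s)$, $\xv_2:=u(t)$ and then inserting the above bound yields, for all $a\le s\le t\le b$,
\begin{equation*}
\Wmetricl\lambda(u(s),u(t)) \le \lambda(t-s)+\DVT{t{-}s}{u(s)}{u(t)} \le \int_s^t\big(\lambda+\smd ur\big)\dd r.
\end{equation*}
Since $\lambda+\smdn u\in L^1(a,b)$ and $\Wmetricl\lambda$ is a genuine metric by Theorem~\ref{thm:metrizability}, this shows $u\in \AC([a,b];(\Xsp,\Wmetricl\lambda))$; consequently, by the classical metric theory (cf.\ \cite{AGS08}) the metric derivative $|u'|_{\Wmetricl\lambda}$ exists at almost every $t\in(a,b)$.

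For the pointwise bound~\eqref{eq:metric-estimate} I would exploit the sharper form of~\eqref{e:useful-estimate}, namely $\Wmetricl\lambda(u(s),u(t))\le \lambda(t-s)\lor \DVT{t{-}s}{u(s)}{u(t)}$. Dividing by $t-s$ and using the continuity of the map $(a,c)\mapsto a\lor c$, I would pass to the one-sided limit $s\uparrow t$; by the left-limit statement in~\eqref{surr-mder} the quotient $\DVT{t{-}s}{u(s)}{u(t)}/(t-s)$ converges to $\smd ut$ for almost every $t$, whence
\begin{equation*}
|u'|_{\Wmetricl\lambda}(t) = \lim_{s\uparrow t}\frac{\Wmetricl\lambda(u(s),u(t))}{t-s} \le \lambda \lor \smd ut \qquad \foraa\, t\in(a,b).
\end{equation*}

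This argument is essentially routine; the only point requiring a little care is the interchange of the two-sided metric derivative with the one-sided limit, which is legitimate precisely because the first step secures the almost-everywhere existence of $|u'|_{\Wmetricl\lambda}$ (so that it coincides with the left one-sided limit), while Theorem~\ref{l:surr-metr-der} secures the almost-everywhere existence of the corresponding one-sided action-density limit.
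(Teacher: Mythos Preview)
Your proof is correct and follows essentially the same route as the paper's own argument: both combine the comparison estimate~\eqref{e:useful-estimate} with the absolute-continuity bound to obtain $\Wmetricl\lambda$-absolute continuity, and then pass to the limit in the sharper $\lor$-form of~\eqref{e:useful-estimate} using~\eqref{surr-mder} to get the pointwise bound on the metric derivative. The only cosmetic difference is that the paper cites~\eqref{def-AC-curve} directly for the first step, whereas you first invoke Theorem~\ref{l:surr-metr-der} to replace the generic $g$ by $\smdn u$; either way works.
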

\begin{proof}
    Combining 
    \eqref{e:useful-estimate}
    with \eqref{def-AC-curve}
    one immediately sees that 
    $u$ is absolutely continuous with respect to $\Wmetricl\lambda.$
    We can then use \eqref{e:useful-estimate}
    and \eqref{surr-mder}
    to deduce \eqref{eq:metric-estimate}.
\end{proof}

\section{Sufficient conditions for absolute continuity}
\label{s:6}
In this section  we address 
 the converse of 
Theorem \ref{l:consistency} hold, namely
we examine the validity of the implication
\begin{equation}
\label{converse-AC}
\VarW uab<\infty \ \Rightarrow  u \in \AC_{\DVTn}([a,b];\Xsp).
\end{equation}
It is immediate to realize that, in the case of the action integral costs from Section  \ref{ss:Finsler},  the existence of an action minimizing curve 
(cf.\ Proposition \ref{prop:action-int-props}) guarantees the validity of \eqref{converse-AC}. 
\par
In a different spirit,
we  propose the following property  of  $\Wcostname$, cf.\ \eqref{eq:uniform-superlinearity} below, as a sufficient condition for     \eqref{converse-AC}. 
\begin{definition}[Uniform superlinearity]
\label{ass:ETI}
We  say that  the  action cost $\Wcostname$ on $\Xsp$
is \emph{uniformly superlinear}
if there exists   an 
 action cost  $\Gacostname$ on $\Xsp$,
a convex superlinear function
$\Psidens:[0,+\infty)\to[0,+\infty)$,
and a  constant $\lambda\ge 1$
such that 
\begin{equation}
    \label{eq:uniform-superlinearity}
    \lambda^{-1}\tau \Psidens\Big(\tau^{-1}\Gacost\tau uv\Big)\le \Wcost\tau uv\le 
    \lambda\tau \Psidens\Big(\tau^{-1}\Gacost\tau uv\Big)
\end{equation}
for every $\tau>0$, $u,v\in \Xsp.$
%
\end{definition}
\begin{remark}
\label{rmk:relation-to-1-metric}
\upshape
Definition \ref{ass:ETI} somehow mirrors the 
convex construction of Example 
\ref{ex:psi-construction}
and it 
is clearly satisfied whenever $\Psidens$ is superlinear.   In particular, action costs induced by a metric as in Example \ref{ex:psi-metric} comply with \eqref{eq:uniform-superlinearity}. 
\end{remark}
\par
We have the following result. 
\begin{theorem}
\label{thm:AC1}
If $\Wcostname$ is a uniformly superlinear
 action cost  on $\Xsp$ then 
for every $u:[a,b]\to \Xsp$ such that  $ \VarW uab<\infty $
we have $ u \in \AC_{\DVTn}([a,b];\Xsp)$.
\end{theorem}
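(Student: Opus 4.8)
The plan is to exploit the two-sided bound \eqref{eq:uniform-superlinearity} to move the problem onto the inner cost $\Gacostname$ and the convex function $\Psidens$, and then to produce an explicit $L^1$ density dominating the $\Wcostname$-increments. Fix $\Gacostname,\Psidens,\lambda$ as in Definition \ref{ass:ETI}. Since $\Gacost\tau uu=0$, the lower bound in \eqref{eq:uniform-superlinearity} forces $\lambda^{-1}\tau\Psidens(0)\le \Wcost\tau uu=0$, whence $\Psidens(0)=0$; being convex, non-negative and null at the left endpoint, $\Psidens$ is non-decreasing and continuous. Write $A(t):=\VarW u at$ for the $\Wcostname$-action and $G(t)$ for the analogously defined $\Gacostname$-action of $u$ on $[a,t]$; by the concatenation inequality both are non-decreasing and additive over adjacent subintervals. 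I aim to build $\gamma\in L^1(a,b)$ with $\Gacost{t-s}{u(s)}{u(t)}\le\int_s^t\gamma$ and $\Psidens(\gamma)\in L^1(a,b)$: then, for $a\le s\le t\le b$, the upper bound in \eqref{eq:uniform-superlinearity}, the monotonicity of $\Psidens$ and Jensen's inequality give
\[
\Wcost{t-s}{u(s)}{u(t)}\le \lambda(t-s)\Psidens\Big(\tfrac{1}{t-s}\int_s^t\gamma\Big)\le\int_s^t\lambda\Psidens(\gamma(r))\,\dd r,
\]
that is \eqref{def-AC-curve} with $g:=\lambda\Psidens(\gamma)$.

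First I would record that finite $\Wcostname$-action makes $G(b)$ finite: superlinearity provides a constant $C$ with $\Psidens(r)\ge r-C$, so $\Wcost\tau uv\ge\lambda^{-1}(\Gacost\tau uv-C\tau)$, and summing over an arbitrary partition and taking the supremum yields $G(b)\le\lambda\,\VarW uab+C(b-a)<\infty$.

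The crux, and the step I expect to be the main obstacle, is to show that $G$ is absolutely continuous; this is where superlinearity is indispensable. Suppose, for contradiction, that the Stieltjes measure $\dd G$ has a non-trivial singular part, carried by a set $N$ with $\Lebesgue(N)=0$ and $(\dd G)(N)=m_0>0$. By outer regularity I would cover $N$ by finitely many disjoint intervals $(s_k,t_k)_{k\le K}$ of total length $<\eps$ but with $\sum_k(G(t_k^-)-G(s_k^+))\ge m_0/2$, and then, refining inside each interval, extract partition pieces of lengths $\delta_{kl}$ and endpoints $w_{kl},w_{kl}'$ with $\sum_{k,l}\Gacost{\delta_{kl}}{w_{kl}}{w_{kl}'}\ge m_0/4$ while $D:=\sum_{k,l}\delta_{kl}<\eps$. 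Using $\Wcost\delta{\cdot}{\cdot}\ge\lambda^{-1}\delta\Psidens(\delta^{-1}\Gacost\delta{\cdot}{\cdot})$ and Jensen's inequality (with weights $\delta_{kl}/D$ and the convexity of $\Psidens$), the $\Wcostname$-action over these pieces is at least $\lambda^{-1}D\,\Psidens\big((m_0/4)/D\big)$; since $\Psidens$ is superlinear, $x\Psidens\big((m_0/4)/x\big)\to+\infty$ as $x\downarrow0$, so for $\eps$ small this exceeds $\VarW uab$, contradicting the boundedness of all partition sums by the finite total action. Hence $\dd G$ is purely absolutely continuous; setting $\gamma:=G'\in L^1(a,b)$ gives $\Gacost{t-s}{u(s)}{u(t)}\le G(t)-G(s)=\int_s^t\gamma$, i.e.\ $u\in\AC_{\Gacostname}([a,b];\Xsp)$.

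Finally I would verify $\Psidens(\gamma)\in L^1$ and close the argument. Applying Theorems \ref{l:surr-metr-der} and \ref{l:consistency} to the action cost $\Gacostname$ (legitimate as $u$ is now $\Gacostname$-absolutely continuous) identifies $\gamma(t)=\lim_{h\downarrow0}h^{-1}\Gacost h{u(t)}{u(t+h)}$ for a.e.\ $t$. The single-interval bound $A(t+h)-A(t)\ge\Wcost h{u(t)}{u(t+h)}\ge\lambda^{-1}h\Psidens\big(h^{-1}\Gacost h{u(t)}{u(t+h)}\big)$ together with the continuity of $\Psidens$ then yields $A'(t)\ge\lambda^{-1}\Psidens(\gamma(t))$ at a.e.\ $t$; integrating and using $\int_a^bA'\le A(b)-A(a)=\VarW uab$ for the monotone $A$ gives $\int_a^b\Psidens(\gamma)\le\lambda\,\VarW uab<\infty$. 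With $\gamma$ and $\Psidens(\gamma)$ both in $L^1$, the displayed chain in the first paragraph delivers \eqref{def-AC-curve} with $g=\lambda\Psidens(\gamma)$, proving $u\in\AC_{\DVTn}([a,b];\Xsp)$.
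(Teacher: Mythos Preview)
Your argument is correct and reaches the same conclusion as the paper, but the route to absolute continuity of the $\Gacostname$-action is genuinely different. The paper builds piecewise constant difference quotients $\Gamma_k(t)=\tau(k)^{-1}\Gacost{\tau(k)}{u(\utpart k(t))}{u(\tpart k(t))}$ on uniform partitions, uses the lower bound in \eqref{eq:uniform-superlinearity} to obtain the uniform estimate $\sup_k\int_a^b\Psidens(\Gamma_k)\le\lambda\VarW uab$, and then invokes the de la Vall\'ee--Poussin criterion to extract a weak $L^1$ limit $\Gamma$, which is thereby identified with the density of $\nu_u=\dd\Tvar Bu$. Your approach bypasses weak compactness entirely: you argue by contradiction that a singular part of $\dd G$ would force, via Jensen and superlinearity, partition sums for $\Wcostname$ to blow up on sets of arbitrarily small length, violating $\VarW uab<\infty$. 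This is more elementary (no $L^1$ weak compactness, no passage to subsequences) and makes the role of superlinearity very transparent; the paper's approximation scheme, on the other hand, is more constructive and produces the density $\Gamma$ directly as a limit, which feeds naturally into the subsequent Remark \ref{rmk:added-later} and the identification $\Gamma=\mathfrak b[u']$. Your verification that $\Psidens(\gamma)\in L^1$ via $A'(t)\ge\lambda^{-1}\Psidens(\gamma(t))$ and $\int A'\le A(b)-A(a)$ is also a clean replacement for the paper's lower-semicontinuity step $\int\Psidens(\Gamma)\le\liminf_k\int\Psidens(\Gamma_k)$. Both approaches close with the same Jensen argument for the upper bound.
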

\begin{proof}
Since $\Psidens$ is superlinear,
there exists a constant $\beta\ge0 $
such that 
$\Psidens(r)\ge r-\beta$
for all $r\in [0,+\infty)$, so that 
$\tau\Psidens(\tau^{-1}r)\ge r-\tau\beta$ and 
\begin{equation}
\label{Ga&W}
 \Gacost \tau uv 
 \le 
 \lambda \Wcost \tau uv +\tau\beta
 \qquad \text{for all } (\tau,u,v)\in (0,+\infty){\times}\Xsp{\times}\Xsp.
\end{equation}
From the above inequality it follows that 
any $u:[a,b]\to \Xsp$ 
satisfies
\[
\mathbb B(u;[a,b])\le 
\lambda \VarW uab+\beta (b-a)
\]
where $\mathbb B$ denotes the action functional induced
by $\Gacostname$.
In particular
\[ \VarW uab<\infty \quad\Rightarrow\quad
\mathbb B(u;[a,b])<\infty.\]
Therefore, the function  $\Tvar Bu :[a,b]\to [0,+\infty)$, $\Tvar Bu(t): =  \mathbb B
(u;[a,t]) $ has bounded variation. Let $\nu_{u}$ be its distributional derivative. 
  Let now $(\mathcal{P}_k)_{k \in \N} \subset  \mathscr{P}_f([a,b]) $ be the sequence of uniform partitions
considered in the proof of Theorem
\ref{l:consistency}. 
 With the same notation as in \eqref{interpolants-nodes} and \eqref{discretized-u}, 
 we consider now the piecewise constant functions
\[
\Gamma_{k} (t): = \frac{\displaystyle \Gacost{ \tpart {\mathcal{P}_k} (t){-}\utpart {\mathcal{P}_k} (t)}{\ufpart u {\mathcal{P}_k} (t)}{\fpart u {\mathcal{P}_k} {(t)} }}{\displaystyle \tpart {\mathcal{P}_k} (t){-}\utpart {\mathcal{P}_k} (t)}, \qquad t \in [a,b]\,.
\]
Then, the measures $\nu_k : =  \Gamma_{k} \mathscr{L}^1$ (where $ \mathscr{L}^1$ denotes the Lebesgue measure on $[a,b]$) weakly$^*$ converge to $\nu_{u}$. 
In turn, we observe that for every $t\in (a,b)$
\[
\begin{aligned}
\Psidens \big(\Gamma_{k} (t)\big)  & \le 
 \lambda \, \frac1{\displaystyle \tpart {\mathcal{P}_k} (t){-}\utpart {\mathcal{P}_k} (t)} \Wcost{ \tpart {\mathcal{P}_k} (t){-}\utpart {\mathcal{P}_k} (t)}{\ufpart u {\mathcal{P}_k} (t)}{\fpart u {\mathcal{P}_k} {(t)} }\,,
 \end{aligned}
\]
so that 
\begin{equation}
\label{key-estimate}
\begin{aligned}
\sup_{k\in \N} \int_a^b \Psidens \big(\Gamma_{k} (t)\big) \dd t  & \leq
\lambda \sup_{k\in \N} \int_a^b  \frac1{\displaystyle \tpart {\mathcal{P}_k} (t){-}\utpart {\mathcal{P}_k} (t)} \Wcost{ \tpart {\mathcal{P}_k} (t){-}\utpart {\mathcal{P}_k} (t)}{\ufpart u {\mathcal{P}_k} (t)}{\fpart u {\mathcal{P}_k} {(t)} } \dd t
\\
 &  \leq  \lambda \, \VarW uab<+\infty\,.
 \end{aligned}
\end{equation}
Since the convex function $\Psidens $ has superlinear growth at infinity, 
by the well known De Vall\'ee-Poussin criterion
we conclude that the bounded sequence $(\Gamma_k)_k  \subset L^1(a,b)$  admits a (non-relabeled) subsequence weakly converging to some $\Gamma \in L^1(a,b)$, so that $\nu_u = \Gamma \mathcal{L}^1$. It then follows from \eqref{key-estimate}  that 
\[
\int_a^b \Psidens(\Gamma(t)) \dd t \leq \liminf_{k\to\infty} \int_a^b \Psidens (\Gamma_{k} (t)) \dd t  \leq  \lambda \VarW uab<+\infty\,.
\]
Now we have for all $a\leq s \leq t \leq b$
\[
\begin{aligned}
\Wcost{t{-}s}{u(s)}{u(t)}  
&\le \lambda (t{-}s) \Psidens \left(\tfrac1{t{-}s} \Gacost {t{-}s} {u(s)}{u(t)} \right)  \leq  
\lambda (t{-}s) \Psidens \left(\tfrac1{t{-}s} \nu_u ([s,t]) \right) 
\\ & =   \lambda (t{-}s) \Psidens \left(\frac1{t{-}s} \int_s^t \Gamma(r) \dd r  \right) 
\stackrel{(1)}{\leq} \lambda \int_s^t \Psidens(\Gamma(r))\dd r  
 \end{aligned}
\]
where {\footnotesize (1)} due to  the  Jensen inequality. Since $\Psidens{\circ}\Gamma \in  L^1(a,b)$, we conclude that $u \in \AC_{\DVTn}([a,b];\Xsp)$ and that, in fact,
 \begin{equation}
 \label{estimate-fGamma}
 \smd ut \leq  \lambda \Psidens(\Gamma(t)) \qquad \foraa\, t \in (a,b)
 \end{equation}
(cf.\ the minimality property \eqref{minimality-b}). This finishes the proof.
\end{proof}

\begin{remark}
\label{rmk:added-later}
\upshape
Revisiting the proof of Theorem \ref{thm:AC1} we observe that, a fortiori, 
$u \in \AC_{\Gacostname}([a,b];\Xsp)$, since $u \in \AC_{\Wcostname}([a,b];\Xsp)$ and $\Wcostname$ dominates $\Gacostname$, cf.\ \eqref{Ga&W}. 
Now, we immediately check that 
\begin{equation}
    \label{double-estimate}
\lambda^{-1} \Psidens \Big(\mathfrak{b}[u'](t)
\Big)\le  \smd ut
\le \lambda \Psidens \Big(\mathfrak{b}[u'](t)
\Big)\qquad \foraa\, t \in (a,b)\,.
\end{equation}
 Indeed, \ since for all $[s,t]\subset [a,b]$ we have 
\[
\begin{aligned}
\int_s^t \Gamma(r) \dd r  &  = \lim_{k\to\infty} \int_s^t  \Gamma_k(r) \dd r
\\
 & 
 \leq \lim_{k\to \infty}  \int_s^t \left(  \frac1 {\displaystyle \tpart {\mathcal{P}_k} (r){-}\utpart {\mathcal{P}_k} (r)} \int_{\utpart {\mathcal{P}_k} (r)}^{\tpart {\mathcal{P}_k} (r)}
\mathfrak{b}[u'](\omega)  \dd \omega  \right) \dd r  =  \int_s^t \mathfrak{b}[u'](r) \dd r
\end{aligned}
\]
we conclude that  
\[
\Gamma(t) \leq \mathfrak{b}[u'](t)  \qquad \foraa\, t \in (a,b)\,,
\]
and thus 
\[
\smd ut  \le \lambda  \Psidens( \mathfrak{b}[u'](t) )  \qquad \foraa\, t \in (a,b)\,.
\]
 The other inequality in \eqref{double-estimate} 
follows by a similar argument.
\end{remark}

\subsection*{Example $1$: Cost induced by a metric}
\upshape
If $\Wcostname$ has the structure $ 
 \Wcost \tau u v = \tau \Psidens (\tfrac1\tau d(u,v))$
  for a strictly increasing convex superlinear $\Psidens$, 
 we immediately infer from 
 Theorem \ref{thm:AC1} and Remark \ref{rmk:added-later} 
  the following
 \begin{corollary}
 \label{cor:one-metric}
  For every
 $ u: [a,b]\to \Xsp $ with $  \VarW uab<\infty$ we have 
   $u\in\AC_{\DVTn}([a,b];\Xsp) \subset   \AC_{d}([a,b];\Xsp)  $ and  there holds 
  \begin{equation}
  \label{formula-metric-derivative}
  \smd ut  =  
  \psidens{}( |\mathfrak{u}'|_{\metrname{}}(t) )\qquad \foraa\, t \in (a,b)\,.
  \end{equation}
  \end{corollary}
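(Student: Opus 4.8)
The plan is to obtain the statement as a direct specialization of Theorem \ref{thm:AC1} and Remark \ref{rmk:added-later}, once we recognize that the cost $\Wcost{\tau}uv = \tau\Psidens(\tfrac1\tau\metr{}uv)$ is uniformly superlinear in the sense of Definition \ref{ass:ETI} with the \emph{canonical} choices $\Gacostname = d$ (the $\tau$-independent action cost of Example \ref{ex:trivial}), the given function $\Psidens$, and constant $\lambda = 1$. Indeed, with these choices the two-sided bound \eqref{eq:uniform-superlinearity} holds with equality, since $\tau\Psidens(\tau^{-1}\Gacost{\tau}uv) = \tau\Psidens(\tau^{-1}\metr{}uv) = \Wcost{\tau}uv$, and the convexity and superlinearity of $\Psidens$ are precisely what Definition \ref{ass:ETI} demands. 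So the only conceptual content of the corollary is the matching of the abstract action density with the classical metric derivative; everything else is transcription.

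With this observation in hand, Theorem \ref{thm:AC1} immediately yields $u \in \AC_{\DVTn}([a,b];\Xsp)$ whenever $\VarW uab < \infty$. Moreover, Remark \ref{rmk:added-later} guarantees that $u \in \AC_{\Gacostname}([a,b];\Xsp)$ and provides the two-sided bound \eqref{double-estimate}; since here $\lambda = 1$, the lower and upper bounds coincide and collapse to the pointwise identity
\[
\smd ut = \Psidens\big(\mathfrak{b}[u'](t)\big) \qquad \foraa\, t \in (a,b),
\]
where $\mathfrak{b}[u']$ denotes the action density associated with $\Gacostname$. The inclusion $\AC_{\DVTn}([a,b];\Xsp) \subset \AC_{d}([a,b];\Xsp)$ is then automatic: membership in $\AC_{\Gacostname}$ means that $\metr{}{u(s)}{u(t)} = \Gacost{t-s}{u(s)}{u(t)} \le \int_s^t g(r)\,\dd r$ for some $g \in L^1(a,b)$, which is exactly the definition of $d$-absolute continuity.

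It remains to identify $\mathfrak{b}[u']$ with the metric derivative $|\mathfrak{u}'|_{\metrname{}}$, and this is the one genuinely non-formal step, though it is essentially definitional. Since $\Gacostname = d$ is $\tau$-independent, its action functional reduces to the classical total variation $\mathrm{Var}_{d}$, and applying the action-density Theorem \ref{l:surr-metr-der} to the action cost $\Gacostname$ (which is legitimate, as $u \in \AC_{\Gacostname}$) gives
\[
\mathfrak{b}[u'](t) = \lim_{\sigma \downarrow t} \frac{\Gacost{\sigma-t}{u(t)}{u(\sigma)}}{\sigma-t} = \lim_{\sigma \downarrow t} \frac{\metr{}{u(t)}{u(\sigma)}}{\sigma-t} = |\mathfrak{u}'|_{\metrname{}}(t) \qquad \foraa\, t,
\]
the last equality being the standard fact that the right upper metric derivative agrees a.e.\ with the metric derivative along a $d$-absolutely continuous curve. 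Substituting this identity into the displayed formula for $\smd ut$ produces \eqref{formula-metric-derivative} and closes the argument. The main (indeed only) point requiring care is thus the reduction of the abstract density $\mathfrak{b}[u']$ to the familiar metric derivative; the rest is a mechanical application of Theorem \ref{thm:AC1} and Remark \ref{rmk:added-later} in the degenerate-parameter case $\lambda = 1$.
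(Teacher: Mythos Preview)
Your proof is correct and follows essentially the same route as the paper: both identify the uniform-superlinearity data as $\Gacostname = d$, the given $\Psidens$, and $\lambda = 1$, then invoke Theorem~\ref{thm:AC1} and Remark~\ref{rmk:added-later} so that the two-sided bound \eqref{double-estimate} collapses to an equality. The paper phrases the final identification as $\Gamma(t) = |\mathfrak{u}'|_{\metrname{}}(t)$ (reaching back into the proof of Theorem~\ref{thm:AC1}), whereas you equivalently identify $\mathfrak{b}[u'] = |\mathfrak{u}'|_{\metrname{}}$ via Theorem~\ref{l:surr-metr-der}; these are the same observation.
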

    \begin{proof}
  It suffices to observe that, 
     in the construction set up in the proof of Theorem \ref{thm:AC1}, we have in this case (cf.\ Remark \ref{rmk:relation-to-1-metric})
  $\Gacost \tau uv = d(u,v) $ for all $u,v\in \Xsp$, 
  $\lambda=1$  
   and $\Gamma(t) =  |\mathfrak{u}'|_{\metrname{}}(t) $ for almost all $t\in (a,b)$.
    \end{proof}   
\subsection*{Example $2$: cost induced by  two metrics}
We now address the case in which $\Xsp$ is endowed with two distances $\metrname{1}$ and $\metrname{2}$ satisfying 
 \begin{equation}
 \label{spazio-metric3} 
  \exists\, K>0\, \ \ \forall\, u, \, v \in \Xsp\: \quad \ \metr{1}uv
  \leq K \metr{2}uv\,.
  \end{equation} 
Since by the latter condition $d_2$ `dominates' $d_1$, in this case we obviously have  
\[
\AC_{\metrname{2}}([a,b];\Xsp) \subset \AC_{\metrname{1}}([a,b];\Xsp).
\]
 As in Section \ref{s:example}, we consider metric costs of the form
\begin{equation}
\label{structure-W-1+2-rewritten}
\Wcost \tau uw = \tau \psidens{1}\left( \frac{\metr{1}uw}\tau \right)+ \tau \psidens{2}\left( \frac{\metr{2}uw}\tau \right) \qquad \text{for all } (\tau,u,w) \in (0,+\infty)\times \Xsp \times \Xsp
\end{equation}
with 
$\psidens 1,\, 
\psidens 2 :  [0,+\infty) \to [0,+\infty)$
 convex, such that $0=\psidens i(0)<\psidens i(a)$  for all $a>0$ (cf.\ Example \ref{ex:psi-construction}). 
 We have the following result.
\begin{proposition}
\label{prop:AC1+2}
Assume \eqref{structure-W-1+2-rewritten} with $\psidens 2$ invertible.  Then, 
for every $u:[a,b]\to \Xsp$ such that
  $\VarW uab<\infty $
 we have  $ u \in \AC_{\DVTn}([a,b];\Xsp)$ and 
\begin{equation}
\label{identification-W-mder}
\smd ut  = \psidens{1}(\mder{1} ut )+  \psidens{2}(\mder{2} ut ) \qquad \foraa\, t \in (a,b). 
\end{equation}
\end{proposition}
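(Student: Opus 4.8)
The plan is to exploit the additive structure of the cost \eqref{structure-W-1+2-rewritten}, writing $\Wcostname = \mathsf a_1 + \mathsf a_2$ with $\mathsf a_i(\tau,u,v):=\tau\psidens i\big(\tfrac1\tau\metr i uv\big)$, and to transfer the good properties of the dominant (viscous) component $\mathsf a_2$ to the whole cost through the domination \eqref{spazio-metric3}. First I would reduce to the single metric $\metrname 2$: since $\mathsf a_2\le \Wcostname$ pointwise, the action of $u$ computed with $\mathsf a_2$ is bounded by $\VarW uab<\infty$, and since $\psidens 2$ is convex, invertible and superlinear, Corollary \ref{cor:one-metric} applies to $\mathsf a_2$ and yields $u\in\AC_{\metrname 2}([a,b];\Xsp)$ with $\mdername 2 u\in L^1(a,b)$ and $\psidens 2(\mdername 2 u)\in L^1(a,b)$. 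The inequality $\metr 1 uv\le K\metr 2 uv$ then gives $u\in\AC_{\metrname 1}([a,b];\Xsp)$ and $\mder 1 ut\le K\mder 2 ut$ for a.e.\ $t$, so that $\mdername 1 u\in L^1(a,b)$ as well.

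The crux is the integrability of $\psidens 1(\mdername 1 u)$: since $\psidens 1$ is \emph{not} assumed superlinear, this cannot be obtained from a De Vall\'ee-Poussin argument as in the proof of Theorem \ref{thm:AC1}, and instead I would recover it from the finiteness of the action, using that $u$ is already $\metrname 1$-absolutely continuous. Repeating, for the cost $\mathsf a_1$, the discretization in the proof of Theorem \ref{l:consistency} along the uniform partitions $\mathcal P_k$, one forms the piecewise constant densities $\Upsilon_k^1$ associated with $\mathsf a_1$; the same telescoping computation gives $\int_a^b\Upsilon_k^1(t)\,\dd t$ bounded by the $\mathsf a_1$-action of $u$, hence by $\VarW uab$. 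Since $u\in\AC_{\metrname 1}$, at a.e.\ $t$ the forward and backward $\metrname 1$-difference quotients of $u$ converge to $\mder 1 ut$, so that $\Upsilon_k^1(t)\to\psidens 1(\mder 1 ut)$ by continuity of $\psidens 1$; Fatou's lemma then yields $\int_a^b\psidens 1(\mder 1 ut)\,\dd t\le \VarW uab<\infty$.

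With both summands of the candidate density now in $L^1(a,b)$, I would establish absolute continuity by a Jensen estimate. Since $u\in\AC_{\metrname i}$ we have $\metr i{u(s)}{u(t)}\le\int_s^t\mder i ur\,\dd r$, and the convexity and monotonicity of $\psidens i$ give, for all $a\le s\le t\le b$,
\begin{equation*}
\Wcost{t-s}{u(s)}{u(t)}\le\int_s^t\Big(\psidens 1(\mder 1 ur)+\psidens 2(\mder 2 ur)\Big)\,\dd r,
\end{equation*}
whose integrand belongs to $L^1(a,b)$; hence $u\in\AC_{\DVTn}([a,b];\Xsp)$.

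Finally, the identification \eqref{identification-W-mder} follows pointwise. By Theorem \ref{l:surr-metr-der} the limit $\smd ut=\lim_{\sigma\downarrow t}(\sigma-t)^{-1}\Wcost{\sigma-t}{u(t)}{u(\sigma)}$ exists for a.e.\ $t$; on the other hand, at every point of metric differentiability of $u$ with respect to both $\metrname 1$ and $\metrname 2$ (a set of full measure, since $u$ is absolutely continuous for both metrics) one has $\metr i{u(t)}{u(\sigma)}/(\sigma-t)\to\mder i ut$ as $\sigma\downarrow t$, whence, by continuity of the $\psidens i$, the difference quotient converges to $\psidens 1(\mder 1 ut)+\psidens 2(\mder 2 ut)$. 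Comparing the two limits gives \eqref{identification-W-mder}. The main obstacle is the integrability step of the second paragraph, where the superlinear growth of the dominant density $\psidens 2$, propagated to the $\metrname 1$-component through \eqref{spazio-metric3}, is the decisive ingredient.
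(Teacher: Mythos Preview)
Your proof is correct and follows the same route as the paper's: dominate by $\mathsf a_2$, apply Corollary~\ref{cor:one-metric} to obtain $u\in\AC_{\metrname 2}$ (and hence $u\in\AC_{\metrname 1}$ via \eqref{spazio-metric3}), then identify the density from the pointwise limit. The paper jumps directly from $u\in\AC_{\metrname 1}\cap\AC_{\metrname 2}$ to the formula for $\smd ut$, tacitly skipping the integrability of $\psidens 1(\mdername 1 u)$ and the Jensen estimate needed to certify $u\in\AC_{\DVTn}$; your Fatou argument based on the discretization of Theorem~\ref{l:consistency}, using only $\mathsf a_1\le\Wcostname$ and the finiteness of the total action, is a clean way to close that gap given that $\psidens 1$ carries no growth hypothesis.
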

\begin{proof}
From $ \VarW uab<\infty $ we now gather, in particular,  that $  \VarWd uab<\infty $,  where $  \VarWdname $ is  the action functional associated with $\Wcostname_2(\tau,u,v) = \tau\psidens 2 (\tfrac1\tau \metr 2 uv)$. 
%
Then, by Corollary \ref{cor:one-metric} we have 
$u \in \AC_{\metrname{2}}([a,b];\Xsp)$, and hence 
$u\in \AC_{\metrname{1}}([a,b];\Xsp)$. Then,
for almost all $t\in (a,b)$
 \[
 \begin{aligned}
\smd ut  =  \lim_{\sigma \down t} \frac{\DVT{\sigma{-}t}{u(t)}{u(\sigma)}}{\sigma-t} &  =   \lim_{\sigma \down t}  \psidens{1}\left(\frac{\metr{1}{u(t)}{u(\sigma)}}{\sigma-t} \right) 
+ \lim_{\sigma \down t}  \psidens{2}\left(\frac{\metr2{u(t)}{u(\sigma)}}{\sigma-t} \right) 
\\
&
= \psidens{1}( |\mathfrak{u}'|_{\metrname{1}}(t) )+ \psidens{2}( |\mathfrak{u}'|_{\metrname{2}}(t) )\,.
\end{aligned}
 \]
This finishes the proof.
\end{proof}

\section{A dynamic interpretation of 
 action costs}
\label{s:7}
\noindent
The goal of this section is to retrieve, for the    action cost  $\Wcostname$, a dynamic interpretation akin to 
\eqref{Finsler-cost-intro} and \eqref{dvt-intro} for  action integral costs
and Dynamical-Variational Transport
costs, respectively. 
We will obtain  this   if in addition $\Wcostname$ fulfills the property from Definition  \ref{ass:7.1} below. 
 \begin{definition}
 \label{ass:7.1}
 We say that 
 the action space   $(\Xsp,\Wcostname)$ 
 has the approximate mid-point property if 
\begin{equation}
\label{middle-point-condition}
\forall\, \rho>0 \ \forall\, u,v \in \Xsp \ \forall\, 0 < \epsilon \ll 1  \ \exists\, w \in \Xsp\, : \ \ \Wcost{\tfrac \rho2}{u}{w} + \Wcost{\tfrac \rho2}{w}{v} \leq \Wcost \rho uv + \epsilon\,.
\end{equation}
\end{definition}
Condition \eqref{middle-point-condition} 
mimicks the usual approximate mid-point property
for metric spaces: 
for any $\rho\in (0,+\infty)$ and every couple of points $u,\,v\in \Xsp$ and 
for any assigned threshold $\epsilon$, we find an `intermediate' point between $u$ and $v$ such that the sum of the costs for connecting $u$ to $w$ and $w$ to $v$ over intervals of half-length $\tfrac \rho2$
does not exceed 
of $\epsilon$
the cost for connecting $u$ to $v$ over the whole interval of length $\rho$. 
\par
\begin{theorem}
\label{thm:dynamic}
Let us suppose that 
the action space $(\Xsp,\Wcostname)$
is complete and has the approximate mid-point property. 
Then, 
for all $\tau,\eta>0$ and for all $u_0,u_1 \in \Xsp$
\begin{equation}
\label{dyn-prop}
\begin{gathered}
\text{there exists } \omega: [0,\tau] \to \Xsp \text{ with } \omega (0) = u_0, \ \omega(1) = u_1 
\text{ such that } 
\\
\Wcost \tau {u_0}{u_1}  \leq  \VarW \omega 0\tau \leq \Wcost \tau {u_0}{u_1} + \eta\,.
\end{gathered}
\end{equation}
In particular 
\begin{equation}
\label{dyn-BV}
\Wcost \tau {u_0}{u_1}  = \inf\Big\{  \VarW \Theta 0\tau \, : \ \Theta: [0,\tau] \to \Xsp, \ \Theta(0) =u_0, \ \Theta(\tau) =u_1\Big\}\,. 
\end{equation}
\end{theorem}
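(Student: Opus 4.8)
The plan is to produce a near-optimal connecting curve by a dyadic approximate–midpoint construction, to complete it to a curve on all of $[0,\tau]$ using completeness, and finally to match its $\Wcostname$-action with the limit of the discrete dyadic energies; \eqref{dyn-BV} will then drop out of \eqref{dyn-prop}. Fix $\tau,\eta>0$ and $u_0,u_1\in\Xsp$, write $\rho_n:=\tau 2^{-n}$ and $t^n_k:=k\rho_n$ for $0\le k\le 2^n$. First I would define $\omega$ on the dyadic set $D:=\bigcup_n\{t^n_k\}$ inductively on the level $n$, starting from $\omega(0):=u_0$, $\omega(\tau):=u_1$. To pass from level $n$ to level $n+1$, for each interval $[t^n_k,t^n_{k+1}]$ I apply the approximate mid-point property \eqref{middle-point-condition} with $\rho=\rho_n$, $u=\omega(t^n_k)$, $v=\omega(t^n_{k+1})$ and a threshold $\eps_{n,k}>0$, calling the resulting point $\omega(t^{n+1}_{2k+1})$, so that
\[
\Wcost{\rho_{n+1}}{\omega(t^n_k)}{\omega(t^{n+1}_{2k+1})}+\Wcost{\rho_{n+1}}{\omega(t^{n+1}_{2k+1})}{\omega(t^n_{k+1})}\le \Wcost{\rho_n}{\omega(t^n_k)}{\omega(t^n_{k+1})}+\eps_{n,k}.
\]
Since \eqref{middle-point-condition} allows the thresholds to be arbitrarily small, I may impose $\sum_{k=0}^{2^n-1}\eps_{n,k}\le \eta\,2^{-(n+1)}$.

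Next I would control the discrete energies $S_n:=\sum_{k=0}^{2^n-1}\Wcost{\rho_n}{\omega(t^n_k)}{\omega(t^n_{k+1})}$, noting $S_0=\Wcost\tau{u_0}{u_1}$. Summing the mid-point estimate over $k$ gives $S_{n+1}\le S_n+\eta\,2^{-(n+1)}$, while the concatenation inequality \eqref{e:psi3} gives the monotonicity $S_n\le S_{n+1}$; hence $S_n\uparrow S_\infty\le\Wcost\tau{u_0}{u_1}+\eta$. By \eqref{e:useful-estimate} with $\lambda=1$ each $\Wmetric$-increment along a dyadic interval of length $\ell$ is at most $\ell+\Wcost{\ell}{\cdot}{\cdot}$, so the $\Wmetric$-variation of $\omega$ along any dyadic partition is bounded by $\tau+S_\infty<\infty$. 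Since $(\Xsp,\Wcostname)$ is complete, $(\Xsp,\Wmetric)$ is a complete metric space (Theorem \ref{thm:metrizability} and its completeness corollary); therefore $\omega|_D$, being of bounded $\Wmetric$-variation on the dense set $D$, admits one-sided $\Wmetric$-limits at every point and extends to a regulated curve $\omega:[0,\tau]\to\Xsp$ with $\omega(0)=u_0$ and $\omega(\tau)=u_1$, where at non-dyadic $t$ I normalise $\omega(t):=\rlim\omega t$.

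It then remains to identify $\VarW\omega 0\tau$. The lower bound in \eqref{dyn-prop} is free, since the partition $\{0,\tau\}$ already gives $\VarW\omega 0\tau\ge\Wcost\tau{u_0}{u_1}$. For the upper bound I would show $\VarW\omega 0\tau\le S_\infty$. Because refining a partition cannot decrease its cost-sum (again by \eqref{e:psi3}) and every finite subset of $D$ sits inside some uniform grid $\{t^n_k\}_k$, the supremum of the cost-sums over dyadic partitions equals $\lim_n S_n=S_\infty$. For a general partition $0=s_0<\dots<s_M=\tau$ I would approximate the interior nodes by dyadic points $\sigma^{(m)}_j\downarrow s_j$, so that $\sigma^{(m)}_j-\sigma^{(m)}_{j-1}\to s_j-s_{j-1}$ and $\omega(\sigma^{(m)}_j)\to\rlim\omega{s_j}=\omega(s_j)$; each $(\sigma^{(m)}_j)_j$ is a dyadic partition with cost-sum $\le S_\infty$, and letting $m\to\infty$ the one-sided semicontinuity of $\Wcostname_\pm$ recorded in Proposition \ref{prop:A-continuity} lets me pass to the limit and obtain $\sum_j\Wcost{s_j-s_{j-1}}{\omega(s_{j-1})}{\omega(s_j)}\le S_\infty$. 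Supremising over partitions gives $\VarW\omega 0\tau\le S_\infty\le\Wcost\tau{u_0}{u_1}+\eta$, i.e.\ \eqref{dyn-prop}. Finally \eqref{dyn-BV} follows immediately: every competitor $\Theta$ obeys $\VarW\Theta 0\tau\ge\Wcost\tau{u_0}{u_1}$ by the trivial partition, so the infimum is $\ge\Wcost\tau{u_0}{u_1}$, while letting $\eta\downarrow0$ in \eqref{dyn-prop} yields the reverse inequality.

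The hard part will be exactly this last identification, that is transferring the dyadic bound $S_\infty$ to arbitrary partitions of the \emph{extended} curve. As $\Wcostname$ is in general only one-sidedly semicontinuous in $(\tau,u,v)$ (Proposition \ref{prop:A-continuity}), the limit $m\to\infty$ naturally controls $\Wcostminus{s_j-s_{j-1}}{\omega(s_{j-1})}{\omega(s_j)}$ rather than the full increment $\Wcost{s_j-s_{j-1}}{\omega(s_{j-1})}{\omega(s_j)}$, and, when $\Wcostname$ fails to be superlinear, the regulated curve may carry jumps at points of $[0,\tau]\setminus D$. The delicate work is to arrange the dyadic approximation — using the right-limit normalisation of $\omega$, the monotonicity \eqref{e:decreasing} in $\tau$, and the fact that $\omega$ has at most countably many jumps — so that the approximating increments dominate the target ones. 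This gap disappears when $\Wcostname$ is continuous, since then $\Wcostname_-=\Wcostname=\Wcostname_+$; and in the superlinear case the continuity of finite-action curves proved earlier rules out jumps, so that $\VarW\omega 0\tau=S_\infty$ follows directly.
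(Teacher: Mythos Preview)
Your approach is essentially the paper's: a dyadic approximate-midpoint construction with summable error budget, extension from the dyadic set to $[0,\tau]$ by completeness, and then the upper bound on $\VarW\omega0\tau$ by approximating an arbitrary partition with dyadic nodes. The paper extends $\omega$ by showing directly that $(\omega_{\mathrm{diad}}(t_{\ell_h}))_h$ is Cauchy in the $\mathfrak U$-uniform structure, rather than invoking the $\Wmetric$-BV framework, but this is cosmetic.

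The subtlety you flag in the last step is real, and the paper handles it in exactly the place you anticipate trouble. The paper keeps the first argument fixed and asserts
\[
\Wcost{s_m-s_{m-1}}{\omega(s_{m-1})}{\omega(s_m)}
=\lim_{n\to\infty}\Wcost{s_m-s_{m-1}}{\omega_{\mathrm{diad}}(t_n^{m-1})}{\omega_{\mathrm{diad}}(t_n^m)},
\]
then uses monotonicity in $\tau$ to pass to the dyadic scale $2^{-(n+1)}$. But Proposition~\ref{prop:A-continuity} only yields $\Wcostname_-\le\liminf\le\limsup\le\Wcostname_+$ when the space variables converge, so this equality---and even the needed inequality $\Wcostname\le\liminf$---is justified precisely when $\Wcostname_-(\tau,\cdot,\cdot)=\Wcostname(\tau,\cdot,\cdot)$ at the relevant points, i.e.\ when $\Wcostname$ is continuous in $\tau$. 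So your diagnosis that continuity closes the gap is correct and matches what the paper implicitly uses.

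One correction: your alternative claim that local superlinearity alone fixes the issue is not right. Superlinearity makes finite-action curves $\Wtop$-continuous, but the obstruction here is the possible $\tau$-discontinuity of $\Wcostname$, not jumps of $\omega$; a continuous $\omega$ still only delivers the $\Wcostname_-$ bound via Proposition~\ref{prop:A-continuity}.
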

\begin{proof}
Without loss of generality, we may assume that $\tau=1$. Let us fix a threshold $\eta>0$: to construct the curve $\omega$ we will resort to  diadic partitions
\[
\diad n = \{ 0,\frac1{2^n}, \ldots, \frac j{2^n}, \ldots 1\}, \qquad n \in \N
\]
of the interval $[0,1]$. Indeed, we start by defining `discrete' curves defined on $\diad n$, in the following way: we pick a sequence 
 $(\overline{\eta}_n)_n$  such that 
\[
\sum_{n=0}^{\infty} 2^n\, \overline{\eta}_n =\eta 
\]
and perform the following steps:
\begin{itemize}
\item[\textbf{Step $0$:}] 
We apply \eqref{middle-point-condition} with $\rho =1  = \tfrac1{2^0}$, $u=u_0$, $v=u_1$,
and $\epsilon: = \overline{\eta}_0$,  thus finding a point $w \doteq w_{1/2}$ such that 
\[
\Wcost{\tfrac 12}{u_0}{w_{1/2}} + \Wcost{\tfrac 12}{w_{1/2}}{u_1} \leq \Wcost 1 {u_0}{u_1} + \overline{\eta}_0\,.
\]
Then, we define $\omega_0:  \diad 1 = \{ 0,\frac12, 1\} \to \Xsp$ by 
\[
\omega_0(0) : = u_0, \quad  \omega_0(\tfrac 12 ) : = w_{1/2}, \quad \omega_0(1) : = u_1\,.
\]
Clearly, we have 
\begin{equation}
\label{step0}
\Wcost{\tfrac 12}{\omega_0(0)}{\omega_0(\tfrac12)} + \Wcost{\tfrac 12}{\omega_0(\tfrac12)}{\omega_0(1)} \leq \Wcost 1 {u_0}{u_1} +  \overline{\eta}_0\,.
\end{equation}
\item[\textbf{Step $1$:}] We apply \eqref{middle-point-condition} with $\rho =\tfrac12 $, $u=\omega_0(0) =u_0$, $v=\omega_0(\tfrac12)$,
and $\epsilon: =  \overline{\eta}_1 $, thus obtaining a point $w \doteq w_{1/4}$ such that 
\[
\Wcost{\tfrac 1{2^2}}{\omega_0(0)}{w_{1/4}} + \Wcost{\tfrac 1{2^2}}{w_{1/4}}{\omega_0(\tfrac12)} \leq \Wcost {\tfrac12} {\omega_0(0)}{\omega_0(\tfrac12)} +  \overline{\eta}_1\,.
\]
We also apply 
\eqref{middle-point-condition} with $\rho =\tfrac12 $, $u=\omega_0(\tfrac12) $, $v=\omega_0(1) =u_1$,
and $\epsilon: =  \overline{\eta}_1 $, thus obtaining a point $w \doteq w_{3/4}$ such that 
\[
\Wcost{\tfrac 1{2^2}}{\omega_0(\tfrac12)}{w_{3/4}} + \Wcost{\tfrac 1{2^2}}{w_{3/4}}{\omega_0(1)} \leq \Wcost {\tfrac12} {\omega_0(\tfrac12)}{\omega_0(1)} +  \overline{\eta}_1\,.
\]
We then define 
$\omega_1:  \diad 2 = \{ 0, \frac14, \frac12, \frac34, 1\} \to \Xsp$ by 
\[
\omega_1(0) : = u_0, \quad  \omega_1(\tfrac 14 ) : = w_{1/4}, \quad \omega_1(\tfrac12) = \omega_0(\tfrac12) =w_{1/2}, \quad 
  \omega_1(\tfrac 34 ) : = w_{134},
 \quad \omega_1(1) : = u_1\,.
\]
By construction, we have 
\begin{equation}
\label{step1}
\begin{aligned}
& 
\Wcost{\tfrac 1{2^2}}{\omega_1(0)}{\omega_1(\tfrac14)} + \Wcost{\tfrac 1{2^2}}{\omega_1(\tfrac14)}{\omega_1(\tfrac12)}
+ \Wcost{\tfrac 1{2^2}}{\omega_1(\tfrac12 )}{\omega_1(\tfrac34)} +  \Wcost{\tfrac 1{2^2}}{\omega_1(\tfrac34 )}{\omega_1(1)}
\\
& 
\leq \Wcost {\tfrac12} {\omega_0(0)}{\omega_0(\tfrac12)} +  \Wcost {\tfrac12} {\omega_0(\tfrac12)}{\omega_0(1)}   + 2 \overline{\eta}_1
\\
& 
 \leq \Wcost 1 {u_0}{u_1} +  \overline{\eta}_0+2\overline{\eta}_1\,.
 \end{aligned}
\end{equation}
\item[\textbf{Step $n$:}] Let $t_{j-1} = \frac{j-1}{2^{n}}$ and $t_j  = \frac j{2^{n}}$, for $j\in \{1,\ldots, 2^{n}\}$, be  two nodes of the partition $\diad {n}$.
Clearly,
$t_{j-1} = \frac{m-2}{2^{n+1}} \in \diad {n+1}$ and $t_j = \frac{m}{2^{n+1}} \in \diad {n+1}$ with $m=2j$. Applying  \eqref{middle-point-condition} with $\rho =\tfrac1{2^n}$ and  
$\epsilon: = \overline{\eta}_n $, we find $w \doteq w_{(m-1)/2^{n+1}}$ such that 
\[
\begin{aligned}
&
\Wcost{\tfrac 1{2^{n+1}}}{\omega_{n-1}(\tfrac{m-2}{2^{n+1}})}{w_{(m-1)/2^{n+1}}}
 + \Wcost{\tfrac 1{2^{n+1}}}{w_{(m-1)/2^{n+1}}}{\omega_{n-1}(\tfrac m{2^{n+1}})}  
 \\
 & \leq \Wcost {\tfrac1{2^{n}}}{\omega_{n-1}(\tfrac{m-2}{2^{n+1}})}{\omega_{n-1}(\tfrac{m}{2^{n+1}})}
+
 \overline{\eta}_n
 \\
& 
= \Wcost {\tfrac1{2^{n}}}{\omega_{n-1}(\tfrac{j-1}{2^{n}})}{\omega_{n-1}(\tfrac{j}{2^{n}})}
+
 \overline{\eta}_n\,.
\end{aligned}
\]
Repeating this construction for every pair $(t_{j-1},t_j)$ of consecutive nodes of the partition $\diad {n+1}$, we 
define 
$\omega_n:  \diad {n+1} = \{ 0, \ldots, \frac k{2^{n+1}}, \ldots 1\} \to \Xsp$ by 
\[
\begin{cases}
\omega_n(0): = u_0, & 
\\
\omega_n \left( \frac k{2^{n+1}} \right)  := \omega_{n-1} \left( \frac j{2^{n}} \right)  & \text{if } k \text{ is even with }  k  = 2j,
\\
\omega_n \left( \frac k{2^{n+1}} \right)  : = w_{k} & \text{if } k \text{ is odd}, 
\\
\omega_n(1): = u_1
\end{cases}
\]
The function $\omega_n$ satisfies
\begin{equation}
\label{stepn}
\begin{aligned}
\sum_{k=1}^{2^{n+1}} \Wcost{\tfrac 1{2^{n+1}}}{\omega_n(\tfrac{k-1}{2^{n+1}})}{\omega_n(\tfrac{k}{2^{n+1}})} 
& \leq \sum_{j=1}^{2^n}  \Wcost {\tfrac1{2^{n}}}{\omega_{n-1}(\tfrac{j-1}{2^{n}})}{\omega_{n-1}(\tfrac{j}{2^{n}})}
+
2^n\overline{\eta}_n\,.
\\
& 
\leq  \Wcost 1{u_0}{u_1} + \sum_{j=0}^n 2^j \overline{\eta}_j
\end{aligned}
\end{equation}
\end{itemize}
Let now $\diad{\did}: = \bigcup_{n=0}^\infty \diad{n}$, and let us define $\omega_{\did}: \diad{\did} \to \Xsp$ by
\[
\omega_{\did}(t_\ell) : = \omega_n(t_\ell) \quad \text{if } t_\ell \in \diad n\,.
 \]
 It follows from our construction that $\omega_{\did}$ is well defined; from 
\eqref{stepn}, and the fact the map $\Wcost{\cdot}{u}{v}$
is non-increasing for every $u,\, v \in \Xsp$, 
we gather that 
\begin{equation}
\label{step-infty}
\sum_{\ell=0}^\infty \Wcost {\tau}{\omega_{\did}(t_\ell)}{\omega_{\did}(t_{\ell+1})} \leq \Wcost 1{u_0}{u_1} + \sum_{j=0}^\infty 2^j \overline{\eta}_j \qquad \text{for all } \tau>0
\end{equation}
\par
We are now in a position to construct the curve $\omega$ by extending $\omega_{\did}$ to $[0,1]{\setminus}\diad {\did}$
in the following way: for every $t\in [0,1]{\setminus}\diad {\did}$ we pick
the sequence $(t_{\ell_h})_h \subset \diad {\did}$ with $t_{\ell_h} \to t$ as $h\to\infty$. It follows from \eqref{step-infty}
that 
\[
\begin{aligned}
\forall\, \tau>0 \ \ \forall\, \eps>0 \ \ 
\exists\, \bar{h} \in \N \\  \forall\, k,h\geq \bar h\, : \ \ 
&  \Wcost {\tau}{\omega_{\did}(t_h)}{\omega_{\did}(t_{k})} 
\\ &  \leq 
 \sum_{\ell=h}^{k-1} \Wcost {\tau}{\omega_{\did}(t_\ell)}{\omega_{\did}(t_{\ell+1})} 
 \leq \eps\,,
 \end{aligned}
\]
namely the sequence $(\omega_{\did}(t_{\ell_h}))_h$ is Cauchy in the  $\Wtop$-uniform structure. Since $(\Xsp, \Wcostname) $ is complete, 
$(\omega_{\did}(t_{\ell_h}))_h$ admits a limit w.r.t.\ the $\Wtop$-topology. Let
\[
\omega_\infty(t):=  \Wcostname-\lim_{h\to\infty}\omega_{\did}(t_{\ell_h})\,.
\]
Therefore, we define $\omega: [0,1]\to \Xsp$ via
\begin{equation}
   \label{def-omega}
   \omega(t) : = \begin{cases}
   \omega_{\did}(t) & \text{if } t \in \diad {\did},
\\
\omega_\infty(t) & \text{if } t \in [0,1]{\setminus}\diad {\did}.
   \end{cases}
\end{equation}
\par
Now, 
let $(s_m)_{m=1}^M  \in \mathscr{P}_f([0,1])$ be an arbitrary  partition of $[0,1]$: we have that 
\[
\exists\, \bar{n} \in \N \ \  \forall\, m \in \{1,\ldots, M\} 
\ \  \forall\, n \geq n  \ \ \exists\, t_n^m \in \diad n \, : \quad 
\begin{cases}
    |s_m {-}t_n^m| \leq \frac 1{2^n},
    \\
    s_m-s_{m-1} \geq \frac 1{2^{n+1}}\,.
\end{cases}
\]
Therefore,
\[
\begin{aligned}
\Wcost{s_{m}{-}s_{m-1}}{\omega(s_{m-1})}{\omega(s_m)}
 & = \lim_{n\to\infty} \Wcost{s_{m}{-}s_{m-1}}{\omega_{\did}(t_n^{m-1})}{\omega_{\did}(t_n^m)} 
 \\
 &
\leq \liminf_{n\to\infty} 
\Wcost{\tfrac{1}{2^{n+1}}}{\omega_{n}(t_n^{m-1})}{\omega_{n}(t_n^m)}\,,
\end{aligned}
\]
where we have used that $\Wcost{\cdot}uv$ is non-increasing for all $u,v \in \Xsp$ and the fact that $\omega_{n}(t_n^{l})= \omega_{\did}(t_n^{l}) \toW \omega(s_l)$ for $l \in \{ m-1, m\}$. All in all, we have 
\[
\begin{aligned}
& 
\sum_{m=1}^M \Wcost{s_{m}{-}s_{m-1}}{\omega(s_{m-1})}{\omega(s_m)}
\\ & 
\leq \liminf_{n\to\infty} \sum_{m=1}^M \Wcost{\tfrac{1}{2^{n+1}}}{\omega_{n}(t_n^{m-1})}{\omega_{n}(t_n^m)}
\\ & 
\leq \Wcost 1{u_0}{u_1} + \sum_{j=0}^\infty 2^j \overline{\eta}_j = \Wcost 1{u_0}{u_1} + \eta\,
\end{aligned}
\]
where the second estimate follows from \eqref{step-infty}. 
By the arbitrariness of $(s_m)_{m=1}^M  \in \mathscr{P}_f([0,1])$, we ultimately conclude that 
$
\VarW {\omega} 01 \leq \Wcost 1{u_0}{u_1} + \eta\,.$
In turn, 
since $\omega(0)=u_0$ and $\omega(1) = u_1$, we obviously have 
$\Wcost 1{u_0}{u_1} \leq \VarW {\omega} 01$, and thus \eqref{dyn-prop} follows. This finishes the proof.

\end{proof}
\subsection{Existence of curves with minimal action}
Suppose now that $\Xsp$ is compact with respect to a topology $\sigma$ w.r.t.\ which $\Wcostname$ is  lower semicontinuous (cf.\ \eqref{lower-semicont}). Then, we can exploit Theorem \ref{thm:dynamic} to construct
for all $\tau >0$ and $u_0,\, u_1 \in \Xsp$
an optimal  curve $\omega_{\opt}$ for the minimum problem \eqref{dyn-BV}. 
\begin{theorem}
\label{thm:optimality-BV}
    Let $\Wcostname$ be, in addition, uniformly superlinear in the sense of  Definition \ref{ass:ETI} and suppose that $\Xsp$ is endowed with a topology $\sigma$ such that $(\Xsp,\sigma) $ is compact and $\Wcostname$ is  $\sigma$-lower semicontinuous \eqref{lower-semicont}. 
    \par
    Then, for every $\tau>0$ and $ u_0,\, u_1 \in \Xsp$ 
    there exists $\omega_{\opt}: [0,\tau] \to \Xsp$ such that 
    \begin{equation}
    \label{optimality}
    \Wcost \tau {u_0}{u_1} =\VarW {\omega_\opt} 0\tau = \min\{  \VarW \Theta 0\tau \, : \, \Theta: [0,\tau] \to \Xsp, \ \Theta(0) =u_0, \ \Theta(\tau) =u_1\}\,.
    \end{equation}
\end{theorem}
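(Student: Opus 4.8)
The plan is to argue by the direct method of the calculus of variations: Theorem~\ref{thm:dynamic} fixes the value of the infimum, compactness of $\sigma$ produces a candidate minimizer, and lower semicontinuity of the action certifies its optimality. We may assume $\tau=1$ and write $m:=\inf\{\VarW\Theta01:\Theta(0)=u_0,\ \Theta(1)=u_1\}$. Since $(\Xsp,\sigma)$ is compact and $\Wcostname_-$ is $\sigma$-lower semicontinuous, the $\sigma$-lower-semicontinuity Proposition of Section~\ref{sec:metric} ensures that $(\Xsp,\Wcostname)$ is complete; hence Theorem~\ref{thm:dynamic} applies and gives both $m=\Wcost 1{u_0}{u_1}$ and a minimizing sequence $(\Theta_n)_n$ of admissible curves with $\VarW{\Theta_n}01\le m+1$. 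By the estimate \eqref{estimate-variations} the curves $\Theta_n$ have uniformly bounded $\Wmetric$-variation.

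For the compactness step I would invoke a generalized Helly selection theorem. The metric $\Wmetric$ is $\sigma$-lower semicontinuous (again by the Proposition of Section~\ref{sec:metric}), while $(\Xsp,\sigma)$ is compact, so from the uniform bound on $\mathrm{Var}_{\Wmetric}(\Theta_n;[0,1])$ one extracts a (non-relabeled) subsequence and a limit curve $\omega_{\opt}:[0,1]\to\Xsp$ with $\Theta_n(t)\tosi\omega_{\opt}(t)$ for every $t\in[0,1]$ (see, e.g., \cite{AGS08}). In particular $\omega_{\opt}(0)=u_0$ and $\omega_{\opt}(1)=u_1$, so $\omega_{\opt}$ is admissible and $\VarW{\omega_{\opt}}01\ge m$. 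Uniform superlinearity enters here in a strengthened form: by Remark~\ref{rmk:added-later} each $\Theta_n$ is $\Gacostname$-absolutely continuous with $\int_0^1\Psidens(\mathfrak{b}[\Theta_n'](t))\,\dd t\le\lambda\,\VarW{\Theta_n}01$, so the De la Vall\'ee--Poussin criterion yields uniform integrability of the densities $\mathfrak{b}[\Theta_n']$, hence equi-absolute-continuity of the $\Theta_n$ and, via Arzel\`a--Ascoli, absolute continuity of the limit $\omega_{\opt}$.

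It then remains to prove $\VarW{\omega_{\opt}}01\le m$. Fix an arbitrary partition $(s_j)_{j=0}^M\in\mathscr P_f([0,1])$. Using the $\sigma$-lower semicontinuity of $\Wcost{s_j-s_{j-1}}\cdot\cdot$ for each fixed gap $s_j-s_{j-1}$, together with the superadditivity of $\liminf$ and the bound $\sum_j\Wcost{s_j-s_{j-1}}{\Theta_n(s_{j-1})}{\Theta_n(s_j)}\le\VarW{\Theta_n}01$, one obtains
\[
\sum_{j=1}^M\Wcost{s_j-s_{j-1}}{\omega_{\opt}(s_{j-1})}{\omega_{\opt}(s_j)}\le\liminf_{n\to\infty}\VarW{\Theta_n}01=m.
\]
Taking the supremum over partitions yields $\VarW{\omega_{\opt}}01\le m$, and combined with $\VarW{\omega_{\opt}}01\ge m=\Wcost 1{u_0}{u_1}$ this gives \eqref{optimality}.

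The delicate point is the lower-semicontinuity inequality in the last display. The hypothesis \eqref{lower-semicont} is phrased for $\Wcostname_-$, and $\Wcostname_-\le\Wcostname$, so the argument above runs verbatim only if $\Wcostname$ itself is $\sigma$-lower semicontinuous at fixed $\tau$ (the natural reading of the standing assumption). If one insists on the weaker form \eqref{lower-semicont}, the partition estimate controls only the $\Wcostname_-$-action of $\omega_{\opt}$, and upgrading this to the genuine $\Wcostname$-action is the main obstacle: it is precisely here that uniform superlinearity is indispensable, since by Theorems~\ref{l:surr-metr-der}--\ref{l:consistency} and Remark~\ref{rmk:added-later} the limit $\omega_{\opt}$ is $\Wcostname$-absolutely continuous, and the action densities computed with $\Wcostname$ and with $\Wcostname_-$ coincide almost everywhere, thanks to the monotonicity of $\tau\mapsto\Wcost\tau\cdot\cdot$ in the limit $\sigma\downarrow t$ of \eqref{surr-mder}. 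This identification restores the sharp constant $m$ and closes the argument.
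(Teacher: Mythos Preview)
Your core strategy coincides with the paper's: build a minimizing sequence via Theorem~\ref{thm:dynamic}, bound the $\Wmetric$-variation uniformly through \eqref{estimate-variations}, extract a pointwise $\sigma$-limit by a Helly-type selection principle (the paper cites \cite[Thm.~3.2]{MaiMie05EREM} rather than \cite{AGS08}), and conclude by the partition-wise lower-semicontinuity argument you wrote out. So the backbone is correct and matches the paper.

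Two points deserve comment. First, the paper's proof \emph{never invokes uniform superlinearity}: Helly selection needs only the uniform $\Wmetric$-variation bound and $\sigma$-compactness, and the lsc step needs only $\sigma$-lower semicontinuity of $\Wcostname$. Your paragraph deriving equi-absolute-continuity of the $\Theta_n$ via De~la~Vall\'ee--Poussin and then ``absolute continuity of the limit via Arzel\`a--Ascoli'' is superfluous and also somewhat muddled (Arzel\`a--Ascoli yields uniform convergence in a metric, not absolute continuity of the limit; and you have not specified which metric). It is harmless to the argument, but it should be removed.

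Second, your caution about $\Wcostname$ versus $\Wcostname_-$ is a legitimate reading of the cross-reference to \eqref{lower-semicont}, but the paper simply takes the hypothesis to be $\sigma$-lower semicontinuity of $\Wcostname$ itself and applies it directly at each partition gap. Your proposed repair---passing through the action density and arguing that the densities for $\Wcostname$ and $\Wcostname_-$ agree a.e.---is plausible but not carefully justified here, and in any case unnecessary under the intended hypothesis.
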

\begin{proof}
    Applying Thm.\ \ref{thm:dynamic} with $\eta = \eta_n = \frac1n$, $n\geq1$, we construct a sequence $(\omega^n)_n$ of finite-action curves such that 
    \[
\lim_{n\to\infty}\VarW {\omega^n} 0\tau =  \Wcost \tau {u_0}{u_1}\,.
    \]
  Due to \eqref{estimate-variations}, we have that 
  \[
\sup_n   \mathrm{Var}_{\Wmetric}(\omega^n;[0,\tau])<+\infty\,.
  \]
  Therefore, the sequence $(\omega^n)_n$ satisfy the conditions of 
  \cite[Thm.\ 3.2]{MaiMie05EREM}, and we infer that there exist
  a subsequence $(\omega^{n_k})_k$ and 
  $\omega^\infty : [0,\tau] \to \Xsp$ such that 
  \begin{equation}
  \label{sigma-convergence}
       \omega^{n_k}(t) \tosi\omega^\infty(t) \qquad \text{for every $t\in [0,T]$}
  \end{equation} 
  (where $\tosi$ indicates convergence in the $\sigma$-topology).
  We now show that 
  \begin{equation}
  \label{lsc-Var}
      \VarW {\omega^\infty} 0\tau \leq \liminf_{n\to\infty}\VarW {\omega^n} 0\tau\,.
  \end{equation}
  For this, we proceed as in the proof of Theorem \ref{thm:dynamic} and 
  fix any partition $(s_m)_{m=1}^M  \in \mathscr{P}_f([0,\tau])$. Due to \eqref{sigma-convergence} and the $\sigma$-lower semicontinuity \eqref{lower-semicont} of  $\Wcostname$, we find that for every $m=1,\ldots, M$
  \[
\Wcost{s_m{-}s_{m-1}}{\omega^\infty(s_{m-1})}{\omega^\infty(s_{m})} \leq
\liminf_{k\to\infty} \Wcost{s_m{-}s_{m-1}}{\omega^{n_k}(s_{m-1})}{\omega^{n_k}(s_{m})}\,.
  \]
  Then, \eqref{lsc-Var} follows from  adding the above estimate over 
  $m=1,\ldots, M$ and using the arbitrariness of  $(s_m)_{m=1}^M  \in \mathscr{P}_f([0,\tau])$. 
  We have thus shown that 
  \[
   \VarW {\omega^\infty}0\tau \leq  \Wcost \tau {u_0}{u_1}\,.
  \]
  On the other hand, since by \eqref{sigma-convergence}
 we have $\omega^\infty(0)=u_0$ and $\omega^\infty(\tau)=u_1$, the converse of the above inequality hold, so that 
 $  \VarW {\omega^\infty}0\tau =  \Wcost \tau {u_0}{u_1}$ and we may set $\omega_\opt: = \omega^\infty$. 
  \end{proof}

Ultimately, under the additional property from Definition \ref{ass:ETI} we obtain the existence of geodesics. 
\begin{corollary}
\label{cor:7.5}
    In addition to the assumptions of Thm.\ \ref{thm:optimality-BV}, suppose that 
    $\Wcostname$ is uniformly superlinear in the sense of  Definition \ref{ass:ETI}. Then,
    for every $\tau>0$ and $ u_0,\, u_1 \in \Xsp$ 
    there exists $\omega_\opt \in \AC_{\Wcostname}([0,\tau];\Xsp)$ such that 
    \begin{equation}
    \label{optimality-AC}
    \begin{aligned}
    \Wcost \tau {u_0}{u_1}  & = \int_0^\tau  \smd {\omega_\opt} t \dd t 
    \\
    & = \min \left\{ \int_0^\tau  \smd {\Theta} t \dd t  \, : \, \Theta \in \AC_{\Wcostname}([0,\tau]; \Xsp), \ \Theta(0) =u_0, \ \Theta(\tau) =u_1\right\}\,.
        \end{aligned}
    \end{equation}
\end{corollary}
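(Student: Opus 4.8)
The plan is to assemble three results already at our disposal: the existence of a $\sigma$-optimal finite-action curve from Theorem~\ref{thm:optimality-BV}, the absolute-continuity upgrade from Theorem~\ref{thm:AC1}, and the consistency identity from Theorem~\ref{l:consistency}. The guiding idea is that the optimal curve produced in the proof of Theorem~\ref{thm:optimality-BV} is already the desired geodesic, once we recognize that uniform superlinearity forces it to be $\Wcostname$-absolutely continuous.

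First I would apply Theorem~\ref{thm:optimality-BV} to obtain a curve, which we rename $\omega_\opt$, with $\omega_\opt(0)=u_0$, $\omega_\opt(\tau)=u_1$ and
\[
\Wcost\tau{u_0}{u_1}=\VarW{\omega_\opt}0\tau<+\infty .
\]
Since $\Wcostname$ is uniformly superlinear, Theorem~\ref{thm:AC1} applies to this finite-action curve and yields $\omega_\opt\in\AC_{\Wcostname}([0,\tau];\Xsp)$. The consistency Theorem~\ref{l:consistency} then rewrites the action as the integral of the action density, giving
\[
\int_0^\tau \smd{\omega_\opt}t\,\dd t=\VarW{\omega_\opt}0\tau=\Wcost\tau{u_0}{u_1},
\]
which is precisely the first equality in \eqref{optimality-AC}.

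It remains to verify minimality. For an arbitrary competitor $\Theta\in\AC_{\Wcostname}([0,\tau];\Xsp)$ with $\Theta(0)=u_0$, $\Theta(\tau)=u_1$, Theorem~\ref{l:consistency} again gives $\int_0^\tau\smd\Theta t\,\dd t=\VarW\Theta0\tau$, while retaining only the trivial partition $\{0,\tau\}$ in the definition \eqref{VarW-variation} of the action shows
\[
\VarW\Theta0\tau\ge\Wcost\tau{\Theta(0)}{\Theta(\tau)}=\Wcost\tau{u_0}{u_1}.
\]
Hence every admissible $\Theta$ satisfies $\int_0^\tau\smd\Theta t\,\dd t\ge\Wcost\tau{u_0}{u_1}$, and this bound is attained by $\omega_\opt$; this shows at once that the minimum in \eqref{optimality-AC} exists, is attained at $\omega_\opt$, and equals $\Wcost\tau{u_0}{u_1}$.

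The argument involves no genuine difficulty: the only delicate point is that the $\sigma$-limit curve delivered by Theorem~\ref{thm:optimality-BV} a priori carries only bounded-action (hence $\BV$-type) regularity, and it is uniform superlinearity, through Theorem~\ref{thm:AC1}, that promotes it to a $\Wcostname$-absolutely continuous curve for which the density $\smd{\omega_\opt}{\cdot}$ is well defined. Once this is in place, the remaining steps are a direct bookkeeping of the consistency identity of Theorem~\ref{l:consistency} together with the elementary one-interval lower bound for the action.
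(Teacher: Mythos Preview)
Your proof is correct and follows exactly the route indicated in the paper: the paper's proof consists of the single sentence ``It suffices to combine Theorem~\ref{thm:optimality-BV} with Theorem~\ref{thm:AC1} and Theorem~\ref{l:consistency},'' and you have simply spelled out how these three results fit together, including the (elementary) minimality check that the paper leaves implicit.
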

\begin{proof}
It suffices to combine Theorem \ref{thm:optimality-BV} with Theorem \ref{thm:AC1}
 and Theorem \ref{l:consistency}. 
 \end{proof}

{\small

\markboth{References}{References}

\bibliographystyle{alpha}

\bibliography{ricky_lit}
}

\end{document}